\newcommand{\R}{\mathbb{R}}
\newcommand{\N}{\mathbb{N}}
\newcommand{\F}{\mathcal{F}}
\newcommand{\G}{\mathcal{G}}
\newcommand{\B}{\mathcal{B}} 
\newcommand{\T}[1][]{\ifthenelse{\isempty{#1}}{\mathbb{T}}{\mathbb{T}^{#1}}}
\renewcommand{\P}{\mathbb{P}}
\newcommand{\E}{\mathbb{E}}
\DeclareMathOperator{\Id}{Id}
\DeclareMathOperator{\vspan}{span}
\newcommand{\ds}{\dif s}   
\newcommand{\dt}{\dif t}   
\newcommand{\dW}{\dif W}    
\newcommand{\du} {\dif u}
\newcommand{\dx}{\dif x}
\DeclareMathOperator{\graph}{graph}
\newcommand{\divergence}{\nabla \!\! \cdot \!}
 \newcommand{\hess}[1][{}]{\operatorname{D}^{2}_{\!#1} \!}
\newcommand{\lapl}{\Delta}
\DeclareMathOperator{\supp}{supp}
\newcommand{\esssup}[1][]{
  \ifthenelse{\isempty{#1}}
  {}
  {{#1}\text{-}}
  \operatorname{esssup}
}
\newcommand{\sprod}[3][]{
  \ifthenelse{\isempty{#1}}
  {\left\langle #2, #3 \right\rangle}
  {\left\langle #2, #3 \right\rangle_{#1}}
}
\newcommand{\stochbasis}[1][{[0,T]}]{
  \ifthenelse{\equal{#1}{\infty}}
  {(\Omega, \F, (\F_t)_{t \in [0,\infty)}, \P)}
  {(\Omega, \F, (\F_t)_{t \in #1}, \P)}
}
\newcommand{\quadvar}[1]{\left\langle\!\left\langle #1 \right\rangle\!\right\rangle}
\DeclareMathOperator{\tr}{tr}
\newcommand{\QQ}[1][\nabla u]{\mathbf{Q}(#1)}
\newcommand{\QQu}[1][t]{\QQ[\nabla u(#1)]}
\newcommand{\vv}[1][\nabla u]{\mathbf{v}(#1)}
\newcommand{\vvu}[1][t]{\vv[\nabla u(#1)]}
\newcommand{\divv}[1][\nabla u]{\divergence \left(\vv[#1] \right)}
\newcommand{\divvu}[1][t]{\divv[\nabla u(#1)]}
\theoremstyle{plain}
\newtheorem{theorem}{Theorem}[section]
\newtheorem{corollary}[theorem]{Corollary}
\newtheorem{lemma}[theorem]{Lemma}
\newtheorem{proposition}[theorem]{Proposition}
\theoremstyle{definition}
\newtheorem{remark}[theorem]{Remark}
\newtheorem{assumptions}[theorem]{Assumptions}
\newtheorem{definition}[theorem]{Definition}
\title[Stochastic mean curvature flow of graphs]{Existence of martingale solutions and large-time behavior for a stochastic mean curvature flow of graphs}
\author{Nils Dabrock}
\address[N. Dabrock]{Fakultät für Mathematik, Technische Universität Dortmund, Vogelpothsweg 87, 44227 Dortmund, Germany}
\email{nils.dabrock@math.tu-dortmund.de}
\author{Martina Hofmanov\'a}
\address[M. Hofmanov\'a]{Faculty of Mathematics, Bielefeld University, Universitätsstraße 25, 33615 Bielefeld, Germany}
\email{hofmanova@math.uni-bielefeld.de}
\author{Matthias Röger}
\address[M. Röger]{Fakultät für Mathematik, Technische Universität Dortmund, Vogelpothsweg 87, 44227 Dortmund, Germany}
\email{matthias.roeger@math.tu-dortmund.de}
\date{\today}
\begin{document}

\begin{abstract}
  We are concerned with a stochastic mean curvature flow of graphs over a periodic domain of any space dimension. We  establish existence of martingale solutions which are strong in the PDE sense and study their large-time behavior. Our analysis is based on a viscous approximation and new global bounds, namely, an $L^{\infty}_{\omega,x,t}$ estimate for the gradient and an $L^{2}_{\omega,x,t}$ bound for the Hessian. The proof makes essential use of the delicate interplay between the deterministic mean curvature part and the stochastic perturbation, which permits to show that certain gradient-dependent energies are supermartingales. Our energy bounds in particular imply that solutions become asymptotically spatially homogeneous and approach a Brownian motion perturbed by a random constant.
  
\end{abstract}

\subjclass[2010]{60H15, 60H30, 53C44}
\keywords{Stochastic mean curvature flow, strong solution, large-time behavior}

\bibliographystyle{siam}

\maketitle

\setcounter{tocdepth}{1}
\tableofcontents

\section{Introduction}
The mean curvature flow (MCF) of hypersurfaces is one key example of a geometric evolution law and is of major importance both for applications and for the mathematical theory of surface evolution equations, see for example \cite{Zhu2002}, \cite{Ecker2004}, \cite{Mantegazza2011} or \cite{Bellettini2013} and the references therein.

Given a family $(\Gamma(t))_{t>0}$ of smooth $n$-dimensional hypersurfaces in $\R^{n+1}$ mean curvature motion is characterized by the evolution law
\begin{equation*}
    V(x,t) = H(x,t)\quad\text{ for }t>0,\,x\in \Gamma(t)
\end{equation*}
where $V$ describes the velocity in direction of a fixed smooth normal field $\nu$ and $H$ denotes the mean curvature with respect to the same normal field (in our notation $H$ is given by the sum of the principle curvatures).

The motion by mean curvature has attracted much attention. It is the simplest gradient flow dynamic of the surface area energy, that is a relevant energy in numerous applications. There are several analogies to the heat equation, as can be seen in the distance function formulation of MCF (see for example \cite{Bellettini2013}) or the approximation by mean curvature flow for nearly flat graphs. One of the consequences is that a comparison (or inclusion) principle holds and that convexity is conserved. On the other hand, MCF is a nonlinear evolution, governed by a degenerate quasilinear elliptic operator. This in particular leads to the possibility that singularities appear in finite time and that the topology changes. For example, balls shrink in finite time to points and for certain dumbbell type initial shapes a pinch-off of components happens.
Such challenges have been the origin and motivation for several important developments in geometric analysis, starting with the pioneering work of Brakke \cite{Brakke1978} on geometric measure theory approaches, level set methods as developed by Evans and Spruck \cite{Evans1991,Evans1992,Evans1992a,Evans1995} and Chen, Giga, Goto \cite{Chen1991}, De Giorgi's barrier method \cite{Bellettini1995,Bellettini1997} or time discrete approximations as introduced by Luckhaus and Sturzenhecker \cite{Luckhaus1995} and Almgren, Taylor and Wang \cite{Almgren1993}.

The formation of singularities on the other hand can be excluded in particular situations such as the evolution of entire graphs, where solutions exist globally in time \cite{Ecker1991} or for initial data given by compact, smooth and convex hypersurfaces \cite{Huisken1990}. In the latter case the surfaces become round and shrink to a point in finite time.

Several of the techniques developed for mean curvature flow have been successfully applied to deterministic perturbations of the flow \cite{Chen1991,Barles1993,Barles1998,Chambolle2008,Mugnai2011} that are present in a number of applications. A random forcing was included to mean curvature flow in \cite{Kawasaki1982} to account for thermal fluctuations.
In this paper we study a particular stochastic perturbation in the case of hypersurfaces given as graphs over the $n$-dimensional flat torus. To motivate the equation let us start from the general case of a random evolution $(\Gamma(t))_{t>0}$ of surfaces in $\R^{n+1}$ that are given by immersions $\phi_t:\Gamma\to\R^{n+1}$ of a fixed smooth manifold $\Gamma$. We then consider a real-valued Wiener process $W$ defined on some probability space $(\Omega,\mathcal{F},\P)$ and the stochastic differential equation
\begin{equation}
 \dif\phi_t(x)  =  \nu(x,t) \bigl(H(x,t) \dif t + \circ\dif W(t)\bigr),  \label{normalmodel}
\end{equation}
which is possibly the simplest stochastic perturbation, by a one dimensional white noise acting
uniformly in all points of the surface in normal direction.

If we further restrict ourselves to the case of graphs over the flat torus $\T^n$ (represented by the unit cube and periodic boundary conditions), that is,
\begin{equation*}
  \Gamma(\omega, t) = \graph u(\omega, \cdot, t) = \{ (x, u(\omega, x, t)) \in \R^{n+1} \mid x \in \T^n \}
\end{equation*}
for a (random) function $u: \Omega \times \T^n \times (0,\infty) \to \R$, we are lead to the following Stratonovich differential equation
\begin{align}
  \label{eq:smcf}
  \begin{split}
    \du &= \QQ \divv \dt + \QQ \circ \dW
  \end{split}
\end{align}
where $\QQ$ denotes the area element and $\vv$ the horizontal projection of the normal to the graph
\begin{align}
  \QQ[p] &\coloneqq \sqrt{1+|p|^2} \\
  \vv[p] &\coloneqq \frac{p}{\sqrt{1+|p|^2}}, ~p\in \R^n.
\end{align}

The choice of the Stratonovich instead of an It\^o differential in \eqref{eq:smcf} is necessary to keep the geometric character of the equation, see the discussion in \cite{Lions2002}.
Despite  its origin from a rather simple stochastic forcing, the evolution equation for the graphs presents severe difficulties. In particular, the presence of a multiplicative noise with nonlinear gradient dependence in combination with the degeneracy in the quasilinear elliptic term are challenges for a rigorous analysis and it is at first place not clear whether or not solutions stay graphs.

The deterministic mean curvature flow for graphs was considered in \cite{Ecker1989}, where an a priori gradient bound was proved and the long-time behavior was analyzed, see \cite{Huisken1989} for graphs over a given domain with vertical contact angle.
Lions and Souganidis presented a general well-posedness theory and introduced a notion of stochastic viscosity solutions \cite{Lions1998,Lions1998a,Lions2000a,Lions2000} for geometric equations of mean curvature flow type (and beyond), but 
no regularity properties other than continuity are obtained for the solutions. The evolution \eqref{normalmodel} for the case $n=1$ was investigated by Souganidis and Yip \cite{Souganidis2004} and Dirr, Luckhaus and Novaga \cite{Dirr2001}, where a stochastic selection principle was identified in situations where non-uniqueness appears for the deterministic flow. In \cite{Dirr2001} also an existence result was proved, but only for short time intervals determined by a random variable that is not necessarily bounded from below. Other (formal) approximations of stochastically perturbed mean curvature flow equations have been studied, such as a time discrete scheme in \cite{Yip1998} and stochastic Allen--Cahn equations in \cite{Funaki1995,Roeger2013,Weber2010,Bertini2017, Bertini2017a}.

The Stratonovich differential equation \eqref{eq:smcf} was already considered in the case $n=1$ in \cite{Es-Sarhir2012} and, mainly for $n=2$ in \cite{Hofmanova2017}, by von~Renesse and the second and third author. The present paper continues and extends these results in several respects. The most important new contribution is a uniform (i.e.~$L^\infty$ with respect to all the three variables $\omega, x, t$) gradient bound for $u$ and an $L^2$-bound for the Hessian in arbitrary dimensions. This is a major improvement compared to \cite{Hofmanova2017} where only $H^1$-estimates for $u$ and an $L^2$-estimate for the mean curvature were shown. Our gradient bound in particular shows that a solution stays a graph for all times. More precisely, Lipschitz continuity of the initial condition is preserved during the evolution.
As a consequence of our improved bounds we are able to prove the existence of martingale solutions that are strong in the PDE sense for any space dimension. In contrast, in \cite{Hofmanova2017} the existence result was restricted to two dimensions and the solutions were only weak in the PDE sense.

Our proof of the gradient bound uses a Bernstein type argument \cite[Section 14.1]{Gilbarg1977}  but in a context of energy methods, which seems to be new even for deterministic mean curvature flow equations. In the deterministic case this argument reduces to an argument which is similar to the way the gradient bounds in \cite{Ecker1989} are derived from Huisken's weighted monotonicity formula, but instead of the backward heat kernel a constant kernel is used.

Especially the $L^{\infty}$-gradient bound  and in particular its uniformity with respect to the randomness variable $\omega$ may appear somewhat surprising in the field of SPDEs. It is a consequence of the geometrical nature of the model and more precisely of the fact that the structure of the noise respects the  underlying deterministic evolution. This is reflected through our energy-type estimates: by exploring the precise structure of all the involved quantities we are able to group them in such a way that each term can be shown to be non-positive and additionally yields a control of second derivatives. The identification of the non-positive terms makes use of the interplay between the deterministic mean curvature part of the equation and the stochastic perturbation.

Moreover, we are also able to study the large-time behavior of solutions and prove that solutions become homogeneous in space and asymptotically only deviate from a constant value by a Wiener process. This result improves the results of \cite{Es-Sarhir2012} by obtaining a stronger convergence and extending it to arbitrary dimensions.

In contrast to \cite{Hofmanova2017} we will use the abstract theory of variational SPDEs \cite{Prevot2007} to handle equation \eqref{eq:smcf}. Although \eqref{eq:smcf} itself only has a variational structure for $n = 1$, which was exploited in \cite{Es-Sarhir2012}, the gradient of a solution will indeed solve a variational SPDE for arbitrary dimensions. Since \eqref{eq:smcf} lacks coercivity we will approximate it for $\varepsilon > 0$ by
\begin{align}
  \label{eq:smcf-viscous}
  \begin{split}
    \du &= \varepsilon \lapl u + \QQ \divv \dt + \QQ \circ \dW,
  \end{split}
\end{align}
which is coercive in an appropriate sense. We will call \eqref{eq:smcf-viscous} the viscous equation.

Since the viscous equation is not covered by the classical theory for variational SPDEs \cite{Prevot2007, Gawarecki2011}, we  include an It\^o formula and an abstract existence result for a large class of equations in \Cref{sec:variational-spdes}. These results, which hold independent interest themselves, are generalizations of the results from the pioneering works of Pardoux \cite{Pardoux1975} and Viot \cite{Viot1976}.

For a precise formulation of our main results and an overview over the main techniques of the proofs see \Cref{sec:results} below.

We note that under our assumptions on the initial condition the stochastic viscosity theory \`a la Lions, Souganidis  \cite{Lions1998,Lions1998a,Lions2000a,Lions2000} yields the existence of a unique viscosity solution. Proving the coincidence of our solution with the viscosity solution seems a major challenge and out of reach at the moment. Comparing the two notions, our solutions have better regularity properties implying not only space-time H\"older continuity but in addition $L^2$-regularity of second order derivatives in space. In particular the mean curvature operator is well-defined in a pointwise a.e. sense. Furthermore, we are able to characterize the large-time behavior. On the other hand, proving uniqueness for our solutions (which is necessary and most likely also a major tool to obtain the  equivalence of the concepts) remains open. For our solutions an energy based approach to uniqueness seems most appropriate but to require even higher regularity of solutions and a control of the evolution of quantities like the normal vectors or the surface area measure.

This paper is organized as follows: After explaining the notation in \Cref{sec:notation} we present our results in \Cref{sec:results}. Existence of solutions of the viscous equation will be established in \Cref{sec:existence-viscous}. In \Cref{sec:a-priori-estimates} we prove similarly to Huisken's monotonicity formula that certain energies are non-increasing uniformly in $\varepsilon$. We apply this to deduce uniform $H^2$ and uniform $L^\infty$ gradient bounds for solutions of the viscous equation. In \Cref{sec:vanishing-viscosity} we  prove that solutions of \eqref{eq:smcf-viscous} converge to a solution of \eqref{eq:smcf}, which in particular proves that there exists a solution. The large-time behavior of a solution is analyzed in \Cref{sec:large-time-behavior}.

We present the theory of variational SPDEs in spaces with compact embedding in \Cref{sec:variational-spdes}.


\section{Notation}\label{sec:notation}
In this section we introduce the basic notation used throughout the paper.

\subsection*{Hilbert-Schmidt operators}
  Let $U, H$ be two separable Hilbert spaces and $(g_k)_{k}$ an orthonormal basis of $U$. With $L_2(U;H)$ we will denote the space of all Hilbert-Schmidt operators $T : U \to H$ with the norm $\| T \|_{L_2(U;H)}^2 \coloneqq \sum_{k} \| T g_k \|_{H}^2$, which is independent of the choice of the orthonormal basis.

  With $L_1(H)$ we will denote the space of all nuclear operators $T : H \to H$ with the norm
  \begin{align*}
    \| T \|_{L_1(H)} \coloneqq \inf \left\{ \sum_{k=1}^\infty \| a_k \|_H \| \varphi_k \|_H \mid (a_k)_k \subset H,\, (\varphi_k)_k \subset H', \, T = \sum_{k=1}^\infty a_k \varphi_k \right\}.
  \end{align*}
  It is well known that $(L_1(H))^\ast = L(H)$ and that the weak-$\ast$ topology on $L(H)$ coincides on norm bounded subsets with the weak operator topology on $L(H)$, which is the weakest topology such that for all $x,y \in H$ the map $L(H) \to \R, T \mapsto \sprod[H]{Tx}{y}$ is continuous.

  Furthermore, for a Banach space $E$ we will use the notation $(E,w)$ resp. $(E', w^\ast)$ to denote the space $E$ with the weak topology resp. the dual space $E'$ with the weak-$\ast$ topology.

\subsection*{Stochastic processes}
  For an interval $I = [0,T]$ with $T > 0$ or $I = [0,\infty)$, a stochastic basis $(\Omega, \F, (\F_t)_{t \in I}, \P)$ consists of a probability space $(\Omega; \F, \P)$ together with a filtration $(\F_t)_{t \in I}$. According to \cite{DaPrato2014} the filtration $(\F_t)_t$ will be called a normal filtration, if
  \begin{itemize}
  \item $A \in \F_0$ for all $A \in \F$ with $\P(A) = 0$ and
  \item for all $t \in I$ with $t < \sup I$ we have that
    \begin{align*}
      \F_t = \bigcap_{s > t} \F_s.
    \end{align*}
  \end{itemize}
  A Wiener process $W = (W_t)_{t \in I}$ with respect to the probability space $(\Omega, \F, \P)$ is called a $(\F_t)_{t \in I}$-Wiener process if
  \begin{itemize}
  \item $W_t$ is $\F_t$ measurable for all $t \in I$ and
  \item $W_t - W_s$ is independent of $\F_s$ for all $s, t \in I$ with $s < t$.
  \end{itemize}
  For such an $(\F_t)_t$-Wiener process $W$ on a separable Hilbert space $U$ with covariance operator $Q \in L(U)$, that we always assume to be positive definite, one can define the space $U_0 \coloneqq Q^{\frac{1}{2}}(U)$ with the induced scalar product $\sprod[U_0]{x}{y} \coloneqq \sprod[U]{Q^{-\frac{1}{2}}x}{Q^{-\frac{1}{2}}y}$. If $H$ is another separable Hilbert space and $\Phi$ is a predictable $L_2^0 \coloneqq L_2(U_0;H)$-valued process with
  \begin{align*}
    \P\left( \int_0^T \| \Phi(t) \|_{L_2^0}^2 \dt < \infty \right) = 1,
  \end{align*}
  then the stochastic It\^o integral
  \begin{align*}
    \int_0^t \Phi(s) \dW_s, ~t \in [0,T]
  \end{align*}
  is a well-defined local martingale with values in $H$.
  
\subsection*{Stratonovich integral}
  \label{rem:stratonovich}
  In the situation above, it is sometimes more natural to consider the stochastic Stratonovich integral
  \begin{align*}
    \int_0^t \Phi(s) \circ \dW_s, ~t \in [0,T],
  \end{align*}
  which, however,  might not be well-defined.

  If at least formally one has the evolution law
  \begin{align*}
    \dif \Phi = \mu \dt + \sigma \circ \dW,
  \end{align*}
  with an $L_2^0$-valued process $\mu$ and an $L_2(U_0; L_2^0) = L_2(U_0 \times U_0 ; H)$-valued process $\sigma$,
  then formally one has
  \begin{align}
    \label{eq:stratonovich}
    \int_0^t \Phi(s) \circ \dW_s = \int_0^t \Phi(s) \dW_s + \frac{1}{2} \int_0^t \left[ \sum_{k} \left(\sigma(s)Q^{\frac{1}{2}}g_k\right)Q^{\frac{1}{2}}g_k \right] \ds
  \end{align}
  for all $t \in [0,T]$, with $(g_k)_k$ an orthonormal basis of $U$. The value on the right hand side does not depend on the choice of $(g_k)_k$.

  Whenever the right hand side of \eqref{eq:stratonovich} is well-defined, we can think of it as the definition for the Stratonovich integral on the left hand side of \eqref{eq:stratonovich}.

  We will call
  \begin{align*}
    \frac{1}{2}  \sum_{k} \left(\sigma(t) Q^{\frac{1}{2}}g_k\right)Q^{\frac{1}{2}}g_k
  \end{align*}
  the It\^o-Stratonovich correction term.

\subsection*{Periodic Sobolev spaces}
  For $k \ge 0$, $p \in [1,\infty]$ we will denote with $W^{k,p}(\T^n)$ the space of periodic Sobolev functions on the flat torus $\T^n$, which can be identified with the completion of the space of $[0,1]^n$ periodic $C^\infty(\R^n)$ functions with respect to the $\| \cdot \|_{W^{k,p}([0,1]^n)}$ norm.

\subsection*{Matrix scalar product}
  For matrices $A,B,C,D \in \R^{n\times n}$ we will write
  \begin{align*}
    A:B \coloneqq \sum_{i,j=1}^n A_{ij} B_{ij}.
  \end{align*}
  We will use the convention that
  \begin{align*}
    AB : CD \coloneqq (AB) : (CD) = \sum_{i,j,k,l=1}^n A_{ij}B_{jk}C_{il}D_{lk}.
  \end{align*}

\section{Results}\label{sec:results}
In this section we will state the main results of this paper. The proofs are given in the subsequent sections.
We will first formulate our solution concept. We are concerned with solutions that are strong in the PDE sense, that is, an integral form of \eqref{eq:smcf} is satisfied pointwise. In addition, they may be either strong or weak in the probabilistic sense, depending on whether the underlying probabilistic elements are given in advance or not.

\begin{definition}\ 
  \label{defn:solution}
  \begin{enumerate}[(i)]
  \item \label{defn:solution_i} Let $I = [0,\infty)$ or $I = [0,T]$ with $T > 0$, $\stochbasis[I]$ be a stochastic basis with a normal filtration together with a real-valued $(\F_t)$-Wiener process $W$ and $u_0 \in L^2(\Omega; H^1(\T^n))$ be $\F_0$-measurable.  A predictable $H^2(\T^n)$-valued process $u$ with $u \in L^2(\Omega; L^2(0,t;H^2(\T^n)))$ for all $t \in I$ is a strong solution of \eqref{eq:smcf} with initial data $u_0$, if
    \begin{align*}
      u(t) - u_0 &= \int_0^t \QQ[\nabla u(s)] \divvu[s] \ds \\
                 &\phantom{{}={}} + \int_0^t \QQ[\nabla u(s)] \circ \dW_s \quad \P\text{-a.s.} \text{ in } L^2(\T^n) ~\forall t \in I.
    \end{align*}
  \item Let $\Lambda$ be a Borel probability measure on $H^1(\T^n)$ with bounded second moments $\int_{H^1(\T^n)} \| z \|_{H^1(\T^n)}^2 \dif \Lambda(z) < \infty$.  A martingale solution of \eqref{eq:smcf} with initial data $\Lambda$ is given by $\stochbasis[I]$ together with $W$, $u_0$ and $u$ such that \ref{defn:solution_i} is satisfied and $\P \circ u_0^{-1} = \Lambda$.
  \end{enumerate}
  In the same way we can define strong solutions and martingale solutions for \eqref{eq:smcf-viscous}.
\end{definition}

In the following we will often just write that $u$ is a strong solution instead of specifying that $u$ is a strong solution for a time interval $I$ with respect to a stochastic basis with a normal filtration and a real-valued Wiener process. If not otherwise specified the stochastic basis will be denoted by $\stochbasis[I]$ and the Wiener process by $W$.

\begin{remark}
  Note that formally for a solution $u$ of \eqref{eq:smcf} one can use the chain rule, which holds true for the Stratonovich integral, to deduce that
  \begin{align*}
    \dif \left( \QQ \right) = \vv \cdot \nabla \left( \QQ \divv \right) \dt + \vv \cdot \nabla \left( \QQ \right) \circ \dW.
  \end{align*}

  Hence, according to \Cref{rem:stratonovich} the It\^o-Stratonovich correction for the integral in \Cref{defn:solution} is given by
  \begin{align*}
    \frac{1}{2} \vv \cdot \nabla \left( \QQ \right) = \frac{1}{2} \vv \cdot \hess u \vv
  \end{align*}
  and the Stratonovich integral in \Cref{defn:solution} has to be understood in the sense that
  \begin{align*}
    \int_0^t \QQu[s] \circ \dW_s &\coloneqq \int_0^t \QQu[s] \dW_s \\
                                 &\quad+ \frac{1}{2} \int_0^t \vvu[s] \cdot \hess u(s) \vvu[s] \ds,
  \end{align*}
  such that the equation in \Cref{defn:solution} becomes
  \begin{align*}
    u(t) - u_0 &= \int_0^t \left[\QQu[s] \divvu[s] + \frac{1}{2} \vvu[s] \cdot \hess u(s) \vvu[s] \right] \ds \\
               &\phantom{{}={} } + \int_0^t \QQu[s] \dW_s \\
               &= \int_0^t \left[\lapl u(s) - \frac{1}{2} \vvu[s] \cdot \hess u(s) \vvu[s] \right] \ds\\
               &\quad + \int_0^t \QQu[s] \dW_s.
  \end{align*}
\end{remark}

\begin{remark}
  Note that for a strong  solution  the stochastic basis and the Wiener process are prescribed, whereas for a martingale solution, i.e. probabilistically weak solution, the stochastic basis and the Wiener process are part of the solution. Once this stochastic basis and the corresponding Wiener process are found, the martingale solution is a strong solution with respect to this particular choice of stochastic basis and Wiener process.
\end{remark}

\begin{remark}
  From \Cref{cor:variational-sobolev-ito} we infer that a strong solution of \eqref{eq:smcf} or \eqref{eq:smcf-viscous} has a modification with continuous paths in $H^1(\T^n)$ and $u \in L^2(\Omega; C([0,t]; H^1(\T^n)))$ for all $t \in I$. 
  Furthermore, under suitable assumptions on the initial data we deduce that $u \in C([0,t]; C(\T^n))$ $\P$-a.s. for all $t \in I$, see \Cref{rem:regularity-of-solution} below.
\end{remark}

We are now ready to state the main result of the present paper.
\begin{theorem}[Existence of martingale solutions]
  \label{thm:main}
  Let $\Lambda$ be a Borel probability measure on $H^1(\T^n)$ with bounded second moments and additionally
  \begin{align*}
    \supp \Lambda \subset \{ z \in H^1(\T^n) \mid \| \nabla z \|_{L^\infty(\T^n)} \le L \}
  \end{align*}
  for some constant $L > 0$.

  Then for $I = [0,\infty)$ there is a martingale solution of \eqref{eq:smcf} with initial data $\Lambda$. For all such solutions it holds that $\hess u \in L^2(\Omega; L^2(0,\infty;L^2(\T^n)))$ and
  \begin{align*}
    \| \nabla u \|_{L^\infty(0,\infty; L^\infty(\T^n))} \le L \quad \text{a.s.}
  \end{align*}
\end{theorem}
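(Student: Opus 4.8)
The plan is to construct the solution by vanishing viscosity, deriving both global bounds already for the regularized problem \eqref{eq:smcf-viscous}, uniformly in the viscosity parameter $\varepsilon$. First, for fixed $\varepsilon>0$ the equation \eqref{eq:smcf-viscous}, rewritten in It\^o form by adding the correction $\tfrac12\vv\cdot\hess u\,\vv$, is a quasilinear It\^o SPDE whose leading term $\varepsilon\lapl u$ provides coercivity of $H^1(\T^n)$ over $L^2(\T^n)$. Using the existence theory for variational SPDEs with compact embeddings of \Cref{sec:variational-spdes} together with the It\^o formula of \Cref{cor:variational-sobolev-ito}, I would obtain for every $\varepsilon>0$ and every $T<\infty$ a martingale solution $u_\varepsilon$ of \eqref{eq:smcf-viscous} on $[0,T]$ with initial law $\Lambda$, predictable, with $u_\varepsilon\in L^2(\Omega;L^2(0,T;H^2(\T^n)))$ and continuous paths in $H^1(\T^n)$. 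A further approximation argument (of the initial data and/or of the equation) can be used to justify the computations below at the required regularity and is removed at the end.

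The core of the proof is a family of uniform energy estimates. Differentiating \eqref{eq:smcf-viscous} in space shows that $\grad u_\varepsilon$ solves a variational SPDE, so one may test with nonlinear functions of the gradient: for smooth convex nondecreasing $\psi\colon[0,\infty)\to[0,\infty)$ I would study
\begin{align*}
  E_\psi(t) \;\coloneqq\; \int_{\T^n} \psi\bigl(\QQ[\grad u_\varepsilon(t)]\bigr)\,\dx .
\end{align*}
Applying the It\^o formula and carefully regrouping the resulting terms --- the mean-curvature drift $\QQ\divv$, the viscous term $\varepsilon\lapl u_\varepsilon$, the It\^o--Stratonovich correction $\tfrac12\vv\cdot\hess u_\varepsilon\,\vv$, and the quadratic variation generated by the noise $\QQ\,\dW$ --- the decisive point is that, because the structure of the noise respects the deterministic mean curvature evolution, these combine into a \emph{nonpositive} dissipation functional plus a martingale, so that
\begin{align*}
  \E\,E_\psi(t) + \E\int_0^t \mathcal D_\psi(s)\,\ds \;\le\; \E\,E_\psi(0), \qquad \mathcal D_\psi \ge 0,
\end{align*}
uniformly in $\varepsilon$ and $T$, where moreover $\mathcal D_\psi$ dominates $\int_{\T^n}|\hess u_\varepsilon|^2\,\dx$ (up to an additional nonnegative $\varepsilon$-weighted term). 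Choosing $\psi$ to approximate $s\mapsto(s-\sqrt{1+L^2})_+$ and using that $\QQ[\grad u_\varepsilon(0)]\le\sqrt{1+L^2}$ $\P$-a.s.\ (which holds since $\supp\Lambda$ lies in the stated Lipschitz ball) forces $E_\psi(t)=0$, hence $|\grad u_\varepsilon(t)|\le L$ a.s.\ for every $t$; continuity of paths in $H^1(\T^n)$, upgraded to $C(\T^n)$ by parabolic bootstrap, turns this into $\|\grad u_\varepsilon\|_{L^\infty(0,T;L^\infty(\T^n))}\le L$ a.s. A suitable further choice of $\psi$ (or $\psi$ linear), using this $L^\infty$ bound to control the remaining terms, then yields $\E\int_0^T\int_{\T^n}|\hess u_\varepsilon|^2\,\dx\,\dt\le C(L)$ with $C(L)$ independent of $\varepsilon$ and $T$, hence also $\E\int_0^\infty\int_{\T^n}|\hess u_\varepsilon|^2\le C(L)$.

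With these uniform bounds --- together with the bound on $u_\varepsilon$ itself read off from the equation --- the laws of $(u_\varepsilon,W)$ are tight on a path space with compact embedding, e.g.\ the intersection of $\bigl(L^2(0,T;H^2(\T^n)),w\bigr)$, $\bigl(L^\infty(0,T;W^{1,\infty}(\T^n)),w^\ast\bigr)$ and $C([0,T];L^2(\T^n))$, uniformly for $\varepsilon\in(0,1)$ and, by a diagonal argument, on $[0,\infty)$. By the Jakubowski--Skorokhod representation theorem I would pass, along a subsequence, to a new stochastic basis carrying $\tilde u_\varepsilon\to\tilde u$ and $\tilde W_\varepsilon\to\tilde W$ a.s.\ in these topologies and with unchanged laws. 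The uniform Lipschitz bound makes $\QQ[\grad\tilde u_\varepsilon]\to\QQ[\grad\tilde u]$ and $\vv[\grad\tilde u_\varepsilon]\to\vv[\grad\tilde u]$ strongly, since the nonlinearities are smooth with bounded derivatives on $\{|p|\le L\}$; the uniform Hessian bound gives $\varepsilon\lapl\tilde u_\varepsilon\to0$ and identifies the limit of $\QQ\divv$. Standard arguments for the passage to the limit in the stochastic and nonlinear terms then show that $\tilde u$, together with the new stochastic basis and $\tilde W$, is a martingale solution of \eqref{eq:smcf} with initial law $\Lambda$; the two bounds pass to the limit by weak(-$\ast$) lower semicontinuity and closedness of $\{|p|\le L\}$. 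Finally, the very same It\^o/regrouping computation, now with $\varepsilon=0$, applies to an \emph{arbitrary} martingale solution --- it uses only the equation and the regularity $u\in L^2(\Omega;L^2(0,t;H^2(\T^n)))$ from \Cref{defn:solution} --- and yields both stated bounds for every such solution.

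The main obstacle is the energy estimate of the second step: identifying the correct energy $E_\psi$ and proving, through a careful It\^o expansion, that after assembling the mean-curvature drift, the viscous term, the It\^o--Stratonovich correction and the quadratic variation of the noise, the drift is nonpositive \emph{and} controls a genuine Hessian dissipation, uniformly in $\varepsilon$ (and in $T$). This is exactly where the compatibility between the deterministic mean curvature part and the stochastic perturbation is exploited; in the deterministic limit it specializes to a Bernstein-type gradient bound for mean curvature flow of graphs.
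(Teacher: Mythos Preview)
Your proposal is correct and follows essentially the same strategy as the paper: viscous approximation via the variational SPDE framework, the supermartingale property of energies of the form $\int_{\T^n}\psi(\QQ)$ established by an It\^o expansion in which the mean-curvature drift, the It\^o--Stratonovich correction and the noise quadratic variation combine with a good sign (the paper's Lemmas~\ref{lem:energy_ito_formula}--\ref{lem:energy_convex_integrand}), the gradient maximum principle from $\psi\approx(\,\cdot\,-\sqrt{1+L^2})_+$, the Hessian bound from a second choice of $\psi$ together with the Lipschitz bound, and finally tightness plus Jakubowski--Skorokhod to pass to the limit, with the same $\varepsilon=0$ computation covering arbitrary solutions. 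The only minor deviation is that the paper obtains the Hessian control from $\psi(r)=r^2$ (Proposition~\ref{prop:dirichlet}) rather than the linear choice you mention, and uses a slightly different path-space topology for tightness; neither affects the argument.
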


Our next main result shows that solutions become spatially constant for $t\to\infty$.
\begin{theorem}[Large-time behavior]
  \label{thm:large-time}
  Let the assumptions from \Cref{thm:main} hold and $u$ be a martingale solution of \eqref{eq:smcf} for $I = [0,\infty)$.

  Then there is a real-valued random variable $\alpha$ such that
  \begin{align*}
    \E \sup_{t \ge T} \left\| u(t) - W(t) - \alpha \right\|_{H^1(\T^n)} \to 0 \quad \text{for } T \to \infty.
  \end{align*}
\end{theorem}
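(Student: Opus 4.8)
The plan is to show that the spatial oscillation of $u(t)$ decays and that $u(t) - W(t)$ converges, so that the limit must be a random constant $\alpha$. Write $\bar u(t) \coloneqq \int_{\T^n} u(\cdot,t)\,dx$ for the spatial mean and decompose $u = \bar u + \tilde u$ with $\int_{\T^n}\tilde u\,dx = 0$. By the Poincaré inequality on $\T^n$, controlling $\|u(t)-W(t)-\alpha\|_{H^1}$ amounts to controlling $\|\nabla u(t)\|_{L^2}$ together with the scalar quantity $\bar u(t) - W(t)$. For the gradient part, I would use the a priori energy estimates from \Cref{sec:a-priori-estimates}: the supermartingale/monotonicity structure there should give, uniformly in $\varepsilon$ and hence in the limit, that $\E\int_0^\infty \|\hess u(s)\|_{L^2(\T^n)}^2\,ds < \infty$ (this is already asserted in \Cref{thm:main}) and that $t\mapsto \E\|\nabla u(t)\|_{L^2}^2$ is non-increasing. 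Combining the finite total dissipation with the Poincaré inequality $\|\nabla u\|_{L^2}^2 \le C\|\hess u\|_{L^2}^2$ (valid on $\T^n$ for mean-... — here for gradients, since $\nabla u$ has mean zero componentwise) forces $\E\|\nabla u(t)\|_{L^2}^2 \to 0$, and a Borel--Cantelli / uniform-integrability upgrade using the monotonicity of the energy along a suitable subsequence gives $\E\sup_{t\ge T}\|\nabla u(t)\|_{L^2}\to 0$.

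For the scalar part, I would test the equation (in the It\^o form from the Remark, $du = (\lapl u - \tfrac12 \vvu\cdot\hess u(t)\,\vvu)\,dt + \QQu\,dW$) against the constant function $1$ on $\T^n$. Since $\int_{\T^n}\lapl u\,dx = 0$, this yields
\begin{align*}
  \bar u(t) - \bar u_0 = -\frac12 \int_0^t \int_{\T^n} \vvu[s]\cdot\hess u(s)\,\vvu[s]\,dx\,ds + \int_0^t \left(\int_{\T^n}\QQu[s]\,dx\right)dW_s.
\end{align*}
Write $\bar u(t) - W(t) = \bar u_0 + M(t) + A(t)$ where $A(t) = -\tfrac12\int_0^t\int_{\T^n}\vvu\cdot\hess u\,\vvu\,dx\,ds$ is of bounded variation and $M(t) = \int_0^t \big(\int_{\T^n}(\QQu[s]-1)\,dx\big)dW_s$ is a martingale (the $-1$ coming from $dW(t) = \int_{\T^n}1\,dx\,dW(t)$). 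Using $|\QQ[p]-1| \le \tfrac12|p|^2$ and $|\vv\cdot\hess u\,\vv| \le |\hess u|$, both $\|\QQu-1\|_{L^2}$ and $|\int\vvu\cdot\hess u\,\vvu\,dx|$ are controlled by $\|\nabla u\|_{L^\infty}\|\nabla u\|_{L^2}$ respectively $\|\hess u\|_{L^2}$; the uniform bound $\|\nabla u\|_{L^\infty}\le L$ from \Cref{thm:main} together with $\E\int_0^\infty\|\nabla u\|_{L^2}^2\,ds<\infty$ (again from the dissipation, via Poincaré) and $\E\int_0^\infty\|\hess u\|_{L^2}^2\,ds<\infty$ shows $\E\int_0^\infty\big(\int_{\T^n}(\QQu-1)\,dx\big)^2 ds < \infty$ and $\E\int_0^\infty|\int_{\T^n}\vvu\cdot\hess u\,\vvu\,dx|\,ds<\infty$. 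Hence $M$ is an $L^2$-bounded martingale and $A$ has absolutely convergent total variation, so both converge a.s. and in $L^1$ as $t\to\infty$; define $\alpha \coloneqq \bar u_0 + M(\infty) + A(\infty)$. The Doob maximal inequality applied to the tail martingale $M(t)-M(T)$ and dominated convergence for the tail of $A$ give $\E\sup_{t\ge T}|\bar u(t) - W(t) - \alpha| \to 0$.

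Finally, since $\|u(t)-W(t)-\alpha\|_{H^1(\T^n)}^2 \le C\big(|\bar u(t) - W(t) - \alpha|^2 + \|\nabla u(t)\|_{L^2(\T^n)}^2\big)$ (the spatial mean of $u(t)-W(t)-\alpha$ is $\bar u(t)-W(t)-\alpha$, and $\nabla(u-W-\alpha) = \nabla u$), taking $\sup_{t\ge T}$, then $\sqrt{\cdot}$, then expectations and using $\sqrt{a+b}\le\sqrt a + \sqrt b$ reduces the claim to the two pieces established above. The main obstacle I anticipate is the passage from the time-integrated dissipation bound to the \emph{uniform-in-time supremum} convergence $\E\sup_{t\ge T}\|\nabla u(t)\|_{L^2}\to 0$: one needs a genuine pathwise decay statement, not merely $\E\|\nabla u(t)\|_{L^2}^2\to 0$. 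I would obtain this by exploiting the supermartingale property of the relevant energy from \Cref{sec:a-priori-estimates} — a non-negative supermartingale converges a.s. and its running supremum past time $T$ tends to its time-$T$ value, which itself tends to $0$ — combined with uniform integrability coming from the $L^\infty$ gradient bound; care is needed because the energy controlling $\|\nabla u\|_{L^2}^2$ may only be a supermartingale up to a correction, so one has to track that correction is itself summable. A secondary technical point is that all these estimates must be justified first for the viscous approximation $u_\varepsilon$ with constants independent of $\varepsilon$ and then passed to the limit using the convergence from \Cref{sec:vanishing-viscosity} together with lower semicontinuity of the norms under weak convergence.
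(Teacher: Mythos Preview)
Your proposal is correct and follows essentially the same route as the paper: test the equation against the constant $1$ to extract the mean, split $\bar u(t)-W(t)$ into a convergent drift and an $L^2$-bounded martingale (the paper uses $|\QQ[p]-1|\le |p|$ rather than $\tfrac12|p|^2$, but either suffices), define $\alpha$ as the limit, and control the oscillatory part via the Poincar\'e inequality and the supermartingale property of $\|\nabla u(t)\|_{L^2}^2$ from \Cref{prop:dirichlet}.

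Two small remarks. First, your worry that the Dirichlet energy is only a supermartingale ``up to a correction'' is unfounded: \Cref{prop:dirichlet} states directly that $\int_{\T^n}|\nabla u|^2$ is a genuine supermartingale for $\varepsilon=0$, so no correction needs to be tracked. For the passage to $\E\sup_{t\ge T}\|\nabla u(t)\|_{L^2}\to 0$, the paper invokes the quantitative bound $\E\sup_{t\ge T}\|\nabla u(t)\|_{L^2}^2\le C\,\E\|\nabla u(T)\|_{L^2}^2$ from \Cref{prop:dirichlet} (applied with initial time $T$), whereas your argument via a.s.\ supermartingale convergence plus dominated convergence (using the uniform bound $\|\nabla u(t)\|_{L^2}^2\le L^2|\T^n|$) is an equally valid alternative. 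Second, the detour through the viscous approximation in your last sentence is unnecessary: \Cref{prop:dirichlet} and \Cref{prop:maximum-principle} are stated for all $\varepsilon\ge 0$ and apply directly to the martingale solution $u$.
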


\begin{remark}
  We will deduce existence of solutions $(u^\varepsilon)_{\varepsilon > 0}$ of the viscous equation \eqref{eq:smcf-viscous} using the abstract theory of variational SPDEs presented in \Cref{sec:variational-spdes}. The fact that \eqref{eq:smcf-viscous} can be treated as a coercive equation already yields estimates for the Dirichlet energy of solutions.

  In \Cref{sec:a-priori-estimates} we will extend these arguments to prove more general a priori estimates for solutions which are uniform in $\varepsilon > 0$. For this we will make use of a generalization of the classical It\^o formula to prove that certain gradient-dependent energies are non-increasing for solutions in a stochastic sense, i.e. they are supermartingales. In the deterministic case one can use Huisken's monotonicity formula to get similar results. With the stochastic perturbation, Huisken's monotonicity formula does not hold because the time-derivative of these energies contains additional It\^o-Stratonovich correction terms that are difficult to control. For our gradient-dependent energies we use integration by parts to prove that these correction terms together with terms stemming from the deterministic motion have a good sign. We will apply this result to deduce estimates for the Dirichlet energy in \Cref{prop:dirichlet} and a maximum principle for the gradient in \Cref{prop:maximum-principle}.

  With our uniform Lipschitz bounds at hand and \Cref{prop:dirichlet} we deduce that \eqref{eq:smcf} is coercive and this yields $H^2$ bounds for $(u^\varepsilon)$. Furthermore we derive tightness of their probability laws in appropriate spaces and with the Jakubowski-Skorokhod representation we can deduce that the approximate solutions converge in a weak sense. We then identify the limit in \Cref{sec:vanishing-viscosity}.

  The a priori estimates derived for the solution are also one key to analyze the large-time behavior of solutions. 
\end{remark}

\section{Existence of viscous approximation}
\label{sec:existence-viscous}
We will use the theory presented in \Cref{sec:variational-spdes} to prove existence for a viscous approximation \eqref{eq:smcf-viscous} of the stochastic mean curvature flow. The key observation is that the variational framework shall be applied to the equation for $\nabla u$, see \eqref{eq:smcf-viscous-gradient} below, rather than directly to \eqref{eq:smcf-viscous}. This is further made possible by the structure of \eqref{eq:smcf-viscous} and in particular by the fact that only the gradient of the solution appears on the right hand side of \eqref{eq:smcf-viscous}.

\begin{theorem}
  \label{thm:existence_viscous}
  Let $\varepsilon > 0$, $q > 2$ and $\Lambda$ be a Borel probability measure on $H^1(\T^n)$ with 
  \begin{align*}
    \int_{H^1(\T^n)} \| z \|_{H^1(\T^n)}^2 \dif \Lambda(z) < \infty \text{ and } \\
    \int_{H^1(\T^n)} \| \nabla z \|_{L^2(\T^n)}^q \dif \Lambda(z) < \infty.
  \end{align*}
  Then there is a martingale solution $u$ of \eqref{eq:smcf-viscous} for $I = [0,\infty)$ with initial data $\Lambda$.

  \begin{proof}[Proof of \Cref{thm:existence_viscous}]
    We intend to apply \Cref{thm:variational-existence} in order to obtain a martingale solution to the equation the gradient $\nabla u$ fulfills for $u$ satisfying \eqref{eq:smcf-viscous}, which in turn yields a martingale solution to \eqref{eq:smcf-viscous} itself.
    To this end, we will work with the spaces
    \begin{align*}
      V &\coloneqq \{ \nabla u  \mid u \in H^2(\T^n) \} \text{ with } \| \nabla u \|_V \coloneqq \| \nabla u \|_{H^1(\T^n; \R^n)}, \\
      H &\coloneqq \{ \nabla u \mid u \in H^1(\T^n) \} \text{ with } \| \nabla u \|_H \coloneqq \| \nabla u \|_{L^2(\T^n; \R^n)} \text{ and } \\
      U &\coloneqq \R.
    \end{align*}
    We have that $V \subset H$ densely and compactly. Furthermore we can identify $L_2(U;H) = H$.

    We define the operators
    \begin{gather*}
      A_\varepsilon : V \to V' \\
      \begin{split}
        &\sprod[V',V]{A_\varepsilon(\nabla u)}{\nabla w} \\
        &\quad\coloneqq - \int_{\T^n} \left( \varepsilon \Delta u + \QQ\divv + \frac{1}{2} \vv \cdot \hess u \vv \right) \Delta w \\
        &\quad= - \int_{\T^n} \left( (1+\varepsilon) \Delta u - \frac{1}{2} \vv \cdot \hess u \vv \right) \Delta w
      \end{split}
    \end{gather*}
    and
    \begin{gather*}
      B : V \to H \\
      B(\nabla u) \coloneqq \nabla \left( \QQ \right) = \hess u \vv.
    \end{gather*}

    We verify that the \Cref{assumptions:variational-existence} are fulfilled:
    \begin{itemize}
    \item \textbf{Coercivity:} Using integration by parts and the fact that the boundary terms vanish because of the periodic domain we obtain
      \begin{align*}
        2&\sprod[V',V]{A_\varepsilon(\nabla u)}{\nabla u} + \| B(\nabla u) \|_H^2 \\
         &= \int_{\T^n} -\left( 2\varepsilon \Delta u + 2\QQ \divv +  \vv \cdot \hess u \vv \right) \Delta u \\
        &\phantom{{}={}\int} + |\hess u \vv|^2 \\
                                                         &= \int_{\T^n} - 2\varepsilon (\Delta u)^2 - \frac{1}{2} |\QQ \divv|^2- (\Delta u)^2 + |\hess u \vv|^2 \\
                                                         &\phantom{{}={}\int} + \frac{1}{2} \divv \left(\QQ^2 \divv -  2\QQ \Delta u\right)  \\
                                                         &= \int_{\T^n} -2\varepsilon (\Delta u)^2 - \frac{1}{2} |\QQ \divv |^2 \\
                                                         &\phantom{{}={}\int}- \frac{1}{2} \divv \divergence \left( Q\nabla u \right) - |\hess u|^2 + |\hess u \vv|^2 \\
                                                         &= \int_{\T^n} -2\varepsilon (\Delta u)^2 - \frac{1}{2} |\QQ \divv |^2 \\
                                                         &\phantom{{}={}\int}- \frac{3}{2} |\hess u|^2 + |\hess u \vv|^2 + \frac{1}{2} |\vv \cdot \hess u \vv|^2 \\
                                                         &\le -2 \varepsilon \| \Delta u \|_{L^2(\T^n)}^2 \\
                                                         &\le -C \varepsilon \| \nabla u \|_{H^1(\T^n; \R^n)}^2.
      \end{align*}
      Note that we have used the non-negativity of
      \begin{align*}
        \frac{3}{2}|\hess u|^2 - |\hess u \vv|^2 - \frac{1}{2} |\vv \cdot \hess u \vv |^2
      \end{align*}
      in the second to last inequality and the periodic boundary conditions as well as a Poincar\'e inequality for mean-free vector fields in the last inequality.
    \item \textbf{Growth bounds:} We have
      \begin{align*}
        \| A_\varepsilon(\nabla u) \|_{V'}^2 &\le \int_{\T^n} \left| \varepsilon \Delta u +\QQ \divv + \frac{1}{2} \vv \cdot \hess u \vv \right|^2 \\
                                             &= \int_{\T^n} \left| (1 + \varepsilon)\Delta u - \frac{1}{2}\vv \cdot \hess u \vv \right|^2 \\
                                             &\le C \| \nabla u \|_{H^1(\T^n;\R^n)}^2, \\
        \| B(\nabla u) \|_H^2 &= \| \nabla (\QQ) \|_{L^2(\T^n;\R^n)}^2 \le C \| \nabla u \|_{H^1(\T^n;\R^n)}^2, \\
        \| B(\nabla u) \|_{V'}^2 &\le C \left(1 + \| \nabla u \|_{L^2(\T^n;\R^n)}^2\right).
      \end{align*}
    \item \textbf{Continuity:} When $\nabla u_k \rightharpoonup \nabla u$ in $V$, then $\nabla u_k \to \nabla u$ in $H$ and therefore
      \begin{align*}
        \vv[\nabla u_k] \cdot \hess u_k \vv[\nabla u_k] &= \vv[\nabla u_k] \otimes \vv[\nabla u_k] : \hess u_k \\
                                                        &\rightharpoonup \vv \otimes \vv : \hess u \\
                                                        &= \vv \cdot \hess u \vv \text{ in } L^1(\T^n)
      \end{align*}
      and since $|\vv[\nabla u_k]| \le 1$ also
      \begin{align*}
        \vv[\nabla u_k] \cdot \hess u_k \vv[\nabla u_k]\rightharpoonup \vv \cdot \hess u \vv \text{ in } L^2(\T^n).
      \end{align*}
      The other terms in the definition of $A(u_k)$ are linear in $u_k$, hence
      \begin{align*}
        A(u_k) \overset{\ast}{\rightharpoonup} A(u) \text{ in } V'.
      \end{align*}
      Similarly
      \begin{align*}
        B(\nabla u_k) = \hess u_k \vv[\nabla u_k] \rightharpoonup \hess u \vv \text{ in } L^2(\T^n;\R^n).
      \end{align*}
    \end{itemize}

      Now, from \Cref{thm:variational-existence} we can conclude that there is a martingale solution $\nabla u$ of
      \begin{align}
        \label{eq:smcf-viscous-gradient}
        \begin{split}
          \dif \nabla u &= \nabla \left(\varepsilon \lapl u + \QQ \divv + \frac{1}{2} \vv \cdot \hess u \vv \right) \dt \\
          &\quad + \nabla \left(\QQ\right) \dW \\
          &= \nabla \left(\varepsilon \lapl u + \QQ \divv \right) \dt + \nabla \left(\QQ\right) \circ \dW \text{ in } V'
        \end{split}
      \end{align}
      with a real-valued Brownian motion $W$.

      Next we will show that \eqref{eq:smcf-viscous-gradient} is also fulfilled in $H^{-1}(\T^n;\R^n)$, hence weak in the PDE sense. 
      For an arbitrary $\psi \in H^1(\T^n;\R^n)$ we take the Helmholtz decomposition $\psi = \nabla w + \phi$ with $w \in H^2(\T^n)$ and $\phi \in H^1(\T^n;\R^n)$ with $\divergence \phi = 0$ and since both sides of the equation for $\nabla u$ are orthogonal to divergence-free vector fields, we have for all $t \in [0, \infty)$
      \begin{align*}
        \int_{\T^n} \left(\nabla u(t) - \nabla u_0\right) \cdot \psi
        &=\int_0^t \sprod[H^{-1};H^1]{\nabla \left(\QQu[s] \divvu[s] \right)}{\psi} \ds \\
        &\phantom{{}={}} + \int_0^t \int_{\T^n} \nabla \left(\QQu[s]\right) \cdot \psi \circ \dW(s)
      \end{align*}
      and therefore the equation for $\nabla u$ is also fulfilled in $H^{-1}(\T^n;\R^n)$.
 
      Now, define for $t \in [0,\infty)$ 
      \begin{align}
        \label{eq:existence-viscous-tilde-u}
        \tilde{u}(t) \coloneqq u_0 + \int_0^t  \QQu[s] \divvu[s] \ds + \int_0^t \QQu[s] \circ \dW(s).
      \end{align}
      Note that by assumption $u_0 \in L^2(\Omega; L^2(\T^n))$ and also for $T \in [0,\infty)$
      \begin{align*}
        t &\mapsto \int_0^t \QQu[s] \divvu[s] \ds \in L^2(\Omega; L^2(0,T; L^2(\T^n))) \text{ and } \\
        t &\mapsto  \int_0^t \QQu[s] \circ \dW(s)\in L^2(\Omega; L^2(0,T; L^2(\T^n))) .
      \end{align*}
      Hence, $\tilde{u} \in L^2(\Omega; L^2(0,T; L^2(\T^n)))$. Furthermore
      \begin{align*}
        \nabla \tilde{u}(t) &= \nabla u_0 + \int_0^t \nabla \left(\QQu[s] \divvu[s] \right)\ds\\
                            &\quad + \int_0^t \nabla \left(\QQu[s] \right) \circ \dW(s) = \nabla u(t) ~\forall t \in [0,\infty) ~\P\text{-a.s.}
      \end{align*}
      and by \eqref{eq:existence-viscous-tilde-u} $\tilde{u}$ is a martingale solution of \eqref{eq:smcf-viscous}.
  \end{proof}
\end{theorem}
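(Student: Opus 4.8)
The plan is to deduce \Cref{thm:existence_viscous} from the abstract variational existence theorem \Cref{thm:variational-existence}. The viscous equation \eqref{eq:smcf-viscous} has no coercivity with respect to $u$ itself and its right-hand side depends on $u$ only through $\nabla u$, so instead of applying the abstract theory to $u$ I apply it to the equation \eqref{eq:smcf-viscous-gradient} satisfied by $\nabla u$. For this I use the Gelfand triple $V\subset H\subset V'$ with
\begin{align*}
  V \coloneqq \{ \nabla u \mid u \in H^2(\T^n) \}, \qquad H \coloneqq \{ \nabla u \mid u \in H^1(\T^n) \},
\end{align*}
$V$ normed by $\|\cdot\|_{H^1}$ and $H$ by $\|\cdot\|_{L^2}$, with $V\hookrightarrow H$ dense and, by Rellich, compact, and noise space $U\coloneqq\R$, so that $L_2(U;H)=H$. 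Passing to the It\^o form of \eqref{eq:smcf-viscous} (the It\^o--Stratonovich correction being $\tfrac12\vv\cdot\hess u\vv$), I take as drift the operator $A_\varepsilon:V\to V'$ with
\begin{align*}
  \sprod[V',V]{A_\varepsilon(\nabla u)}{\nabla w} \coloneqq - \int_{\T^n} \left( \varepsilon \lapl u + \QQ \divv + \tfrac12 \vv \cdot \hess u \vv \right) \lapl w,
\end{align*}
and as diffusion $B:V\to H$, $B(\nabla u)\coloneqq\nabla(\QQ)=\hess u\vv$.

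The substance of the proof is the verification of \Cref{assumptions:variational-existence}. The decisive point is \emph{coercivity}: one expands $2\sprod[V',V]{A_\varepsilon(\nabla u)}{\nabla u}+\|B(\nabla u)\|_H^2$, integrates by parts on $\T^n$ (all boundary terms vanish), uses $\QQ^2=1+|\nabla u|^2$, and regroups the second-order terms into
\begin{align*}
  \int_{\T^n} \left( -2\varepsilon (\lapl u)^2 - \tfrac12 |\QQ \divv|^2 - \tfrac32 |\hess u|^2 + |\hess u \vv|^2 + \tfrac12 |\vv \cdot \hess u \vv|^2 \right);
\end{align*}
since $|\vv|\le 1$ one has $|\hess u\vv|^2+\tfrac12|\vv\cdot\hess u\vv|^2\le\tfrac32|\hess u|^2$, so the whole expression is $\le -2\varepsilon\|\lapl u\|_{L^2}^2$, and because $\int_{\T^n}(\lapl u)^2=\int_{\T^n}|\hess u|^2$ for periodic $u$, a Poincar\'e inequality for the mean-free field $\nabla u$ gives $\|\lapl u\|_{L^2}^2\ge C\|\nabla u\|_V^2$. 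The \emph{growth bounds} $\|A_\varepsilon(\nabla u)\|_{V'}\le C\|\nabla u\|_V$, $\|B(\nabla u)\|_H\le C\|\nabla u\|_V$ and $\|B(\nabla u)\|_{V'}^2\le C(1+\|\nabla u\|_H^2)$ are routine (for the last, move the derivative onto the test function: $\sprod{\hess u\vv}{\psi}=-\int_{\T^n}\QQ\,\divergence\psi$). For \emph{continuity}, if $\nabla u_k\rightharpoonup\nabla u$ in $V$ then $\nabla u_k\to\nabla u$ in $H$ by compactness, hence $\vv[\nabla u_k]\to\vv$ boundedly a.e.\ while $\hess u_k\rightharpoonup\hess u$ in $L^2$, so $\vv[\nabla u_k]\otimes\vv[\nabla u_k]:\hess u_k\rightharpoonup\vv\otimes\vv:\hess u$ in $L^2$; the remaining terms in $A_\varepsilon$ are linear, whence $A_\varepsilon(\nabla u_k)\overset{\ast}{\rightharpoonup}A_\varepsilon(\nabla u)$ in $V'$ and $B(\nabla u_k)\rightharpoonup B(\nabla u)$ in $H$. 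The higher-moment hypothesis on $\Lambda$, finiteness of $\int\|\nabla z\|_{L^2}^q\dif\Lambda$ with $q>2$, is exactly what the compactness part of \Cref{thm:variational-existence} requires of the initial law.

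With the assumptions checked, \Cref{thm:variational-existence} yields a stochastic basis, a real Brownian motion $W$, an $\F_0$-measurable $\nabla u_0$ whose law is the pushforward of $\Lambda$ under $\nabla$, and a predictable $\nabla u\in L^2(\Omega;L^2(0,T;V))$ solving \eqref{eq:smcf-viscous-gradient} in $V'$. I next observe that the equation in fact holds in $H^{-1}(\T^n;\R^n)$: for $\psi\in H^1(\T^n;\R^n)$ use the Helmholtz decomposition $\psi=\nabla w+\phi$ with $w\in H^2(\T^n)$, $\divergence\phi=0$; since both sides are gradients, hence $L^2$-orthogonal to divergence-free fields, testing against $\psi$ coincides with testing against $\nabla w\in V$. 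Finally I recover $u$ by setting
\begin{align*}
  \tilde u(t) \coloneqq u_0 + \int_0^t \left( \varepsilon \lapl u(s) + \QQu[s] \divvu[s] \right) \ds + \int_0^t \QQu[s] \circ \dW(s);
\end{align*}
using $\QQ\divv=\lapl u-\vv\cdot\hess u\vv$, $u_0\in L^2(\Omega;L^2(\T^n))$ and $\nabla u\in L^2(\Omega;L^2(0,T;V))$ one checks that both integrals, and hence $\tilde u$, lie in $L^2(\Omega;L^2(0,T;L^2(\T^n)))$, and commuting $\nabla$ (a bounded map into $H^{-1}$) with the integrals and comparing with \eqref{eq:smcf-viscous-gradient} gives $\nabla\tilde u(t)=\nabla u(t)$ for all $t$ a.s. Thus $\tilde u$ solves \eqref{eq:smcf-viscous} in $L^2(\T^n)$ and, together with the stochastic basis and $W$, is the desired martingale solution.

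I expect the coercivity computation to be the main obstacle: the cancellation leaving only non-positive terms besides $-2\varepsilon(\lapl u)^2$ hinges on the precise balance between the mean-curvature drift $\QQ\divv$, the It\^o--Stratonovich correction $\tfrac12\vv\cdot\hess u\vv$, and the quadratic variation $|\hess u\vv|^2$ of the noise, together with $|\vv|\le 1$ — this structural coincidence is what makes \eqref{eq:smcf-viscous} tractable and foreshadows the refined a priori estimates. A subsidiary point is that \eqref{eq:smcf-viscous} is not covered by the classical variational SPDE theory, so one relies on the generalized It\^o formula and existence result of \Cref{sec:variational-spdes}, which are available here by assumption.
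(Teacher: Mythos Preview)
Your proposal is correct and follows essentially the same route as the paper: the same Gelfand triple on gradient fields, the same operators $A_\varepsilon$ and $B$, the same coercivity computation and sign check via $|\vv|\le 1$, the same growth and continuity verifications, the same Helmholtz argument to upgrade from $V'$ to $H^{-1}$, and the same reconstruction of $\tilde u$ from $\nabla u$. Your displayed definition of $\tilde u$ correctly includes the $\varepsilon\lapl u$ term, which the paper's formula \eqref{eq:existence-viscous-tilde-u} in fact omits (an apparent typo there, since without it $\tilde u$ would not solve \eqref{eq:smcf-viscous}).
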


\section{A priori estimates}
\label{sec:a-priori-estimates}

In this section we will prove a priori energy estimates for solutions of the viscous equation \eqref{eq:smcf-viscous} which are uniformly in $\varepsilon > 0$ and also hold true for solutions of the SMCF equation \eqref{eq:smcf}.
The first proposition basically says that the Dirichlet energy of solutions is decreasing and extends the coercivity proven in \Cref{sec:existence-viscous}.

\begin{proposition}[Weak coercivity]
  \label{prop:dirichlet}
  Let $\varepsilon \ge 0$ and $u$ be a strong solution of \eqref{eq:smcf-viscous}.  Then the energy $\int_{\T^n} |\nabla u|^2$ is a supermartingale.

  Furthermore, we can quantify the decay by
  \begin{align*}
    \E \| &\nabla u(t) \|_{L^2(\T^n)}^{2} + 2 \varepsilon \E\int_0^t \int_{\T^n} |\hess u(s)|^2 \ds \\
                                         &+ \frac{1}{2} \E \int_0^t \int_{\T^n} \QQ[\nabla u(s)]^2 |\divvu[s]|^2 \ds \\
                                         &+ \E \int_0^t \int_{\T^n} \left(\frac{3}{2}|\hess u|^2 - |\hess u \vv|^2 - \frac{1}{2} |\vv \cdot \hess u \vv |^2 \right)(s) \ds \\
                                         &\le \E \| \nabla u_0 \|_{L^2(\T^n)}^2 ~\forall t \in I.
  \end{align*}
  Note that $\frac{3}{2}|\hess u|^2 - |\hess u \vv|^2 - \frac{1}{2} |\vv \cdot \hess u \vv |^2 \ge 0$.

  We also have for $q \in [1,2)$ with a universal constant $C$, that
  \begin{align*}
    \E \sup_{t \in I} \| \nabla u(t) \|_{L^2(\T^n)}^{2q} \le \left(2 +  \frac{2C^2}{2q - q^2}\right) \E  \| \nabla u_0 \|_{L^2(\T^n)}^{2q}.
  \end{align*}

  If in addition $\esssup[\P] \| \nabla u \|_{L^\infty(I;L^\infty(\T^n))} = L < \infty$, then we have
  \begin{align*}
    \E \| \nabla u(t) \|_{L^2(\T^n)}^{2} &+ \frac{3 + 4L^2}{2(1+L^2)^2} \E \int_0^t \int_{\T^n} |\hess u(s)|^2 \ds \le \E \| \nabla u_0 \|_{L^2(\T^n)}^2 ~\forall t \in I.
  \end{align*}
\end{proposition}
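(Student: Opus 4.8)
The plan is to apply the It\^o formula of \Cref{cor:variational-sobolev-ito} to the squared $H$-norm along the equation \eqref{eq:smcf-viscous-gradient} satisfied by $\nabla u$ (valid also for $\varepsilon = 0$), using the operators $A_\varepsilon, B$ from the proof of \Cref{thm:existence_viscous}, and to reuse the computation of $2\sprod[V',V]{A_\varepsilon(\nabla u)}{\nabla u} + \| B(\nabla u) \|_H^2$ carried out there. With
\begin{align*}
  b_s & \coloneqq 2\sprod[V',V]{A_\varepsilon(\nabla u(s))}{\nabla u(s)} + \| B(\nabla u(s)) \|_H^2 \\
      & = -2\varepsilon \| \lapl u \|_{L^2(\T^n)}^2 - \tfrac12 \int_{\T^n} |\QQ \divv|^2 - \int_{\T^n} \Bigl( \tfrac32 |\hess u|^2 - |\hess u \vv|^2 - \tfrac12 |\vv \cdot \hess u \vv|^2 \Bigr),
\end{align*}
evaluated at $\nabla u(s)$, the It\^o formula reads $\dif \| \nabla u \|_{L^2(\T^n)}^2 = b_t\, \dt + 2\sprod[H]{\nabla u}{B(\nabla u)}\, \dW$, and $b_t \le 0$ by the stated non-negativity. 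Since $\| \nabla u \|_{L^2(\T^n)}^2 \ge 0$, the right-hand side is then a nonnegative continuous local supermartingale, hence a supermartingale, which is the first claim. For the quantitative decay I would stop the local-martingale part at a localizing sequence $\tau_k \uparrow \infty$, take expectations on $[0, t \wedge \tau_k]$, and let $k \to \infty$ using path-continuity of $\| \nabla u \|_{L^2(\T^n)}^2$ (Fatou on the left) together with monotone convergence for $-b_s \ge 0$ on the right; the required $L^1$-integrability of the drift follows from $u \in L^2(\Omega; L^2(0,t; H^2(\T^n)))$ and $|\QQ \divv| = |\lapl u - \vv \cdot \hess u \vv| \le C|\hess u|$.

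The decisive new ingredient for the $L^{2q}$-bound is the estimate $\bigl| 2\sprod[H]{\nabla u}{B(\nabla u)} \bigr|^2 \le 2 \| \nabla u \|_{L^2(\T^n)}^2 \, (-b)$. I would prove it by integration by parts on the periodic domain,
\begin{align*}
  2\sprod[H]{\nabla u}{B(\nabla u)} = 2 \int_{\T^n} \nabla u \cdot \hess u \vv = \int_{\T^n} \vv \cdot \nabla \bigl( |\nabla u|^2 \bigr) = - \int_{\T^n} |\nabla u|^2 \divv,
\end{align*}
followed by the Cauchy--Schwarz inequality, the identity $|\nabla u|^2 |\divv|^2 = |\vv|^2 |\QQ \divv|^2$ (recall $\vv = \nabla u / \QQ$), and $|\vv| \le 1$:
\begin{align*}
  \left( \int_{\T^n} |\nabla u|^2 \divv \right)^2 & \le \| \nabla u \|_{L^2(\T^n)}^2 \int_{\T^n} |\nabla u|^2 |\divv|^2 \\
  & \le \| \nabla u \|_{L^2(\T^n)}^2 \int_{\T^n} |\QQ \divv|^2 \le 2 \| \nabla u \|_{L^2(\T^n)}^2 \, (-b),
\end{align*}
the last step because $-b \ge \tfrac12 \int_{\T^n} |\QQ \divv|^2$.

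Writing $y_t \coloneqq \| \nabla u(t) \|_{L^2(\T^n)}^2$ and $h_t \coloneqq 2\sprod[H]{\nabla u(t)}{B(\nabla u(t))}$, I would apply the It\^o formula to $y_t^q$ for $q \in [1,2)$ (if necessary via the regularization $y \mapsto (\eta + y)^q$, $\eta \downarrow 0$). Since $h_t^2 \le 2 y_t(-b_t)$, the drift of $y_t^q$ is at most $q y_t^{q-1} b_t + q(q-1) y_t^{q-1}(-b_t) = q(2-q) y_t^{q-1} b_t \le 0$, because $q < 2$ and $b_t \le 0$. Hence $\tilde N_t \coloneqq y_t^q + q(2-q) \int_0^t y_s^{q-1}(-b_s)\, \ds$ is dominated by the nonnegative continuous local martingale $y_0^q + \int_0^t q y_s^{q-1} h_s\, \dW_s$, which is therefore a supermartingale with expectation at most $\E y_0^q$. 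Setting $\tilde D_t \coloneqq q(2-q) \int_0^t y_s^{q-1}(-b_s)\, \ds \le \tilde N_t$, the Burkholder--Davis--Gundy inequality and $h_s^2 \le 2 y_s(-b_s)$ give
\begin{align*}
  \E \sup_{s \le t} y_s^q & \le \E y_0^q + C\, \E \left( \int_0^t q^2 y_s^{2q-2} h_s^2\, \ds \right)^{1/2} \\
                          & \le \E y_0^q + C \Bigl( \tfrac{2q}{2-q} \Bigr)^{1/2} \E \Bigl[ \bigl( \sup_{s \le t} y_s^q \bigr)^{1/2} \tilde D_t^{1/2} \Bigr],
\end{align*}
and, after Young's inequality, absorbing the $\E \sup y^q$-term and using $\E \tilde D_t = \E \tilde N_t - \E y_t^q \le \E y_0^q$, one obtains $\E \sup_{s \le t} y_s^q \le \bigl( 2 + \tfrac{2C^2}{2q - q^2} \bigr) \E y_0^q$ with a universal constant $C$; as this is uniform in $t$, one lets $t$ range over $I$.

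Finally, under $\| \nabla u \|_{L^\infty(I; L^\infty(\T^n))} \le L$ a.s.\ one has $|\vv|^2 = \tfrac{|\nabla u|^2}{1 + |\nabla u|^2} \le \tfrac{L^2}{1 + L^2}$ pointwise, so from $|\hess u \vv|^2 \le |\hess u|^2 |\vv|^2$ and $|\vv \cdot \hess u \vv|^2 \le |\hess u|^2 |\vv|^4$,
\begin{align*}
  \tfrac32 |\hess u|^2 - |\hess u \vv|^2 - \tfrac12 |\vv \cdot \hess u \vv|^2 & \ge \Bigl( \tfrac32 - |\vv|^2 - \tfrac12 |\vv|^4 \Bigr) |\hess u|^2 \\
  & \ge \frac{3 + 4 L^2}{2(1+L^2)^2} |\hess u|^2,
\end{align*}
using that $s \mapsto \tfrac32 - s - \tfrac12 s^2$ is decreasing on $[0,1]$; inserting this into the quantitative decay estimate and discarding the nonnegative terms $2\varepsilon \| \lapl u \|_{L^2(\T^n)}^2$ and $\tfrac12 \int_{\T^n} |\QQ \divv|^2$ yields the last claim. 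I expect the main work to be isolating the structural estimate $h_t^2 \le 2 y_t(-b_t)$ — that is, recognizing $h_t = -\int_{\T^n} |\nabla u|^2 \divv$ and controlling $\divv$ by the Dirichlet dissipation rather than by the full Hessian — since this is precisely what makes the $y^q$-computation self-contained and, in particular, free of any a priori $L^\infty$-gradient bound; rigorously justifying the It\^o formula and the localization in the variational setting with compact embedding is taken care of by \Cref{sec:variational-spdes}.
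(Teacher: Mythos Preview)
Your proposal is correct and follows essentially the same route as the paper. The paper factors the argument through a general lemma (\Cref{lem:energy_convex_integrand}, itself based on \Cref{lem:energy_ito_formula}) for energies of the form $\int_{\T^n} g(\QQ)$ and then specializes to $g(r)=r^2$; your ``structural estimate'' $h_t^2\le 2y_t(-b_t)$, obtained via $h_t=-\int_{\T^n}|\nabla u|^2\divv$ and Cauchy--Schwarz, is exactly the paper's bound $(\int g(Q)\divergence v)^2\le \mathcal{I}\int g(Q)|\divergence v|^2$ in this special case, and the BDG\,+\,Young step is identical. The only difference is organizational: the paper's detour through general $g$ pays off because the same lemma is reused for the gradient maximum principle (\Cref{prop:maximum-principle}), whereas your direct argument is self-contained for this proposition.
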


In the next proposition we prove that the additional assumptions from \Cref{prop:dirichlet} can be verified if the Lipschitz constant of the initial condition is uniformly bounded.

\begin{proposition}[Maximum principle for the gradient of solutions]
  \label{prop:maximum-principle}
  Let $\varepsilon \ge 0$ and $u$ be a strong solution of \eqref{eq:smcf-viscous}.  If $\esssup[\P] \| \nabla u_0 \|_{L^\infty(\T^n)}< \infty$ then $\nabla u \in L^\infty(I; L^\infty(\T^n))$ a.s. with
  \begin{equation*}
    \| \nabla u \|_{L^\infty(I; L^\infty(\T^n))} \le \esssup[\P] \| \nabla u_0 \|_{L^\infty(\T^n)} ~\text{a.s.}
  \end{equation*}
\end{proposition}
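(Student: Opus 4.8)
The plan is to derive an evolution equation for a suitable nonlinear function of $\nabla u$, apply an It\^o formula in a variational setting (here the one provided by \Cref{cor:variational-sobolev-ito}, applied to the $V'$-valued equation \eqref{eq:smcf-viscous-gradient} for $\nabla u$), and then show that all the resulting terms on the right-hand side have a good sign, so that the relevant quantity is a supermartingale. For the maximum principle one natural approach is to test against a convex function $\beta$ of $|\nabla u|^2$ that penalizes values above the threshold $L_0 \coloneqq \esssup[\P]\|\nabla u_0\|_{L^\infty}$: concretely, for $p$ large one considers the energy $\int_{\T^n} (|\nabla u|^2 - L_0^2)_+^p$ (or a smoothed version thereof), shows it starts at zero, and shows that its drift is non-positive; letting $p\to\infty$ then gives the $L^\infty$ bound. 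Alternatively one can work directly with $\|\nabla u\|_{L^{2p}(\T^n)}^{2p}$ and let $p\to\infty$, keeping track of the constants so that they stay bounded uniformly in $p$.

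First I would compute, via the chain rule for the Stratonovich equation \eqref{eq:smcf-viscous-gradient}, the evolution of $\aniso(|\nabla u|^2)$ for a smooth convex $\aniso$. The key structural observation is that each of the three contributions --- the viscous term $\varepsilon\lapl u$, the deterministic mean curvature term $\QQ\divv$, and the It\^o--Stratonovich correction $\tfrac12 \vv\cdot\hess u\,\vv$ coming from the noise --- produces, after integration by parts on the torus, a combination of a manifestly dissipative principal term and curvature-quadratic remainders. This is exactly the same ``grouping'' phenomenon already exploited in the coercivity computation in the proof of \Cref{thm:existence_viscous} and (in integrated, $L^2$ form) in \Cref{prop:dirichlet}; the point is that the bad cross terms from the noise correction are absorbed by good terms from the deterministic flow, using the pointwise inequality $\tfrac32|\hess u|^2 - |\hess u\,\vv|^2 - \tfrac12|\vv\cdot\hess u\,\vv|^2 \ge 0$ together with $|\vv|\le 1$. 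After integrating in space and taking expectations, the stochastic integral term vanishes (it is a genuine martingale, using the a priori bounds and that the integrand is controlled by $\|\nabla u\|_{H^1}$), leaving $\E\int_{\T^n}\aniso(|\nabla u(t)|^2) + (\text{non-negative dissipation}) \le \E\int_{\T^n}\aniso(|\nabla u_0|^2)$.

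With $\aniso(s) = (s - L_0^2)_+^p$ (regularized near $s = L_0^2$ so that it is $C^2$, with $\aniso,\aniso',\aniso''\ge 0$), the right-hand side is zero since $\|\nabla u_0\|_{L^\infty}\le L_0$ a.s., and hence $\E\int_{\T^n}(|\nabla u(t)|^2 - L_0^2)_+^p = 0$ for every $t$; since we also control $\sup_t$ via the continuity of paths in $H^1$ and Sobolev embedding (recall $u\in C([0,t];C(\T^n))$ under our assumptions, or one argues on a countable dense set of times and uses continuity), this forces $|\nabla u(t,x)| \le L_0$ for a.e.\ $(t,x)$, a.s. The main technical care is in justifying the chain rule / It\^o formula for $\aniso(|\nabla u|^2)$: \Cref{cor:variational-sobolev-ito} gives an It\^o formula for $\|\nabla u\|_H^2$ but not immediately for a nonlinear functional, so one should either first prove the inequality for smooth approximations $u^\varepsilon$ of solutions of the viscous equation (where $\nabla u^\varepsilon$ is smooth enough to differentiate pointwise) and pass to the limit, or invoke a parabolic regularization/mollification in the $V'$-equation. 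I expect this regularization step --- making the formal computation rigorous at the level of regularity guaranteed by the variational theory, in particular controlling $\aniso''(|\nabla u|^2)$ terms which involve third derivatives of $u$ that are a priori only in a negative Sobolev space --- to be the main obstacle; the sign bookkeeping itself, while delicate, is essentially the computation already carried out for \Cref{prop:dirichlet} reweighted by $\aniso'(|\nabla u|^2)$, and one checks that the extra $\aniso''$-terms are also of the right sign or dominated.
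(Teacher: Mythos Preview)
Your approach is essentially the one the paper takes: apply the It\^o formula of \Cref{cor:variational-sobolev-ito} to a functional $\int_{\T^n} f(\nabla u)$ with $f$ a regularized convex penalization of large gradients, check that the drift has the right sign, and conclude that the energy stays at zero. The paper organizes this by first proving a general formula (\Cref{lem:energy_ito_formula}) for $\int_{\T^n} f(\nabla u)$ with arbitrary $f\in C^2$ with bounded Hessian, then specializing to $f(p)=g(\QQ[p])$ for convex increasing $g$ with $g'(1)\ge g(1)$ (\Cref{lem:energy_convex_integrand}), and finally choosing $g=g_M$ a $C^2$ modification of $(\sigma-M)_+$ with $M=\sqrt{1+L_0^2}$.

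Two small corrections to your outline. First, no limit $p\to\infty$ is needed: once $\E\int_{\T^n}(|\nabla u(t)|^2-L_0^2)_+^p=0$ for a single $p\ge 1$ you are done. More importantly, the hypothesis of the It\^o formula you invoke requires $\hess f$ bounded, which fails for $f(p)=(|p|^2-L_0^2)_+^p$ with $p>2$ and for $f(p)=|p|^{2p}$; the paper's choice of $g_M$ is linear at infinity precisely to keep $\hess f$ bounded. Second, your worry about third derivatives is unfounded: \Cref{cor:variational-sobolev-ito} is applied to the equation for $u$ (not $\nabla u$) with $F=F(p)$ depending only on the gradient variable, and the formula there already performs the integration by parts $-\divergence(\nabla_p F)\cdot v$ with $v\in L^2(\T^n)$, so only second derivatives of $u$ ever appear. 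This is exactly how \Cref{lem:energy_ito_formula} is proved, and it is the rigorous justification you were looking for; no further regularization of $u$ is needed.
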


\Cref{prop:dirichlet} and \Cref{prop:maximum-principle}  are proved at the end of this section. Both are based on an It\^o formula for integrals of the gradient of solutions. We summarize this calculation in the next lemma.
\begin{lemma}
  \label{lem:energy_ito_formula}
  Let $\varepsilon \ge 0$ and $u$ be a strong solution of \eqref{eq:smcf-viscous}.  For a function $f \in C^2(\R^n)$ with bounded second order derivatives and
  \begin{equation*}
    \mathcal{I}(t) \coloneqq \int_{\T^n} f(\nabla u(t)), ~t\in I
  \end{equation*}
  we obtain
  \begin{align*}
    \begin{split}
      \dif \mathcal{I} &= \int_{\T^n} -\varepsilon \hess f(\nabla u) \hess u : \hess u
      + \int_{\T^n} - \frac{1}{2} f(\nabla u) |\divv|^2 \\
      & + \int_{\T^n}  \hess u \left(\Id - \vv \otimes \vv\right) \\&\quad\quad\quad: \left(\frac{f(\nabla u)}{2 \QQ^2} \left(\Id - \vv\otimes \vv\right) -\hess f(\nabla u)  \right) \hess u \dt \\
      & - \int_{\T^n} f(\nabla u)\divv \dW.
    \end{split}
  \end{align*}

  \begin{proof}
    To abbreviate the calculations we will write $Q \coloneqq \QQ$ and $v \coloneqq \vv$. With this notation we have $\nabla Q = \nabla \left(\QQ \right) = \hess u \vv = \hess u v$.  We can apply \Cref{cor:variational-sobolev-ito} to infer
    \begin{align}
      \label{eq:lemma_dI}
      \begin{split}
        \dif \mathcal{I} &= \int_{\T^n} - \hess f(\nabla u) : \hess u \left( \varepsilon \lapl u + Q \divergence v + \frac{1}{2} v \cdot
          \nabla Q \right) \dt \\
        &\phantom{{}={}} + \int_{\T^n} \frac{1}{2} \nabla Q \cdot \hess f(\nabla u) \nabla Q \dt \\
        &\phantom{{}={}} + \int_{\T^n} \nabla f(\nabla u) \cdot \nabla Q \dW \eqqcolon \varepsilon \mu_{\text{viscous}} + \frac{1}{2} \mu_{\text{mcf}} + \frac{1}{2} \mu_{\text{pert}} \dt + \sigma \dW
      \end{split}
    \end{align}
    with
    \begin{align*}
      \mu_{\text{viscous}} &= \int_{\T^n} -\hess f(\nabla u) : \hess u \lapl u, \\
      \mu_{\text{mcf}} &= \int_{\T^n} -\hess f(\nabla u) : \hess u Q \divergence v, \\
      \mu_{\text{pert}} &= \int_{\T^n} -\hess f(\nabla u) : \hess u \lapl u + \nabla Q \cdot \hess f(\nabla u) \nabla Q, \\
      \sigma &= \int_{\T^n} \nabla f(\nabla u) \cdot \nabla Q.
    \end{align*}
    The term $\mu_{\text{viscous}}$ corresponds to the time derivative of $\mathcal{I}$ along solutions of the heat equation. It is weighted with $\varepsilon$ because it appears due to the additional viscosity added to the equation. The term $\mu_{\text{mcf}}$ corresponds to the time derivative of $\mathcal{I}$ along solutions of the unperturbed mean curvature flow of graphs. It is weighted with the factor $\frac{1}{2}$ because the other part has to be used in $\mu_{\text{pert}}$ to handle the additional terms coming from the perturbation. We handle $\mu_{\text{viscous}}$, $\mu_{\text{mcf}}$ and $\mu_{\text{pert}}$ separately using partial integrations and the periodicity of the functions. For $\mu_{\text{viscous}}$ we calculate
    \begin{align*}
      \mu_{\text{viscous}} &= \int_{\T^n} -\divergence \left( \nabla f(\nabla u) \right)  \lapl u \\
                           &= \int_{\T^n} - \hess f(\nabla u) \hess u : \hess u.
    \end{align*}
    For $\mu_{\text{mcf}}$ we calculate
    \begingroup
    \allowdisplaybreaks
    \begin{align*}
      \mu_{\text{mcf}} &= \int_{\T^n} - f(\nabla u) |\divergence v|^2 + \divergence v \left( f(\nabla u) \divergence v - Q \hess f(\nabla u) : \hess u \right) \\
                       &= \int_{\T^n} - f(\nabla u) |\divergence v|^2 + \divergence v \divergence \left( f(\nabla u)  v - Q \nabla f \right) \\
                       &= \int_{\T^n} - f(\nabla u) |\divergence v|^2 + \Dif v^T : \Dif \left( f(\nabla u) v - Q \nabla f \right) \\
                       &= \int_{\T^n} - f(\nabla u) |\divergence v|^2 \\ &\quad+ \int_{\T^{n}}\Dif v^T : \left( v \otimes \hess u \nabla f(\nabla u) + f(\nabla u) \Dif v - \nabla f \otimes \hess u v - Q \hess f \hess u \right) \\
                       &= \int_{\T^n} - f(\nabla u) |\divergence v|^2 + \Dif v^T : \left(f(\nabla u) \Dif v - Q \hess f \hess u \right) \\
                       &= \int_{\T^n} - f(\nabla u) |\divergence v|^2\\
                       &\quad+\int_{\T^{n}} \left( \hess u \left(\Id - v\otimes v\right) \right) : \left(\frac{f(\nabla u)}{Q^2} (\Id - v\otimes v) -\hess f  \right) \hess u. 
    \end{align*}
    For $\mu_{\text{pert}}$ we calculate
    \begin{align*}
      \mu_{\text{pert}} &= \int_{\T^n} -\divergence (\nabla f(\nabla u)) \lapl u + \nabla Q \cdot \hess f(\nabla u) \nabla Q \\
                        &=\int_{\T^n} -\Dif (\nabla f(\nabla u)) : \hess u + \nabla Q \cdot \hess f(\nabla u) \nabla Q \\
                        &=\int_{\T^n} - \hess u(\Id - v \otimes v) : \hess f(\nabla u) \hess u.
    \end{align*}
    \endgroup
    For $\sigma$ we calculate
    \begin{align*}
      \sigma &= \int_{\T^n} \nabla (f(\nabla u)) \cdot v = -\int_{\T^n} f(\nabla u) \divergence v.
    \end{align*}
    Inserting these calculations into \eqref{eq:lemma_dI} yields the result.
  \end{proof}
\end{lemma}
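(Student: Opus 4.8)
The plan is to apply the It\^o formula of \Cref{cor:variational-sobolev-ito} to the functional $\Psi\colon w\mapsto\int_{\T^n}f(w)$ along the process $w(t)=\nabla u(t)$, and then to transform the outcome into the stated identity by repeated integration by parts over the torus, where all boundary terms vanish. As in the proof of \Cref{thm:existence_viscous}, $\nabla u$ solves the variational SPDE \eqref{eq:smcf-viscous-gradient} in $V'=H^{-1}(\T^n;\R^n)$, driven by the one-dimensional noise $\nabla(\QQ)\,\dW=(\hess u\,\vv)\,\dW$. Since $\hess f$ is bounded and $u(t)\in H^2(\T^n)$ for a.e.\ $t$, the chain rule gives $\nabla f(\nabla u)\in H^1(\T^n;\R^n)$ with $\nabla\bigl(\nabla f(\nabla u)\bigr)=\hess f(\nabla u)\,\hess u\in L^2(\T^n)$ and $\divergence\bigl(\nabla f(\nabla u)\bigr)=\hess f(\nabla u):\hess u$; together with $u\in L^2(0,t;H^2(\T^n))$ this makes $\Psi$ (whose first and second derivatives are $h\mapsto\int_{\T^n}\nabla f(w)\cdot h$ and $(h,k)\mapsto\int_{\T^n}h\cdot\hess f(w)k$) admissible in \Cref{cor:variational-sobolev-ito} and makes the pairings below integrable in time. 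For $\varepsilon=0$ one just has a strong solution of \eqref{eq:smcf} and the argument is unchanged.

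Applying \Cref{cor:variational-sobolev-ito} gives $\dif\mathcal I=(\text{drift})\,\dt+(\text{It\^o correction})\,\dt+(\text{martingale})\,\dW$. Pairing $\nabla f(\nabla u)$ against the drift $\nabla\bigl(\varepsilon\lapl u+\QQ\divv+\tfrac12\vv\cdot\nabla(\QQ)\bigr)$ and integrating by parts once (via $\divergence(\nabla f(\nabla u))=\hess f(\nabla u):\hess u$) turns the drift into $\int_{\T^n}-\hess f(\nabla u):\hess u\,\bigl(\varepsilon\lapl u+\QQ\divv+\tfrac12\vv\cdot\nabla(\QQ)\bigr)$; the quadratic variation of $(\hess u\,\vv)\,\dW$ produces the It\^o correction $\tfrac12\int_{\T^n}\nabla(\QQ)\cdot\hess f(\nabla u)\nabla(\QQ)$; the martingale part is $\int_{\T^n}\nabla f(\nabla u)\cdot\nabla(\QQ)\,\dW$. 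Using $\nabla(\QQ)=\hess u\,\vv$, $\QQ\divv=\lapl u-\vv\cdot\hess u\,\vv$ and $\vv\cdot\nabla(\QQ)=\vv\cdot\hess u\,\vv$ one rewrites $\QQ\divv+\tfrac12\vv\cdot\nabla(\QQ)=\tfrac12\QQ\divv+\tfrac12\lapl u$, whence the drift plus the It\^o correction equals $\varepsilon\mu_{\mathrm{v}}+\tfrac12\mu_{\mathrm{m}}+\tfrac12\mu_{\mathrm{p}}$ with $\mu_{\mathrm{v}}=\int_{\T^n}-\hess f(\nabla u):\hess u\,\lapl u$, $\mu_{\mathrm{m}}=\int_{\T^n}-\hess f(\nabla u):\hess u\,\QQ\divv$, and $\mu_{\mathrm{p}}=\int_{\T^n}\bigl(-\hess f(\nabla u):\hess u\,\lapl u+\nabla(\QQ)\cdot\hess f(\nabla u)\nabla(\QQ)\bigr)$. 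Each of these and the martingale coefficient is then simplified separately.

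Three of the four are short. Rewriting $\hess f(\nabla u):\hess u=\divergence(\nabla f(\nabla u))$ and integrating by parts gives $\mu_{\mathrm{v}}=-\int_{\T^n}\hess f(\nabla u)\hess u:\hess u$; the same step on the first summand of $\mu_{\mathrm{p}}$ together with $\nabla(\QQ)=\hess u\,\vv$ gives $\mu_{\mathrm{p}}=-\int_{\T^n}\hess u(\Id-\vv\otimes\vv):\hess f(\nabla u)\hess u$; and $\int_{\T^n}\nabla f(\nabla u)\cdot\nabla(\QQ)=\int_{\T^n}\nabla\bigl(f(\nabla u)\bigr)\cdot\vv=-\int_{\T^n}f(\nabla u)\divv$. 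Here, and crucially for $\mu_{\mathrm{m}}$, the workhorse is the algebraic identity
\[
  \nabla\vv=\tfrac{1}{\QQ}(\Id-\vv\otimes\vv)\hess u,\qquad\text{so that}\qquad\divv=\tfrac{1}{\QQ}\,\hess u:(\Id-\vv\otimes\vv),
\]
which follows from $\vv=\nabla u/\QQ$ and $\nabla(\QQ)=\hess u\,\vv$.

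The real work, and the step I expect to be the main obstacle, is $\mu_{\mathrm{m}}=\int_{\T^n}-\hess f(\nabla u):\hess u\,\QQ\divv$. I would first use the product rule (the $\nabla f(\nabla u)\cdot\nabla(\QQ)$ terms cancelling) to get $\divergence\bigl(f(\nabla u)\vv-\QQ\,\nabla f(\nabla u)\bigr)=f(\nabla u)\divv-\QQ\,\hess f(\nabla u):\hess u$, so that the integrand of $\mu_{\mathrm{m}}$ becomes $\divv\,\divergence\bigl(f(\nabla u)\vv-\QQ\,\nabla f(\nabla u)\bigr)-f(\nabla u)|\divv|^2$. Integrating by parts via $\int_{\T^n}\divv\,\divergence\psi=\int_{\T^n}(\nabla\vv)^{T}:\nabla\psi$ and expanding $\nabla\bigl(f(\nabla u)\vv-\QQ\,\nabla f(\nabla u)\bigr)$ into four terms, one sees that the two cross terms $\vv\otimes\hess u\,\nabla f(\nabla u)$ and $-\,\nabla f(\nabla u)\otimes\hess u\,\vv$ cancel, since $(\nabla\vv)^{T}:\bigl(\vv\otimes\hess u\,\nabla f(\nabla u)\bigr)=(\nabla\vv)^{T}:\bigl(\nabla f(\nabla u)\otimes\hess u\,\vv\bigr)$ — this is precisely where the structure of $\nabla\vv$ above enters. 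What remains, $\int_{\T^n}-f(\nabla u)|\divv|^2+(\nabla\vv)^{T}:\bigl(f(\nabla u)\nabla\vv-\QQ\,\hess f(\nabla u)\hess u\bigr)$, becomes after substituting $\nabla\vv=\tfrac{1}{\QQ}(\Id-\vv\otimes\vv)\hess u$ once more exactly $\int_{\T^n}-f(\nabla u)|\divv|^2+\hess u(\Id-\vv\otimes\vv):\bigl(\tfrac{f(\nabla u)}{\QQ^2}(\Id-\vv\otimes\vv)-\hess f(\nabla u)\bigr)\hess u$. Assembling $\varepsilon\mu_{\mathrm{v}}+\tfrac12\mu_{\mathrm{m}}+\tfrac12\mu_{\mathrm{p}}$ for the $\dt$-part and $-\int_{\T^n}f(\nabla u)\divv$ for the $\dW$-part yields the asserted formula; the factor $\tfrac{1}{2\QQ^2}$ in the quadratic form comes from the $\tfrac12$ in front of $\mu_{\mathrm{m}}$, while the $-\hess f(\nabla u)$ there collects a $-\tfrac12\hess f(\nabla u)$ from each of $\tfrac12\mu_{\mathrm{m}}$ and $\tfrac12\mu_{\mathrm{p}}$. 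Beyond this cross-term cancellation, and the verification of the hypotheses of \Cref{cor:variational-sobolev-ito} (which uses the $H^2$-regularity of $u$ recorded above), the remaining manipulations are routine integrations by parts on the periodic domain.
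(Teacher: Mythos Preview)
Your proposal is correct and follows essentially the same route as the paper's own proof: you apply \Cref{cor:variational-sobolev-ito} with $F(z,p)=f(p)$, split the drift into the same three pieces $\varepsilon\mu_{\mathrm{v}}+\tfrac12\mu_{\mathrm{m}}+\tfrac12\mu_{\mathrm{p}}$, and handle each by integration by parts, including the key step for $\mu_{\mathrm{m}}$ of writing $f(\nabla u)\divv-\QQ\,\hess f(\nabla u):\hess u=\divergence(f(\nabla u)\vv-\QQ\,\nabla f(\nabla u))$, integrating by parts against $\divv$, and observing the cancellation of the two cross terms via the explicit form of $\nabla\vv$. The only cosmetic difference is that you phrase the It\^o formula as applied to $\Psi(w)=\int f(w)$ along $w=\nabla u$, whereas the paper invokes \Cref{cor:variational-sobolev-ito} directly for $F(z,p)=f(p)$; the content is identical.
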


We will next explore for which choices of $f$ \Cref{lem:energy_ito_formula} yields a control on appropriate quantities. We therefore choose $f$ as a function of $\QQ$, which gives more geometric meaning to the estimates and is still sufficient to obtain the required estimates, see the remarks below.

\begin{lemma}
  \label{lem:energy_convex_integrand}
  Let $\varepsilon \ge 0$ and $u$ be a strong solution of \eqref{eq:smcf-viscous}.  Let $g \in C^2([1,\infty))$ be a non-negative, monotone increasing and convex function with bounded second order derivative and $g'(1) - g(1) \ge 0$ and
  \begin{align*}
    \mathcal{I}(t) = \int_{\T^n} g(\QQu), ~t \in I.
  \end{align*}
  For $q\in [1,2]$ we have that
  \begin{align*}
    \E &\mathcal{I}(t)^q + \varepsilon q\E\int_0^t  \mathcal{I}(s)^{q-1} \int_{\T^n} g''(\QQu[s]) |\hess u(s) \vvu[s]|^2 \ds \\
       &+ \varepsilon q \E\int_0^t \mathcal{I}(s)^{q-1} \int_{\T^n} \frac{g'(\QQu[s])}{\QQu[s]} \left(|\hess u(s)|^2 -  |\hess u(s) \vvu[s]|^2 \right) \ds \\
       &+ \frac{2q-q^2}{2} \E \int_0^t \mathcal{I}(s)^{q-1} \int_{\T^n} g(\QQu[s]) |\divvu[s]|^2 \ds \\
       &+ q \E \int_0^t \mathcal{I}(s)^{q-1} \int_{\T^n} \left(\frac{g'(\QQu[s])}{\QQu[s]} - \frac{g(\QQu[s])}{2\QQu[s]^2}\right) \\
       &\phantom{+ q \E \int_0^t \mathcal{I}(s)^{q-1}} ~\cdot \left(|\hess u|^2 - 2 |\hess u \vv|^2 + |\vv \cdot \hess u \vv|^2 \right)(s) \ds \\
       &+ q \E \int_0^t \mathcal{I}(s)^{q-1}\int_{\T^n} g''(\QQu[s]) \\
       &\phantom{+ q \E \int_0^t \mathcal{I}(s)^{q-1}}\cdot\left( |\hess u \vv|^2 - |\vv \cdot \hess u \vv|^2 \right)(s) \ds \\
       &\le \E \mathcal{I}(0)^q ~\forall t \in I.
  \end{align*}

  Furthermore, there is a constant $C > 0$ such that for $q \in [1,2)$
  \begin{align*}
    \E \sup_{t \in I} \mathcal{I}(t)^q \le \left(2 +  \frac{2C^2}{2q - q^2}\right) \E \mathcal{I}(0)^q.
  \end{align*}
\end{lemma}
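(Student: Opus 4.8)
The plan is to apply \Cref{lem:energy_ito_formula} with the specific integrand $f(p) \coloneqq g(\QQ[p])$ and then to apply the one–dimensional It\^o formula to the real–valued process $t \mapsto \mathcal{I}(t)^q$. As a preliminary step I would record, using $\nabla_p \QQ[p] = \vv[p]$ and $\hess[p]\QQ[p] = \QQ[p]^{-1}\bigl(\Id - \vv[p]\otimes\vv[p]\bigr)$, that the chain rule yields $\nabla f(p) = g'(\QQ[p])\,\vv[p]$ and
\begin{equation*}
  \hess f(p) = g''(\QQ[p])\,\vv[p]\otimes\vv[p] \;+\; \frac{g'(\QQ[p])}{\QQ[p]}\bigl(\Id - \vv[p]\otimes\vv[p]\bigr).
\end{equation*}
Since $g''$ is bounded and $g'$ therefore grows at most linearly, $g'(\QQ)/\QQ$ is bounded on $[1,\infty)$, so $f\in C^2(\R^n)$ has bounded second derivatives and \Cref{lem:energy_ito_formula} is applicable.

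The heart of the matter is to substitute this $\hess f$ into the three $\dt$–integrands appearing in \Cref{lem:energy_ito_formula} and to regroup. Here I would use the elementary identities, valid for any symmetric matrix $\hess u$ with $P \coloneqq \Id - \vv\otimes\vv$,
\begin{align*}
  (\vv\otimes\vv)\hess u : \hess u &= |\hess u\,\vv|^2, & P\,\hess u : \hess u &= |\hess u|^2 - |\hess u\,\vv|^2, \\
  \hess u\,P : (\vv\otimes\vv)\hess u &= |\hess u\,\vv|^2 - |\vv\cdot\hess u\,\vv|^2, & \hess u\,P : P\,\hess u &= |\hess u|^2 - 2|\hess u\,\vv|^2 + |\vv\cdot\hess u\,\vv|^2,
\end{align*}
where $P$ is positive definite because $|\vv| < 1$, so the last quantity equals $\bigl|P^{1/2}\hess u\,P^{1/2}\bigr|^2\ge 0$. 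After this bookkeeping one finds
\begin{equation*}
  \dif\mathcal{I} = -\Bigl[\varepsilon R_\varepsilon + R_{\mathrm{mcf}} + \tfrac12\int_{\T^n}g(\QQ)\,|\divv|^2\Bigr]\dt \;-\; \Bigl(\int_{\T^n}g(\QQ)\,\divv\Bigr)\dW,
\end{equation*}
with $R_\varepsilon = \int_{\T^n}g''(\QQ)|\hess u\,\vv|^2 + \int_{\T^n}\tfrac{g'(\QQ)}{\QQ}\bigl(|\hess u|^2 - |\hess u\,\vv|^2\bigr)$ and
\begin{equation*}
  R_{\mathrm{mcf}} = \int_{\T^n}\Bigl(\tfrac{g'(\QQ)}{\QQ} - \tfrac{g(\QQ)}{2\QQ^2}\Bigr)\bigl(|\hess u|^2 - 2|\hess u\,\vv|^2 + |\vv\cdot\hess u\,\vv|^2\bigr) + \int_{\T^n}g''(\QQ)\bigl(|\hess u\,\vv|^2 - |\vv\cdot\hess u\,\vv|^2\bigr).
\end{equation*}
Multiplying the drift by $q\mathcal{I}^{q-1}$ reproduces exactly the five integrands appearing in the claimed inequality.

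Next I would verify that every summand of $R_\varepsilon$ and $R_{\mathrm{mcf}}$ is non-negative: $g',g''\ge 0$ and $\QQ\ge 1$; $|\hess u|^2 \ge |\hess u\,\vv|^2 \ge |\vv\cdot\hess u\,\vv|^2$ since $|\vv|\le 1$; and $|\hess u|^2 - 2|\hess u\,\vv|^2 + |\vv\cdot\hess u\,\vv|^2$ is a square by the identity above. The only genuinely nontrivial sign is $\tfrac{g'(\QQ)}{\QQ} - \tfrac{g(\QQ)}{2\QQ^2}\ge 0$, i.e.\ $\psi(t)\coloneqq 2tg'(t)-g(t)\ge 0$ for $t\ge 1$; this follows from $\psi(1)=2g'(1)-g(1)\ge g(1)\ge 0$ (here the hypothesis $g'(1)-g(1)\ge 0$ enters, and this is its only use) together with $\psi'(t)=g'(t)+2tg''(t)\ge 0$. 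I expect the regrouping of the previous paragraph, together with this sign check, to be the main obstacle of the proof.

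Finally, applying It\^o's formula to $\mathcal{I}^q$ — for $q\in(1,2)$ after the standard regularization $x\mapsto(x^2+\delta)^{q/2}$ with $\delta\downarrow 0$, since $g(1)$ may vanish and hence $\mathcal{I}$ may touch zero — produces the It\^o correction $\tfrac12 q(q-1)\mathcal{I}^{q-2}\bigl(\int_{\T^n}g(\QQ)\divv\bigr)^2\dt$. By Cauchy--Schwarz $\bigl(\int_{\T^n}g(\QQ)\divv\bigr)^2\le \mathcal{I}\int_{\T^n}g(\QQ)|\divv|^2$, so this correction is at most $\tfrac12 q(q-1)\mathcal{I}^{q-1}\int_{\T^n}g(\QQ)|\divv|^2$, which combined with the $-\tfrac q2\mathcal{I}^{q-1}\int_{\T^n}g(\QQ)|\divv|^2$ coming from $q\mathcal{I}^{q-1}\dif\mathcal{I}$ leaves the coefficient $-\tfrac{2q-q^2}{2}$, matching the third integrand of the statement. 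Localizing the stochastic integral by $\tau_R\coloneqq\inf\{s:\mathcal{I}(s)>R\}$, taking expectations so the martingale term drops, and passing $R\to\infty$ via Fatou yields the first inequality. For the supremum bound I would use that the total drift of $\mathcal{I}^q$ is $\le -\tfrac{2q-q^2}{2}\mathcal{I}^{q-1}\int_{\T^n}g(\QQ)|\divv|^2\le 0$, so $\mathcal{I}^q(t)\le\mathcal{I}^q(0)+M(t)$ with $M$ a local martingale of quadratic variation bounded by $q^2\int_0^t\mathcal{I}^{2q-1}\int_{\T^n}g(\QQ)|\divv|^2\,\ds$; then Burkholder--Davis--Gundy, the splitting $\mathcal{I}^{2q-1}=\mathcal{I}^q\,\mathcal{I}^{q-1}$, Young's inequality to absorb $\tfrac12\E\sup_{[0,t]}\mathcal{I}^q$, and the bound $\E\int_0^t\mathcal{I}^{q-1}\int_{\T^n}g(\QQ)|\divv|^2\le\tfrac{2}{2q-q^2}\E\mathcal{I}(0)^q$ obtained just above give $\E\sup_{t\in I}\mathcal{I}(t)^q\le\bigl(2+\tfrac{2C^2}{2q-q^2}\bigr)\E\mathcal{I}(0)^q$.
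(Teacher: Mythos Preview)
Your proposal is correct and follows essentially the same route as the paper: apply \Cref{lem:energy_ito_formula} with $f=g\circ\QQ$, expand $\hess f = g''\,\vv\otimes\vv + \tfrac{g'}{\QQ}(\Id-\vv\otimes\vv)$ to identify the drift terms, verify the key sign $\tfrac{g'}{\QQ}-\tfrac{g}{2\QQ^2}\ge 0$ via the hypothesis $g'(1)-g(1)\ge 0$ and convexity, then apply It\^o to $\mathcal{I}^q$ (regularized near zero), bound the correction by Cauchy--Schwarz, localize and use Fatou, and finish the supremum estimate with BDG plus Young. The only cosmetic differences are your regularization $(x^2+\delta)^{q/2}$ versus the paper's $(\vartheta+x)^q$, and your monotonicity argument for $\psi(t)=2tg'(t)-g(t)$ versus the paper's for $tg'(t)-g(t)$; both are equivalent here.
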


\begin{remark}\
  \begin{enumerate}[(i)]
  \item Note that all terms on the left hand side in \Cref{lem:energy_convex_integrand} are non-negative. Especially non-negativity of $|\hess u|^2 - 2 |\hess u \vv |^2 + |\vv \cdot \hess u \vv|^2$ can be deduced from
  \begin{align*}
    |\hess u|^2 &- 2 |\hess u \vv |^2 + |\vv \cdot \hess u \vv|^2 \\
                &= Q^2 \Dif \left(\vv\right) : \Dif \left(\vv\right)^T\\
                &= \left(\Id - \vv\otimes \vv\right)\hess u : \hess u \left(\Id - \vv\otimes\vv\right)
  \end{align*}
  and \Cref{prop:sp_matrices}. The term $\Dif \left(\vv\right) : \Dif \left(\vv\right)^T$ is the squared norm of the second fundamental form of the graph of $u$. Hence \Cref{lem:energy_convex_integrand} yields a bound for this geometric quantity, see also \Cref{rem:geometric_energy} below.
  \item The condition $g'(1)-g(1) \ge 0$ in \Cref{lem:energy_convex_integrand} is not very restrictive since one can subtract a constant from $g$ and use the fact that $\dif \mathcal{I} = \dif \, (\mathcal{I} - \text{const})$.
  \end{enumerate}
\end{remark}

\begin{proof}[Proof of \Cref{lem:energy_convex_integrand}]
  Let $f(p) \coloneqq g(\QQ[p])$ for $p \in \R^n$. Then
  \begin{align*}
    \nabla f(p) &= g'(\QQ[p]) \vv[p], \\
    \hess f(p) &= g''(\QQ[p]) \vv[p] \otimes \vv[p] + g'(\QQ[p]) \frac{\Id - \vv[p] \otimes \vv[p]}{\QQ[p]} ~\forall p\in\R^n.
  \end{align*}
  Since $g''$ is bounded we infer that $g'$ grows at most linearly and therefore $\hess f$ is bounded. Furthermore, we calculate
  \begin{align}
    \label{eq:lemma_energy_convex_matrix}
    \begin{split}
      \hess f(p) &- \frac{f(p)}{2\QQ[p]^2} \left(\Id - \vv[p]\otimes\vv[p] \right) \\
      &= \left(\frac{g'(\QQ[p])}{\QQ[p]} - \frac{g(\QQ[p])}{2\QQ[p]^2}\right) \Id \\
      &\phantom{{}={}}+ \left(g''(\QQ[p]) - \frac{g'(\QQ[p])}{\QQ[p]} + \frac{g(\QQ[p])}{2\QQ[p]^2}\right)\vv[p]\otimes\vv[p].
      \end{split}
  \end{align}
  Note that
  \begin{align*}
    \od{}{\sigma} \left(g'(\sigma)\sigma - g(\sigma)\right) = g''(\sigma)\sigma \ge 0 ~\forall \sigma \in (1,\infty).
  \end{align*}
  Thus $\sigma \mapsto g'(\sigma)\sigma - g(\sigma)$ is an increasing function with $g'(1) - g(1) \ge 0$.
  
  Now the eigenvalues of \eqref{eq:lemma_energy_convex_matrix} are given by
  \begin{gather*}
    \frac{g'(\QQ[p])}{\QQ[p]} - \frac{g(\QQ[p])}{2\QQ[p]^2} \ge 0 \text{ and } \\
    \frac{g'(\QQ[p])}{\QQ[p]^{3}} - \frac{g(\QQ[p])}{2\QQ[p]^4} + g''(\QQ[p]) \frac{|p|^2}{\QQ[p]^2} \ge 0
  \end{gather*}
  which shows the non-negativity of \eqref{eq:lemma_energy_convex_matrix}.
  We will again use the notation $Q = \QQ$ and $v = \vv$.  We can apply \Cref{lem:energy_ito_formula} to $\mathcal{I}(t)$ and deduce
  \begin{align*}
    \dif \mathcal{I} &= \int_{\T^n} - \varepsilon \left(g''(Q) |\hess u v|^2 + \frac{g'(Q)}{Q} \left(|\hess u|^2 - |\hess u v|^2 \right) \right)\\
                     & +  \int_{\T^n} - \frac{1}{2} g(Q) |\divergence v|^2  - \int_{\T^n}  \hess u \left(\Id - v\otimes v\right)\\
                     &\qquad : \left(\left(\frac{g'(Q)}{Q} - \frac{g(Q)}{2Q^2}\right) (\Id - v \otimes v)  + g''(Q) v \otimes v\right) \hess u \dt \\
           & - \int_{\T^n} g(Q)\divergence v \dW.
  \end{align*}
  Because of the non-negativity of \eqref{eq:lemma_energy_convex_matrix} and \Cref{prop:sp_matrices}, $\mathcal{I}$ is a non-negative local supermartingale. We can apply Fatou's Lemma to get rid of the locality and deduce that
  \begin{align*}
    &\E  \int_{\T^n} g(Q(t)) \\
       &+ \varepsilon \E\int_0^t \int_{\T^n} \left(g''(Q(s)) |\hess u v|^2(s) + \frac{g'(Q(s))}{Q(s)} \left(|\hess u(s)|^2 - |\hess u v|^2(s) \right) \right) \ds \\
       &+ \frac{1}{2} \E \int_0^t \int_{\T^n} g(Q(s)) |\divergence v(s)|^2 \ds \\
       &+ \E \int_0^t \int_{\T^n} \left(\frac{g'(Q(s))}{Q(s)} - \frac{g(Q(s))}{2Q(s)^2}\right) \left(|\hess u|^2 - 2 |\hess u v|^2 + |v \cdot \hess u v|^2 \right)(s) \ds \\
       &+ \E \int_0^t \int_{\T^n} g''(Q(s)) \left( |\hess u v|^2 - |v \cdot \hess u v|^2 \right)(s) \ds \\
       &\le \E\int_{\T^n} g(Q(0)) ~\forall t \in I.
  \end{align*}
  
  Now, for $q \in [1,2]$ we want to use the It\^o formula for the function $x \mapsto |x|^q$. This function is not twice continuously differentiable for $q < 2$, so the classical It\^o formula does not apply directly.
  Nevertheless, we can first do the calculations for $\vartheta > 0$ and the function $x \mapsto \left(\vartheta + x\right)^q$ which is twice continuously differentiable on $[0,\infty)$ and then send $\vartheta \to 0$. We infer
  \begin{align*}
    \dif \mathcal{I}^q &= -\varepsilon q\mathcal{I}^{q-1} \int_{\T^n} \left(g''(Q) |\hess u v|^2 + \frac{g'(Q)}{Q} \left(|\hess u|^2 - |\hess u v|^2 \right) \right) \\
             &\phantom{{}={}} - \frac{q}{2} \mathcal{I}^{q-1} \int_{\T^n}  g(Q) |\divergence v|^2 \\
                       &\phantom{{}={}} -q \mathcal{I}^{q-1} \int_{\T^n}  \hess u \left(\Id - v\otimes v\right) \\
                       &\quad\quad\quad\quad\quad\quad: \left(\left(\frac{g'(Q)}{Q} - \frac{g(Q)}{2Q^2}\right) (\Id - v \otimes v)  + g''(Q) v \otimes v\right) \hess u \dt \\
             &\phantom{{}={}} + \frac{q(q-1)}{2} \mathcal{I}^{q-2} \left( \int_{\T^n} g(Q) \divergence v \right)^2 \dt \\
             &\phantom{{}={}} -q \mathcal{I}^{q-1} \int_{\T^n} g(Q)\divergence v \dW \\
             &\le -\varepsilon q\mathcal{I}^{q-1} \int_{\T^n} \left(g''(Q) |\hess u v|^2 + \frac{g'(Q)}{Q} \left(|\hess u|^2 - |\hess u v|^2 \right) \right) \\
             &\phantom{{}={}}+ \left(-\frac{q}{2}+\frac{q(q-1)}{2}\right) \mathcal{I}^{q-1} \int_{\T^n}  g(Q) |\divergence v|^2 \\
                       &\phantom{{}={}} -q \mathcal{I}^{q-1} \int_{\T^n}  \hess u \left(\Id - v\otimes v\right) \\
    &\quad\quad\quad\quad\quad\quad: \left(\left(\frac{g'(Q)}{Q} - \frac{g(Q)}{2Q^2}\right) (\Id - v \otimes v)  + g''(Q) v \otimes v\right) \hess u \dt \\
             &\phantom{{}={}} -q \mathcal{I}^{q-1} \int_{\T^n} g(Q)\divergence v \dW. \\
  \end{align*}
  As before, since the stochastic integral defines a local martingale and using Fatou's lemma, we get
  \begin{align*}
    &\E \left(\mathcal{I}(t)^q\right) \\
    &+ \varepsilon q \E\int_0^t  \mathcal{I}(s)^{q-1} \\
    &\quad\cdot\int_{\T^n} \left(g''(Q(s)) |\hess u v|^2(s) + \frac{g'(Q(s))}{Q(s)} \left(|\hess u(s)|^2 - |\hess u v|^2(s) \right) \right) \ds \\
    &+ \frac{2q-q^2}{2} \E \int_0^t \mathcal{I}(s)^{q-1} \int_{\T^n} g(Q(s)) |\divergence v(s)|^2 \ds \\
    &+ q \E \int_0^t \mathcal{I}(s)^{q-1} \int_{\T^n} \left(\frac{g'(Q(s))}{Q(s)} - \frac{g(Q(s))}{2Q(s)^2}\right) \\
    &\quad\qquad\quad\qquad\qquad\cdot\left(|\hess u|^2 - 2 |\hess u v|^2 + |v \cdot \hess u v|^2 \right)(s) \ds \\
    &+ q \E \int_0^t \mathcal{I}(s)^{q-1}\int_{\T^n} g''(Q(s)) \left( |\hess u v|^2 - |v \cdot \hess u v|^2 \right)(s) \ds \\
    &\le \E\left( \mathcal{I}(0)^q\right) ~\forall t \in I.
  \end{align*}
  For the stochastic integral we can apply the Burkholder-Davis-Gundy inequality
  \begin{align*}
    \E \sup_{t \in I} &\left[ \int_0^t \mathcal{I}(s)^{q-1} \int_{\T^n} g(Q(s)) \divergence v(s) \dW(s) \right] \\
                          &\le C \E \left[ \int_0^{\sup I} \left( \mathcal{I}(t)^{q-1} \int_{\T^n} g(Q(t)) \divergence v(t) \right)^2 \dt \right]^{\frac{1}{2}} \\
                          &\le C \E \left[ \int_0^{\sup I} \left(\mathcal{I}(t)^{2q-1} \int_{\T^n} g(Q(t)) |\divergence v(t)|^2\right) \dt \right]^{\frac{1}{2}} \\
                          &\le C \E \left[ \sup_{t \in I} \mathcal{I}^q(t) \int_0^{\sup I} \left( \mathcal{I}(t)^{q-1} \int_{\T^n} g(Q(t)) | \divergence v(t)|^2 \right) \dt \right]^{\frac{1}{2}} \\
                          &\le \frac{C\delta}{2} \E \sup_{t \in I} \mathcal{I}(t)^q + \frac{C}{2\delta} \E \int_0^{\sup I} \left( \mathcal{I}(t)^{q-1}\int_{\T^n} g(Q(t)) |\divergence v(t)|^2 \right) \dt \\
                          &\le \frac{C\delta}{2} \E \sup_{t \in I} \mathcal{I}(t)^q + \frac{C}{\delta(2q - q^2)} \E\left(\mathcal{I}(0)^q\right)
  \end{align*}
  for any $\delta > 0$.  Thus
  \begin{align*}
    \E \sup_{t \in I} \mathcal{I}(t)^q \le \E  \mathcal{I}(0)^q + \frac{C\delta}{2} \E \sup_{t \in I} \mathcal{I}(t)^q + \frac{C}{\delta(2q - q^2)} \E \mathcal{I}(0)^q.
  \end{align*}
  Now choose $\delta = \frac{1}{C}$ to infer
  \begin{align*}
    \E \sup_{t \in I} \mathcal{I}(t)^q \le \left(2 +  \frac{2C^2}{2q - q^2}\right) \E \mathcal{I}(0)^q.
  \end{align*}
\end{proof}

After \Cref{lem:energy_convex_integrand} has been established we can apply it to prove \Cref{prop:dirichlet} and \Cref{prop:maximum-principle}.

\begin{proof}[Proof of \Cref{prop:dirichlet}]
  In the situation of \Cref{lem:energy_convex_integrand} we choose $g(r) = r^2$, hence
  \begin{equation*}
    g(\QQ[p]) = 1 + |p|^2.
  \end{equation*}
  Note that $\dif \int_{\T^n} |\nabla u|^2 = \dif \int_{\T^n} g(\QQ)$.  Then by \Cref{lem:energy_convex_integrand} for $q=1$,
  \begin{align*}
    \E \| &\nabla u(t) \|_{L^2(\T^n)}^{2} + 2 \varepsilon \E\int_0^t \int_{\T^n} |\hess u(s)|^2 \ds \\
                                         &+ \frac{1}{2} \E \int_0^t \int_{\T^n} \QQ[\nabla u(s)]^2 |\divvu[s]|^2 \ds \\
                                         &+ \E \int_0^t \int_{\T^n} \left(\frac{3}{2}|\hess u|^2 - |\hess u \vv|^2 - \frac{1}{2} |\vv \cdot \hess u \vv |^2 \right)(s) \ds \\
                                         &\le \E \| \nabla u_0 \|_{L^2(\T^n)}^2 ~\forall t \in I.
  \end{align*}
  Furthermore there is a constant $C > 0$ such that for $q \in [1,2)$
  \begin{align*}
    \E \sup_{t \in I} \| \nabla u(t) \|_{L^2(\T^n)}^{2q} \le \left(2 +  \frac{2C^2}{2q - q^2}\right) \E  \| \nabla u_0 \|_{L^2(\T^n)}^{2q}.
  \end{align*}
  Now let $\esssup[\P] \| \nabla u \|_{L^\infty(I;L^\infty(\T^n))} = L < \infty$. Then we can estimate
  \begin{equation*}
    |\vv| = \frac{|\nabla u|}{\QQ} \le \frac{L}{\sqrt{1+L^2}} < 1
  \end{equation*}
  and
  \begin{align*}
    3|\hess u|^2 &- 2|\hess u \vv|^2 -  |\vv \cdot \hess u \vv |^2 \\
    &\ge \left(3 - 2 |\vv|^2 - |\vv|^4\right) |\hess u|^2 \\
    &\ge \frac{3(1+L^2)^2 - 2L^2(1+L^2) - L^4}{(1+L^2)^2} |\hess u|^2 \\
    &= \frac{3 + 4L^2}{(1+L^2)^2} |\hess u|^2.
  \end{align*}
  Hence
  \begin{align*}
    \E \| \nabla u(t) \|_{L^2(\T^n)}^{2} &+ \frac{3 + 4L^2}{2(1+L^2)^2} \E \int_0^t \int_{\T^n} |\hess u(s)|^2 \ds \\
                                         &\le \E \| \nabla u_0 \|_{L^2(\T^n)}^2 ~\forall t \in I.
  \end{align*}
\end{proof}

As the next step, we establish the maximum principle.
\begin{proof}[Proof of \Cref{prop:maximum-principle}]
  For $M > 0$ let $g_M \in C^2([1,\infty))$ be a modification of $\sigma \mapsto (\sigma - M)_+$ with the following properties:
  \begin{itemize}
  \item $g_M \ge 0$,
  \item $g_M$ monotone increasing and convex,
  \item $g_M''$ bounded,
  \item $g_M'(1) - g_M(1) \ge 0$ and
  \item $g_M(\sigma) > 0 \Leftrightarrow \sigma > M$.
  \end{itemize}
  For example, one could choose
  \begin{align*}
      g_M(\sigma) \coloneqq \left\{
    \begin{array}{ll}
      0 & \sigma \le M, \\
      (\sigma-M)^2 & \sigma \in (M, M+1), \\
      2 \sigma -  2M - 1 & \sigma \in [M+1, \infty).
    \end{array}
                           \right.
  \end{align*}
  From \Cref{lem:energy_convex_integrand} we deduce that
  \begin{align*}
    \E \int_{\T^n} g_M(\QQu) \le \E \int_{\T^n} g_M(\QQu[0]) ~\forall t \in I.
  \end{align*}
  Now, if $\| \nabla u_0 \|_{L^\infty(\T^n)} \le L$ $\P$-a.s. then we can conclude that $$\| \QQu[0] \|_{L^\infty(\T^n)} \le \sqrt{1+L^2}\ \P\text{-a.s.}$$ Hence
  \begin{align*}
    \E \int_{\T^n} g_{\sqrt{1+L^2}}(\QQu) = 0 ~\forall t \in I,
  \end{align*}
  which implies $\| \QQu \|_{L^\infty(\T^n)} \le \sqrt{1+L^2}$ $\P$-a.s. for all $t \in I$. Therefore
  \begin{align*}
    \| \nabla u \|_{L^\infty(I; L^\infty(\T^n))} \le L.
  \end{align*}
\end{proof}

\begin{remark}
  \label{rem:geometric_energy}
  We can also use \Cref{lem:energy_convex_integrand} for $g(r) = r$ to deduce bounds for the $q$-th moment of the area. In particular for $q = 1$ and $\varepsilon = 0$ we get
  \begin{align}
    \label{eq:geometric-energy}
    \begin{split}
    \E \int_{\T^n} \QQu[t] &+  \frac{1}{2} \E \int_0^t  \int_{\T^n} \QQu[s] \left| \divvu[s] \right|^2 \ds \\
                           &+ \frac{1}{2} \E \int_0^t  \int_{\T^n} \QQu[s]  \Dif \vvu[s] : \Dif \vvu[s]^T \ds \\
                           &\le \E \int_{\T^n} \QQu[0].
                         \end{split}
  \end{align}
  In geometrical terms \eqref{eq:geometric-energy} becomes
  \begin{align*}
    \E \mathcal{H}^n\left(\Gamma(t) \right) + \frac{1}{2} \E \int_0^t \int_{\Gamma(t)} \left( H^2(s) + |A(s)|^2 \right) \, \dif\mathcal{H}^n \ds \le \E \mathcal{H}^n\left(\Gamma(0)\right),
  \end{align*}
  where $\Gamma(t) = \graph u(t)$, $H(t)$ is the mean curvature and $|A(t)|$ is the length of the second fundamental form of $\Gamma(t)$ for $t \in I$. Compare this with the deterministic MCF, where for a solution $(\Gamma(t))_{t \ge 0}$ the natural energy identity is 
  \begin{align*}
    \mathcal{H}^n\left(\Gamma(t)\right) + \int_0^t \int_{\Gamma(t)} H^2(s) \, \dif\mathcal{H}^n \ds = \mathcal{H}^n\left(\Gamma(0)\right).
  \end{align*}
  However, we will not use this estimate since an $L^\infty$ bound for the gradient and an $L^2$ bound for the Hessian are available via \Cref{prop:maximum-principle} and \Cref{prop:dirichlet}.
\end{remark}

\section{Vanishing viscosity limit}
  
With the above uniform estimates at hand, we are in position to pass to the limit as $\varepsilon\to 0$ and establish the existence of a martingale solution to the stochastic mean curvature flow  \eqref{eq:smcf}.
  
  \label{sec:vanishing-viscosity}
  \begin{proof}[Proof of \Cref{thm:main}]
    From \Cref{thm:existence_viscous} we deduce that for $\varepsilon > 0$ we can find a martingale solutions $u^\varepsilon$ of \eqref{eq:smcf-viscous} with initial data $\Lambda$. Since the solutions $u^\varepsilon$ are constructed with \cite[Theorem 2]{Jakubowski1997} we can fix one probability space $(\Omega, \F, \P)$ = $([0,1], \mathcal{B}([0,1]), \mathcal{L})$ such that for each $\varepsilon > 0$ we can find
    \begin{itemize}
    \item a normal filtration $(\F^\varepsilon_t)_{t \in [0,\infty)}$,
    \item a real-valued $(\F^\varepsilon_t)$-Wiener process $W^\varepsilon$ and
    \item a $(\F^\varepsilon_t)$-predictable process $u^\varepsilon$ with $u^\varepsilon \in L^2(\Omega; L^2(0,T;H^2(\T^n)))$ for all $T \in [0,\infty)$
    \end{itemize}
    such that
    \begin{align}
      \label{eq:epsilon-du}
      \begin{split}
      u^\varepsilon(t) - u^\varepsilon(0) &= \int_0^t \varepsilon \lapl u^\varepsilon(s) + \QQ[\nabla u^\varepsilon(s)] \divv[\nabla u^\varepsilon(s)] \ds \\
      &\phantom{{}={}} + \int_0^t \QQ[\nabla u^\varepsilon(s)] \circ \dW^\varepsilon_s \text{ in } L^2(\T^n) ~\forall t \in [0,\infty)
      \end{split}
    \end{align}
    and $\P \circ (u^\varepsilon(0))^{-1} = \Lambda$.
    Because of the assumption on the support of $\Lambda$ we have $\| \nabla u^\varepsilon(0) \|_{L^\infty(\T^n)} \le L$ $\P$-a.s.
    From \Cref{prop:maximum-principle} we deduce
    \begin{align*}
      \| \nabla u^\varepsilon \|_{L^\infty(0,\infty;L^\infty(\T^n))} \le L ~\P\text{-a.s.}
    \end{align*}
    and from \Cref{prop:dirichlet} we deduce for all $q \in [1,2)$
    \begin{align}
      \label{eq:epsilon-bounds}
      \begin{split}
      \| \nabla u^\varepsilon \|_{L^{2q}(\Omega; C([0,\infty);L^2(\T^n)))}^{2q} &\le \left(2 + \frac{2C^2}{2q - q^2} \right)  \E \| \nabla u^\varepsilon(0) \|_{L^2(\T^n)}^{2q} \le C_{q,L} \text{ and } \\
      \| \hess u^\varepsilon \|_{L^2(\Omega; L^2(0,\infty;L^2(\T^n)))} &\le C_L \| u^\varepsilon(0) \|_{L^2(\Omega;H^1(\T^n))}.
      \end{split}
    \end{align}
    Using \Cref{cor:variational-sobolev-ito} we infer that 
    \begin{align*}
      \dif \| u^\varepsilon \|_{L^2(\T^n)}^2 &= \int_{\T^n} 2 u^\varepsilon \\
                                             &\qquad\cdot\left( \varepsilon \lapl u^\varepsilon + \QQ[\nabla u^\varepsilon] \divv[\nabla u^\varepsilon] + \frac{1}{2} \vv[\nabla u^\varepsilon] \cdot \hess u^\varepsilon \vv[\nabla u^\varepsilon] \right) \dt \\
                                             &\phantom{{}={}} + \int_{\T^n} \QQ[\nabla u^\varepsilon]^2 \dt  + 2 \int_{\T^n} u^\varepsilon \QQ[\nabla u^\varepsilon] \dW \\
                                             &\le C \| u^\varepsilon \|_{L^2(\T^n)}^2 + C \| \hess u^\varepsilon \|_{L^2(\T^n)}^2 \dt + 2 \int_{\T^n} u^\varepsilon \QQ[\nabla u^\varepsilon] \dW,
    \end{align*}
    with a constant $C$ that does not depend on $\varepsilon$. We can estimate the supremum with the Burkholder-Davis-Gundy inequality and get
    \begin{align*}
      &\E \sup_{s \in [0,t]} \| u^\varepsilon(s) \|_{L^2(\T^n)}^2 - \E \| u^\varepsilon(0)\|_{L^2(\T^n)}^2 \\
      &\le C \E \int_0^t \| u^\varepsilon(s) \|_{L^2(\T^n)}^2 \ds + C \E \int_0^t \| \hess u^\varepsilon(s) \|_{L^2(\T^n)}^2 \\
                                                                                                          &\phantom{{}={}} + C \E \left[ \int_0^t \| u^\varepsilon(s) \|_{L^2(\T^n)}^2 \| \QQ[\nabla u^\varepsilon(s) ] \|_{L^2(\T^n)}^2 \ds \right]^{\frac{1}{2}} \\
                                                                                                          &\le C \E \int_0^t \| u^\varepsilon(s) \|_{L^2(\T^n)}^2 \ds + C \E \int_0^t \| \hess u^\varepsilon(s) \|_{L^2(\T^n)}^2 \\
                                                                                                          &\phantom{{}={}} + \frac{1}{2} \E \sup_{s \in [0,t]}  \| u^\varepsilon(s) \|_{L^2(\T^n)}^2 + C \E \int_0^t \| \QQ[\nabla u^\varepsilon(s) ] \|_{L^2(\T^n)}^2 \ds.
    \end{align*}
    From \eqref{eq:epsilon-bounds} we infer for all $T > 0$ and $t \in [0,T]$ that
    \begin{align*}
      \E \sup_{s \in [0,t]} \| u^\varepsilon(s) \|_{L^2(\T^n)}^2 &\le C \E \int_0^t \| u^\varepsilon(s) \|_{L^2(\T^n)}^2 \ds + C,
    \end{align*}
    with a constant  that only depend on the initial condition $\Lambda$ and $T$.
    Using the Gronwall lemma we conclude that there is a constant $C$ which only depends on $\Lambda$ and $T$ such that
    \begin{align*}
      \E \sup_{t \in [0,T]} \| u^\varepsilon(s) \|_{L^2(\T^n)}^2 \le C.
    \end{align*}

    Because of \eqref{eq:epsilon-bounds} we know that the deterministic integral in \eqref{eq:epsilon-du} is uniformly bounded in $L^2(\Omega; C^{0,\frac{1}{2}}([0,T]; L^2(\T^n)))$.
    With the factorization method \cite[Theorem 1.1]{Seidler1993} and \eqref{eq:epsilon-bounds} we infer that there is a $\lambda > 0$ such that the stochastic integral in \eqref{eq:epsilon-du} is uniformly bounded in $L^2(\Omega; C^{0, \lambda}([0,T]; L^2(\T^n)))$. Hence, for some $\lambda \in (0, \frac{1}{2})$
    \begin{align*}
      \E \| u^\varepsilon \|_{C^{0,\lambda}([0,T]; L^2(\T^n))}^2 \le C_{\Lambda, T}
    \end{align*}
    uniformly in $\varepsilon$.
    We conclude that $(u^\varepsilon)_{\varepsilon > 0}$ is uniformly bounded in
    \begin{align*}
      L^2\left(\Omega; L^2(0,T;H^2(\T^n)) \cap C([0,T];H^1(\T^n)) \cap C^{0,\lambda}([0,T]; L^2(\T^n))\right)
    \end{align*}
    and $(W^\varepsilon)_{\varepsilon > 0}$ is uniformly bounded in $L^2\left(\Omega; C^{0,\lambda}([0,T];\R) \right)$.

    In the remaining part of the proof we will show that these bounds imply the existence of a convergent subsequence in a weak sense and we will identify the limit with a solution of \eqref{eq:smcf}. We will follow the same strategy in the proof of \Cref{thm:variational-existence}, where we pass from the finite-dimensional approximations to a solution. Since the line of arguments is very similar but slightly more involved in the case of \Cref{thm:variational-existence} we give a detailed proof only for the latter Theorem and here just comment on the main ideas and on differences between both proofs.

    Using the compactness of the embeddings, the joint laws of $(u_\varepsilon, W_\varepsilon)$ are tight in
    \begin{align*}
      \mathcal{X}_u^T \times \mathcal{X}_W^T
    \end{align*}
    with
    \begin{align*}
      \mathcal{X}_u^T &\coloneqq C([0,T]; (H^1(\T^n), w)) \cap L^2(0,T;H^1(\T^n)) \cap \left(L^2(0,T; H^2(\T^n)), w \right), \\
      \mathcal{X}_W^T &\coloneqq C([0,T];\R).
    \end{align*}
    Since $T > 0$ is arbitrary this also implies the tightness in $\mathcal{X}_u \times \mathcal{X}_W$ with
    \begin{align*}
      \mathcal{X}_u &\coloneqq C_{\text{loc}}([0,\infty); (H^1(\T^n), w)) \cap L^2_{\text{loc}}(0,\infty;H^1(\T^n)) \\
                    &\qquad\cap \left(L^2_{\text{loc}}(0,\infty; H^2(\T^n)), w \right), \\
      \mathcal{X}_W &\coloneqq C_{\text{loc}}([0,\infty);\R).
    \end{align*}
    Now we can argue via the Jakubowski-Skorokhod representation theorem for tight sequences in nonmetric spaces \cite[Theorem 2]{Jakubowski1997} to deduce the existence of a subsequence $\varepsilon_k \searrow 0$, a probability space $(\tilde{\Omega}, \tilde{\F}, \tilde{\P})$ and $\mathcal{X}_u \times \mathcal{X}_W$-valued random variables $(\tilde{u}^k, \tilde{W}^k)$ for $k \in \N$ and $(\tilde{u}, \tilde{W})$ such that $\tilde{u}^k \to \tilde{u}$ a.s. in $\mathcal{X}_u$, $\tilde{W}^k \to \tilde{W}$ a.s. in $\mathcal{X}_W$ and the joint laws of $(\tilde{u}^k, \tilde{W}^k)$ agree with the joint laws of $(u^{\varepsilon_k}, W^{\varepsilon_k})$ for $k \in \N$.    

    Let
    \begin{align*}
      \tilde{\F}^k_t &\coloneqq \bigcap_{s > t} \sigma\left(\tilde{u}^k|_{[0,s]}, \tilde{W}^k|_{[0,s]}, \{ A \in \tilde{\F} \mid \tilde{\P}(A) = 0 \} \right), ~t\in[0,\infty), \,k\in\N\\
      \tilde{\F}_t &\coloneqq \bigcap_{s > t} \sigma\left(\tilde{u}|_{[0,s]}, \tilde{W}|_{[0,s]}, \{ A \in \tilde{\F} \mid \tilde{\P}(A) = 0 \} \right), ~t\in[0,\infty). 
    \end{align*}
    One can prove that $\tilde{W}^k$ is a real-valued $(\tilde{\F}^k_t)_t$-Wiener process and $\tilde{u}^k$ is a solution of \eqref{eq:smcf-viscous} for $\varepsilon_k$ and the Wiener process $\tilde{W}^k$.

    With the a.s. convergences in $\mathcal{X}_W$ resp. $\mathcal{X}_u$ and the uniform bounds derived before one can pass to the limit in the equations and infer that $\tilde{W}$ is a real-valued $(\tilde{F}_t)_t$-Wiener process and $\tilde{u}$ is a solution of \eqref{eq:smcf}. In opposite to the proof of \Cref{thm:variational-existence} the operator in the deterministic part of the equation changes, but the convergence $u^\varepsilon \rightharpoonup u$ in $H^2(\T^n))$ implies
    \begin{align*}
      \varepsilon \Delta u^\varepsilon &+ \QQ[\nabla u^\varepsilon]\divv[\nabla u^\varepsilon] + \frac{1}{2} \vv[\nabla u^\varepsilon] \cdot \hess u^\varepsilon \vv[\nabla u^\varepsilon] \\
      &\rightharpoonup \QQ[\nabla u]\divv[\nabla u] + \frac{1}{2} \vv[\nabla u] \cdot \hess u \vv[\nabla u] \text{ in } L^2(\T^n),
    \end{align*}
    which is enough to pass to the limit in the equation.
    Because of the uniform bounds of $(u^\varepsilon)$ in $L^2(\Omega; L^2(0,T;H^2(\T^n)))$ for all $T > 0$ we know that the limit $\tilde{u}$ is already a martingale solution.
  \end{proof}

  \begin{remark}
    \label{rem:regularity-of-solution}
    Note that under the assumptions of \Cref{thm:main} for a martingale solution $u$ of \eqref{eq:smcf} we have $u \in C([0,t]; H^1(\T^n))$ and $\nabla u \in L^\infty(0,\infty; L^\infty(\T^n))$ $\P$-a.s. for all $t \in I$.
    
    For a function $w \in L^1(\T^n)$ with $\nabla w \in L^\infty(\T^n)$ we have $w \in W^{1,\infty}(\T^n)$ with
  \begin{align*}
    \| w \|_{W^{1,\infty}(\T^n)} \le C\left(\| w \|_{L^1(\T^n)} + \| \nabla w \|_{L^\infty(\T^n)}\right).
  \end{align*}
  Hence, $u \in L^\infty(0,t;W^{1,\infty}(\T^n))$ $\P$-a.s. for all $t \in I$.

  From the proof of \Cref{thm:main} we deduce that $u \in L^2(\Omega; C^{0,\lambda}([0,t]; L^2(\T^n)))$ for all $t \in I$ and some $\lambda > 0$. In combination with the previous result and \cite[Theorem 5]{Simon1987} this yields $u \in C([0,t]; C(\T^n))$ $\P$-a.s. for all $t \in I$. Using sharper interpolation results one can prove that the solution is pathwise Hölder continuous in space and time.
  \end{remark}

  \section{Large-time behavior}
  
  In this section, we study the large-time behavior of solutions to \eqref{eq:smcf}.
  
  \label{sec:large-time-behavior}
  \begin{proof}[Proof of \Cref{thm:large-time}]
   The uniform estimates in \Cref{prop:dirichlet} and \Cref{prop:maximum-principle} imply that
    \begin{align*}
      \hess u \in L^2(\Omega; L^2(0,\infty; L^2(\T^n)))
    \end{align*}
    and
    \begin{align*}
      \| \nabla u \|_{L^\infty(0,\infty; L^\infty(\T^n))} \le L ~\text{a.s.}
    \end{align*}

    For the convergence as $T \to \infty$ we note that by \Cref{cor:variational-sobolev-ito}
      \begin{align*}
        \dif \int_{\T^n} \left(u - W\right) &= \int_{\T^n} \QQ \divv \dt + \int_{\T^n} \left(\QQ-1\right) \circ \dW \\
                           &= -\frac{1}{2} \int_{\T^n} \vv \cdot \hess u \vv \dt + \int_{\T^n} \left(\QQ-1\right) \dW.
      \end{align*}
      Let
      \begin{align*}
        \alpha \coloneqq \frac{1}{|\T^n|} \Bigg( \int_{\T^n} &u_0 - \frac{1}{2} \int_0^\infty \int_{\T^n} \vvu \cdot \hess u(t) \vvu \dt  \\
        &+ \int_0^\infty \int_{\T^n} \left(\QQu - 1 \right) \dW(t) \Bigg).
      \end{align*}
      To bound the drift we estimate
      \begin{align*}
        \left| \int_{\T^n} \vv \cdot \hess u \vv \right| \le \int_{\T^n} \left| \hess u \right| \left| \nabla u \right| \le \| \nabla u \|_{H^1(\T^n)}^2 \le C \| \hess u \|_{L^2(\T^n)}^2,
      \end{align*}
      where we have used $|\vv[p]| \le \min \{ |p|, 1 \}$ and a Poincar\'e inequality, and infer
      \begin{align*}
        \E \left| \int_0^\infty \int_{\T^n} \vvu \cdot \hess u(t) \vvu \dt \right| \le C \| \hess u \|_{L^2(\Omega; L^2(0,\infty; L^2(\T^n)))}^2 < \infty.
      \end{align*}
      Furthermore we have $\QQ[p] - 1 \le |p|$ and therefore for the martingale part of $\alpha$ the bound
      \begin{align*}
        \E \left| \int_0^\infty \int_{\T^n} \left(\QQu - 1 \right) \dW(t) \right|^2 &\le \E \int_0^\infty \| \nabla u(t) \|_{H^1(\T^n)}^2 \dt  \\
        &\le C \E \int_0^\infty \| \hess u(t) \|_{L^2(\T^n)}^2 \dt < \infty,
      \end{align*}
      hence $\alpha \in L^1(\Omega)$ is a well-defined random variable.

      \begin{sloppypar}We find a sequence $(t_k)_{k\in\N}$ of increasing times $t_k \to \infty$ such that $\E \| \hess u(t_k) \|_{L^2(\T^n)}^2 \to 0$ for $k \to \infty$.\end{sloppypar}
      Now, we apply a Poincar\'e inequality to obtain
      \begin{align}
        \label{eq:large-time-estimate}
        \| u(t) - W(t) - \alpha \|_{H^1(\T^n)} \le C \left(  \| \nabla u(t) \|_{L^2(\T^n)} + \left| \int_{\T^n} \left( u(t) - W(t) - \alpha \right) \right| \right).
      \end{align}
      From \Cref{prop:dirichlet} we infer that $\| \nabla u \|_{L^2(\T^n)}^2$ is a non-negative supermartingale and
      \begin{align*}
        \E \sup_{t \ge T} \| \nabla u(t) \|_{L^2(\T^n)}^2 \le C \E \| \nabla u(T) \|_{L^2(\T^n)}^2 \le C \E \| \nabla u(t_k) \|_{L^2(\T^n)}^2
      \end{align*}
      for $t_k < T$. Hence
      \begin{align*}
        \lim_{T \to \infty} \left( \E \sup_{t \ge T} \| \nabla u(t) \|_{L^2(\T^n)} \right)^2 &\le \lim_{T \to \infty} \E \sup_{t \ge T} \| \nabla u(t) \|_{L^2(\T^n)}^2 \\
                                                                                             &\le C \lim_{k \to \infty} \E \| \nabla u(t_k) \|_{L^2(\T^n)}^2 \\
                                                                                             &\le C \lim_{k \to \infty} \E \| \hess u(t_k) \|_{L^2(\T^n)}^2 = 0.
      \end{align*}
      For the second term in \eqref{eq:large-time-estimate} we have with the Burkholder-Davis-Gundy inequality and the estimates from above
      \begin{align*}
        \E& \sup_{t \ge T} \left| \int_{\T^n} \left( u(t) - W(t) - \alpha \right) \right| \\
                          &\le \E \sup_{t \ge T} \left| -\frac{1}{2} \int_t^\infty \int_{\T^n} \vvu[s] \cdot \hess u(s) \vvu[s] \ds\right| \\&\phantom{{}={}} + \E \sup_{t \ge T} \left| \int_t^\infty \int_{\T^n} \left(\QQu[s] - 1 \right) \dW(s) \right| \\
          &\le C \E \int_T^\infty \| \hess u(t) \|_{L^2(\T^n)}^2 \dt  
        + C \left( \E \int_T^\infty \| \nabla u(t) \|_{L^2(\T^n)}^2 \dt \right)^{\frac{1}{2}} \\
                                                                            &\to 0 \text{ for } T \to \infty.
      \end{align*}

    \end{proof}

  From \Cref{thm:large-time} we can deduce the next corollary which extends the one-dimensional result from \cite[Theorem 4.2]{Es-Sarhir2012} to higher dimensions. Furthermore it improves the convergence in distribution in $C_{\text{loc}}([0,\infty); L^2(\T^n))$ to convergence in $L^1(\Omega; C_b([0,\infty), H^1(\T^n)))$.
  \begin{corollary}
    Let $u$ be a strong solution of \eqref{eq:smcf} with $$\esssup[\P] \| \nabla u_0 \|_{L^\infty(\T^n)} < \infty.$$ Then for $T \to \infty$ we have
    \begin{align*}
      (u(T+t) - u(T))_{t \ge 0} - (W(T+t) - W(T))_{t \ge 0} \to 0 
    \end{align*}
    in $L^1(\Omega; C_b([0,\infty); H^1(\T^n)))$.

    \begin{proof}
      We estimate
      \begin{align*}
        \E \sup_{t \ge 0}& \| u(T+t) - u(T) - (W(T+t)-W(T)) \|_{H^1(\T^n)} \\
        &\le 2\E  \sup_{t \ge T} \| u(t) - W(t) - \alpha \|_{H^1(\T^n)} \to 0.
      \end{align*}
    \end{proof}
  \end{corollary}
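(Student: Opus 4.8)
The plan is to obtain the corollary as a direct consequence of \Cref{thm:large-time}, via a triangle inequality that inserts the random constant $\alpha$.

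First I would record that the hypotheses of \Cref{thm:large-time} are met. A strong solution $u$ with $L \coloneqq \esssup[\P]\|\nabla u_0\|_{L^\infty(\T^n)} < \infty$ is in particular a martingale solution of \eqref{eq:smcf} with initial law $\Lambda \coloneqq \P\circ u_0^{-1}$: by definition of a strong solution $u_0 \in L^2(\Omega;H^1(\T^n))$, so $\Lambda$ has bounded second moments, and $\supp\Lambda \subset \{z\in H^1(\T^n) : \|\nabla z\|_{L^\infty(\T^n)} \le L\}$. Hence \Cref{thm:large-time} supplies a random variable $\alpha$ (which by the proof of \Cref{thm:large-time} lies in $L^1(\Omega)$) with $\E\sup_{s\ge T}\|u(s)-W(s)-\alpha\|_{H^1(\T^n)} \to 0$ as $T\to\infty$.

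Next, for fixed $T\ge 0$ and any $t\ge 0$ I would write
\[
u(T+t) - u(T) - \bigl(W(T+t) - W(T)\bigr) = \bigl(u(T+t) - W(T+t) - \alpha\bigr) - \bigl(u(T) - W(T) - \alpha\bigr),
\]
so that, taking $H^1(\T^n)$-norms and then the supremum over $t\ge 0$,
\[
\sup_{t\ge 0}\bigl\|u(T+t)-u(T)-(W(T+t)-W(T))\bigr\|_{H^1(\T^n)} \le 2\sup_{s\ge T}\|u(s)-W(s)-\alpha\|_{H^1(\T^n)}.
\]
Taking expectations and invoking \Cref{thm:large-time}, the right-hand side tends to $0$ as $T\to\infty$, which is precisely convergence to $0$ in $L^1(\Omega;C_b([0,\infty);H^1(\T^n)))$.

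There is essentially no obstacle here beyond two bookkeeping points. One is the passage from a strong to a martingale solution just described, needed so that \Cref{thm:large-time} applies. The other is checking that the objects are genuine elements of $L^1(\Omega;C_b([0,\infty);H^1(\T^n)))$: continuity of $t\mapsto u(t)$ in $H^1(\T^n)$ follows from \Cref{cor:variational-sobolev-ito} and continuity of $t\mapsto W(t)$ from continuity of Brownian paths (and the increment vanishes at $t=0$, so it is bounded once the above estimate is available), while finiteness of the supremum in $L^1(\Omega)$ follows from the moment bound $\E\sup_{t\ge 0}\|\nabla u(t)\|_{L^2(\T^n)}<\infty$ of \Cref{prop:dirichlet} together with the $L^1(\Omega)$-integrability of $\alpha$ and of the drift and martingale tails established in the proof of \Cref{thm:large-time}. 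Everything else is the triangle inequality.
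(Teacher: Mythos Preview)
Your proof is correct and follows the same approach as the paper: insert the random constant $\alpha$ from \Cref{thm:large-time}, apply the triangle inequality, and bound the supremum over $t\ge 0$ by twice the supremum over $s\ge T$ of $\|u(s)-W(s)-\alpha\|_{H^1(\T^n)}$. The paper's proof is just this one-line estimate; your additional bookkeeping (verifying the hypotheses of \Cref{thm:large-time} for a strong solution and checking membership in $L^1(\Omega;C_b([0,\infty);H^1(\T^n)))$) is reasonable extra care but not needed beyond what the paper assumes implicitly.
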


  \appendix

\section{Variational SPDE under a compactness assumption}
\label{sec:variational-spdes}

In this section we will consider infinite-dimensional stochastic differential equations with a variational structure. In \Cref{sec:variational-ito} we will prove an It\^o formula for this kind of equation, which will be used in \Cref{sec:variational-existence} to show existence for variational SPDEs.
During the whole section we will work with the following assumptions.

\begin{assumptions}
  \label{assumptions:variational}
  Let $V$ and $H$ be separable Hilbert spaces with $V \subset H \simeq H' \subset V'$ and $V$ densely and compactly embedded in $H$. Furthermore we will consider another separable Hilbert space $U$, which will be the space where a Wiener process is defined. For notational convenience we will restrict the presentation to the case of infinite-dimensional spaces, although finite-dimensional spaces could be treated as well.

  Then we can find an orthonormal basis $(e^k)_{k \in \N}$ of $H$ which is an orthogonal basis of $V$ and we will use the abbreviation $\lambda_k \coloneqq \| e^k \|_V^2$ for $k \in \N$. We will assume that the $(e^k)_k$ are arranged such that $(\lambda_k)_k$ is a non-decreasing sequence.
  Furthermore we will denote by $(g_l)_{l\in\N}$ an orthonormal basis of $U$.

  If not otherwise specified then a cylindrical Wiener process $W$ on $U$ with respect to a filtration $(\F_t)_t$ will always be assumed to have the representation
  \begin{align*}
    W = \sum_{l\in\N} g_l \beta_l
  \end{align*}
  with $(\beta_l)_{l\in\N}$ mutually independent real-valued $(\F_t)$-Brownian motions.
\end{assumptions}

\begin{remark}[Existence of $(e^k)_{k \in \N}$]
  Let $J_V : V \to V'$ be the identification of $V$ with its dual space $V'$ via
  \begin{align*}
    \sprod[V',V]{J_Vx}{y} \coloneqq \sprod[V]{x}{y},~x,y \in V.
  \end{align*}
  The restriction of the inverse of $J_V$ to $H$ together with the embedding of $V$ into $H$ gives $J_V^{-1}|_{H} : H \to H$ which is compact and self-adjoint. Hence we find an orthonormal basis $(e^k)_{k\in\N}$ of $H$ of eigenvectors of $J_V^{-1}$ with corresponding eigenvalues $\left(\frac{1}{\lambda_k}\right)_{k \in \N}$. For $k \in \N$ we have that $e^k = \lambda_k J^{-1}e^k \in V$ and
  \begin{align*}
    \sprod[V]{e^k}{e^l} = \sprod[V',V]{J_Ve^k}{e^l} = \lambda_k \sprod[H]{e^k}{e^l} = \lambda_k \delta_{k,l} ~\forall k,l \in \N.
  \end{align*}
\end{remark}

\subsection{It\^o formula}
\label{sec:variational-ito}
The following result is a generalization of a well-known It\^o formula for variational SPDEs, cf.~\cite[Theorem 4.2.5]{Prevot2007} and \cite[II.II.§4]{Pardoux1975}.

\begin{proposition}[It\^o formula and continuity]
  \label{prop:variational-ito}
Assume that $T>0$, $\stochbasis$ is a stochastic basis with a normal filtration and $W$ a cylindrical Wiener process on $U$. Furthermore let $u_0 \in L^2(\Omega; H)$ be $\F_0$-measurable and $u$, $v$, $B$ be predictable processes with values in $V$, $V'$ and $L_2(U;H)$ , respectively, such that
  \begin{align*}
    u &\in L^2(\Omega; L^2(0,T;V)), \\
    v &\in L^2(\Omega; L^2(0,T;V')),\\
    B &\in L^2(\Omega; L^2(0,T;L_2(U;H))),
  \end{align*}
  and
  \begin{align}
    \label{eq:ito-du}
    u(t) - u_0 = \int_0^t v(s) \ds + \int_0^t B(s) \dW(s)~ \text{ in } V' ~ \P\text{-a.s.} ~\forall t \in [0,T].
  \end{align}
  Then $u$ has a version with continuous paths in $H$ and for this version it holds that $u \in L^2(\Omega; C([0,T]; H))$ with
  \begin{align*}
    \| u(t) \|_H^2 - \| u_0 \|_H^2 &= \int_0^t 2 \sprod[V',V]{v(s)}{u(s)} + \| B(s) \|_{L_2(U;H)}^2 \ds \\
                                   &\phantom{{}={}} + 2 \int_0^t \sprod[H]{u(s)}{B(s) \dW(s)} ~\forall t \in [0,T].
  \end{align*}
  Furthermore, if $F \in C^1(H)$ and the second G\^ateaux derivative $\hess F : H \to L(H)$ exists with
  \begin{itemize}
  \item $F$, $\Dif F$ and $\hess F$ bounded on bounded subsets of $H$,
  \item $\hess F : H \to L(H)$ continuous from the strong topology on $H$ to the weak-$\ast$ topology on $L(H) = \left(L_1(H)\right)^\ast$ and
  \item $(\Dif F)|_{V} : V \to V$ continuous from the strong topology on $V$ to the weak topology on $V$ and growing only linearly
  \begin{align*}
    \| \Dif F(x) \|_V \le C\left(1 + \| x\|_V\right)  ~\forall x \in V,
  \end{align*}
  \end{itemize}
  then
  \begin{align*}
    F(u(t)) - F(u_0) &= \int_0^t \sprod[V',V]{v(s)}{\Dif F(u(s))} \\
    &\phantom{{}={}} +  \frac{1}{2} \tr \left[ \hess F(u(s)) B(s) (B(s))^\ast \right] \ds  \\
                     &\phantom{{}={}} + \int_0^t \sprod[H]{\Dif F(u(s))}{B(s) \dW(s)} ~\P\text{-a.s.} ~\forall t \in [0,T].
  \end{align*}
\end{proposition}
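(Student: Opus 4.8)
\textit{Overall approach.} The plan is to follow the Galerkin strategy of Pardoux, adapted to the hypotheses on $F$. Let $H_n \coloneqq \vspan\{e^1,\dots,e^n\}$ and let $P_n$ be defined on $V'$ by $P_n x \coloneqq \sum_{k=1}^n \sprod[V',V]{x}{e^k}\, e^k$. Since $(e^k)_k$ is orthogonal in both $H$ and $V$, the $H$- and $V$-orthogonal projections onto $H_n$ coincide and are both given by $P_n$; consequently $P_n$ is a contraction on $H$, on $V$ and on $V'$, it is self-adjoint across the whole Gelfand triple, $\sprod[V',V]{P_n x}{w} = \sprod[V',V]{x}{P_n w}$, and $P_n \to \Id$ strongly on $H$ and on $V$. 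Applying $P_n$ to \eqref{eq:ito-du} shows that $P_n u$ is a genuine $H_n$-valued It\^o process with Bochner-integrable drift $P_n v$ (indeed $\int_0^T \| P_n v(s) \|_H \ds \le (\sum_{k\le n}\lambda_k)^{1/2}\int_0^T \|v(s)\|_{V'}\ds < \infty$ a.s.) and diffusion $P_n B$, so the finite-dimensional It\^o formula is available on each $H_n$.

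\textit{The energy equality and the continuous modification.} First I would apply the finite-dimensional It\^o formula to $x \mapsto \|x\|_H^2$ and $P_n u$, obtaining the asserted identity with $P_n$ inserted everywhere. Using $P_n u(s) \to u(s)$ in $V$ for a.e.\ $s$ (since $u \in L^2(\Omega;L^2(0,T;V))$), $P_n u_0 \to u_0$ in $H$, $P_n B(s) \to B(s)$ in $L_2(U;H)$ with $\|P_n\cdot\| \le \|\cdot\|$, together with dominated convergence (the integrands being dominated by $\tfrac12(\|v\|_{V'}^2+\|u\|_V^2)+\|B\|_{L_2(U;H)}^2 \in L^1(\Omega\times(0,T))$) and the It\^o isometry for the martingale term, I pass to the limit and obtain the formula for a.e.\ $t$, hence for all $t$ because the right-hand side is a.s.\ continuous in $t$. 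From \eqref{eq:ito-du} the process $u$ is a.s.\ continuous into $V'$, and the energy equality gives $u \in L^\infty(0,T;H)$ a.s.\ and, via Burkholder--Davis--Gundy on the martingale term, $\E\sup_t \|u(t)\|_H^2 < \infty$; approximating a generic $h \in H$ by elements of $V$ then shows $t \mapsto \sprod[H]{u(t)}{h}$ is a.s.\ continuous, i.e.\ $u$ is a.s.\ weakly continuous into $H$, and combining this with a.s.\ continuity of $t \mapsto \|u(t)\|_H$ upgrades to strong continuity; a further BDG estimate gives $u \in L^2(\Omega;C([0,T];H))$.

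\textit{The formula for general $F$.} I would first record that $F|_{H_n} \in C^2(H_n;\R)$: its gradient is $y \mapsto P_n \Dif F(y)$, continuous because $\Dif F : H \to H$ is; its Hessian is $y \mapsto P_n \hess F(y)|_{H_n}$, and if $y_m \to y$ in $H_n$ then $\hess F(y_m) \overset{\ast}{\rightharpoonup} \hess F(y)$ in $(L_1(H))^\ast = L(H)$, so testing against the finitely many pairs $(e^i,e^j)$ yields convergence in $L(H_n)$. Hence the finite-dimensional It\^o formula applies to $F(P_n u(\cdot))$, producing the claimed identity with $P_n u$ in place of $u$, with $\sprod[V',V]{v(s)}{P_n\Dif F(P_n u(s))}$ in the drift, $\tfrac12\tr[P_n\hess F(P_n u(s)) P_n B(s) B(s)^\ast P_n]$ in the correction term, and $P_n\Dif F(P_n u(s))$ in the stochastic integrand. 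Now I send $n\to\infty$ term by term, first localizing on the event $\{R \le N\}$ with $R \coloneqq \|u\|_{L^\infty(0,T;H)} \in L^2(\Omega)$ and letting $N\to\infty$ afterwards. Continuity of $F$ and of $\Dif F$ on $H$ and $P_n u(t) \to u(t)$ in $H$ handle $F(P_n u(t))$, $F(P_n u_0)$ and the martingale term (stochastic dominated convergence, the integrand bounded in $L_2(U;H)$ by $\|B(s)\|_{L_2(U;H)}\sup_{\|x\|_H\le N}\|\Dif F(x)\|_H$). For the drift, for a.e.\ $s$ one has $P_n u(s)\to u(s)$ in $V$, hence $\Dif F(P_n u(s)) \rightharpoonup \Dif F(u(s))$ in $V$ by the strong-to-weak continuity of $\Dif F|_V$; together with $P_n \to \Id$ strongly on $V$, its self-adjointness, and the linear growth $\|\Dif F(x)\|_V \le C(1+\|x\|_V)$ for domination (paired with $v \in L^2(\Omega\times(0,T);V')$), this gives the convergence. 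For the It\^o--Stratonovich correction term, the weak-$\ast$ continuity of $\hess F$ gives $\hess F(P_n u(s)) \overset{\ast}{\rightharpoonup} \hess F(u(s))$ in $L(H)$ while $P_n B(s) B(s)^\ast P_n \to B(s) B(s)^\ast$ in $L_1(H)$; since a weak-$\ast$ convergent sequence paired with a norm-convergent sequence in the predual converges, the traces converge, with domination $|\tr[\cdots]| \le \sup_{\|x\|_H\le N}\|\hess F(x)\|_{L(H)}\,\|B(s)\|_{L_2(U;H)}^2$.

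\textit{Main obstacle.} I expect the delicate point to be the It\^o--Stratonovich correction term: one needs precisely the pairing of the weak-$\ast$ convergence of $\hess F(P_n u(s))$ in $L(H) = (L_1(H))^\ast$ against the nuclear-norm convergence of $P_n B(s) B(s)^\ast P_n$, which is why the hypothesis on $\hess F$ is stated in terms of the weak-$\ast$ topology rather than the weak or strong operator topology, and one must control this uniformly enough in $(\omega,s)$ to invoke dominated convergence, via the random radius $R$ and the localization on $\{R\le N\}$. A secondary point, which has to be dealt with before $F(u(t))$ is even meaningful pointwise in $t$, is establishing the continuous $H$-valued modification of $u$; this is routine once the energy equality is in hand, but it must be carried out first.
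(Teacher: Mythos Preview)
Your argument is correct and follows the classical Pardoux/Viot route via finite-dimensional Galerkin projections $P_n$ and the finite-dimensional It\^o formula. The paper takes a genuinely different path: instead of the hard cut-off $P_n$, it uses the smoothing operator
\[
\rho_\varepsilon(v') \coloneqq \sum_{k\in\N} e^{-\varepsilon\lambda_k}\,\sprod[V',V]{v'}{e^k}\,e^k,
\]
which maps $V'$ continuously into $V$. Then $\rho_\varepsilon(u)$ is a genuine $V$-valued It\^o process, and the paper invokes the infinite-dimensional It\^o formula of Da~Prato--Zabczyk directly in $V$; for this it needs a separate lemma showing that under the stated hypotheses $F|_V \in C^2(V)$ with $F|_V$, $\Dif(F|_V)$ and $\hess(F|_V)$ uniformly continuous on bounded subsets of $V$, which is where the compact embedding $V\hookrightarrow H$ (hence $L(H)\hookrightarrow L(V;V')$ compactly) is exploited. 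The limit $\varepsilon\to 0$ is then handled by dominated convergence together with the stochastic dominated convergence theorem.

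What each approach buys: the paper's smoothing argument reduces the passage to the limit to a single appeal to an existing $C^2$-It\^o formula, at the cost of the auxiliary regularity lemma for $F|_V$; the compactness does the heavy lifting there. Your projection argument is more elementary (only the finite-dimensional It\^o formula is needed) and uses the individual hypotheses on $F$ --- in particular the weak-$\ast$ continuity of $\hess F$ paired with the $L_1(H)$-convergence of $P_nB(s)B(s)^\ast P_n$ --- precisely where they are required in the limit, with the localization on $\{R\le N\}$ providing the domination that the boundedness-on-bounded-sets hypotheses supply. Both proofs rely on the simultaneous orthogonal basis $(e^k)_k$ coming from the compact embedding, but your limit passage does not need the additional compactness step $L(H)\hookrightarrow L(V;V')$.
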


Note that this generalization is similar to the result presented \cite[II.II.§4]{Pardoux1975}, where it was proven under slightly different assumptions, e.g.~ the embedding $V \subset H$ is not assumed to be compact and $V$ is only a separable Banach space, which is uniformly smooth and convex, but $F$ is assumed to be twice Fr\'echet differentiable.
By analyzing the proof in \cite[II.II.§4]{Pardoux1975} one can see that the assumptions on the differentiability of $F$ can be relaxed. It is sufficient to assume the G\^ateaux differentiability of $\Dif F : H \to H$ and a weak continuity of $\hess F : H \to L(H)$, which ensures that the restriction $F|_V : V \to \R$ is twice Fr\'echet differentiable.

Similar results are also stated in \cite[I.3.2]{Pardoux1979} and \cite[Lemma 2.3.5]{Pardoux2007} without proof and in \cite[I.1.2]{Pardoux1979} for a finite-dimensional noise. In \cite[I.§1 Theorem 1.3]{Viot1976} the result is proven by a finite-dimensional approximation.

For the readers convenience we will discuss the main steps of a different proof here, which makes use of the stronger assumptions compared to \cite{Pardoux1975} and adapts the proof of \cite[Proposition A.1]{Debussche2016}, where $H$ and $V$ are assumed to be Sobolev spaces. In \eqref{eq:variational-smoothing} below, we define a smoothing operator which in fact is the semigroup generated by $-J_V$ with $J_V : V \to V'$ given by the canonical identification of the Hilbert space $V$ with its dual space. If $V = H^1_0(\T^n)$, $H = L^2(\T^n)$, then this is the classical heat semigroup.
    
  \begin{proof}[Proof of \Cref{prop:variational-ito}]
      \emph{Step 1: Smoothing the solution.}\\
      For $\varepsilon > 0$ we will consider the smoothing operator
      \begin{equation}
        \label{eq:variational-smoothing}
        \begin{gathered}
        \rho_\varepsilon : V' \to V ,\qquad
        \rho_\varepsilon(v') \coloneqq \sum_{k \in \N} \exp(-\varepsilon \lambda_k) \sprod[V',V]{v'}{e^k} e^k.
        \end{gathered}
      \end{equation}
      It is easy to verify that with $\varepsilon \to 0$
      \begin{align*}
        \rho_\varepsilon(x) &\to x \text{ in } V' ~\forall x \in V', \\
        \rho_\varepsilon(x) &\to x \text{ in } H ~\forall x \in H \text{ and}\\
        \rho_\varepsilon(x) &\to x \text{ in } V ~\forall x \in V.
      \end{align*}
      Since $\rho_\varepsilon \in L(V'; V)$ we have for all $t \in [0,T]$ that
      \begin{align*}
        \rho_\varepsilon(u(t)) - \rho_\varepsilon(u_0) = \int_0^t \rho_\varepsilon(v(s)) \ds + \sum_{l\in\N} \int_0^t \rho_\varepsilon(B(s) g_l) \dif \beta_l(s) ~\P\text{-a.s. in } V.
      \end{align*}
      In the following we will abbreviate $u_{0,\varepsilon} \coloneqq \rho_\varepsilon(u_0)$, $u_\varepsilon(t) \coloneqq \rho_\varepsilon(u(t))$, $v_\varepsilon(t) \coloneqq \rho_\varepsilon(v(t))$ and $B_\varepsilon(t)g_l \coloneqq \rho_\varepsilon(B(t) g_l)$.

      \emph{Step 2: $u$ takes values in $H$}\\
      We will apply the It\^o formula \cite[Theorem 4.32]{DaPrato2014} for the function $ \| \cdot \|_H^2$. This gives
      \begin{align*}
        \| u_\varepsilon(t) \|_H^2 - \| u_{0,\varepsilon} \|_H^2 &= \int_0^t 2 \sprod[H]{u_\varepsilon(s)}{v_\varepsilon(s)} + \| B_\varepsilon(s) \|_{L_2(U;H)}^2 \ds \\
                                                                 &\phantom{{}={}}+2  \int_0^t \sprod[H]{u_\varepsilon(s)}{B_\varepsilon(s) \dW(s)}.
      \end{align*}
      Using the Burkholder-Davis-Gundy inequality and the assumptions on $u$, $v$ and $B$ one can conclude that $\E \sup_{t \in [0,T]} \| u_\varepsilon(t) \|_H^2$ is uniformly bounded in $\varepsilon$. With Fatou's Lemma we infer that
      \begin{align}
        \label{eq:ito-sup-H}
        \begin{split}
          \E \sup_{t \in [0,T]} \| u(t) \|_H^2 &=  \E \sup_{t \in [0,T]} \lim_{\varepsilon \to 0} \| u_\varepsilon(t) \|_H^2  \\
          &\le \E \liminf_{\varepsilon \to 0} \sup_{t \in [0,T]} \| u_\varepsilon(t) \|_H^2 \\
          &\le \liminf_{\varepsilon \to 0} \E \sup_{t \in [0,T]} \| u_\varepsilon(t) \|_H^2 < \infty.
        \end{split}
      \end{align}

      \emph{Step 3: Proving the It\^o formula.}\\
      Now, in \Cref{lem:variational-continuity-functional} it is verified that $F|_V \in C^2(V)$ with $F|_V$, $\Dif (F|_V)$ and $\hess (F|_V)$ uniformly continuous on bounded subsets of $V$.
      We apply the It\^o formula \cite[Theorem 4.32]{DaPrato2014} to conclude that
      \begin{align}
        \label{eq:ito-approx}
        \begin{split}
          F(u_\varepsilon(t)) - F(u_{0,\varepsilon}) &= \int_0^t \sprod[V',V]{v_\varepsilon(s)}{\Dif F(u_\varepsilon(s))} \\
          &\phantom{{}={}} + \frac{1}{2}\int_0^t \tr \left[ \hess F(u_\varepsilon(s)) B_\varepsilon(s) (B_\varepsilon(s))^\ast \right] \ds  \\
          &\phantom{{}={}} + \int_0^t \sprod[H]{\Dif F(u_\varepsilon(s))}{B_\varepsilon(s) \dW(s)} ~ \P\text{-a.s.} ~\forall t \in [0,T].
        \end{split}
      \end{align}

      Because of the assumptions on $F$ and an infinite dimensional version of the dominated convergence theorem for stochastic integrals \cite[Theorem IV.32]{Protter2004} we can pass to the limit $\varepsilon \to 0$ on both sides of this equation. 

      Hence, $F(u)$ has a continuous version for which
      \begin{align*}
        F(u(t)) - F(u_0) &= \int_0^t \sprod[V',V]{v(s)}{\Dif F(u(s))} \ds \\
                         &\phantom{{}={}} +  \frac{1}{2} \int_0^t \tr \left[ \hess F(u(s)) B(s) (B(s))^\ast \right] \ds  \\
                         &\phantom{{}={}} + \int_0^t \sprod[H]{\Dif F(u(s))}{B(s) \dW(s)} ~\forall t \in [0,T].
      \end{align*}

      \emph{Step 4: $u$ has a continuous version.}\\
      We infer from the calculations above that there is a version of $u$ such that $\| u \|_H^2$ is continuous and
      \begin{align*}
        \| u(t) \|_H^2 - \| u_0 \|_H^2 &= \int_0^t 2 \sprod[V',V]{v(s)}{u(s)} + \| B(s) \|_{L_2(U;H)}^2 \ds \\
                                       &+ 2 \int_0^t \sprod[H]{u(s)}{B(s) \dW(s)} ~\forall t \in [0,T].
      \end{align*}
      From \eqref{eq:ito-du} and \eqref{eq:ito-sup-H} we infer that $u \in C([0,T]; (H, w))$ a.s. It is well known (cf. \cite[Theorem 4.2.5]{Prevot2007}) that this together with the continuity of $\| u \|_H^2$ implies that $u \in C([0,T]; H)$ a.s.  Since $H$ is separable we can apply \cite[Proposition 3.18]{DaPrato2014} to conclude that $u : \Omega \to C([0,T];H)$ is measurable. This proves that $u \in L^2(\Omega; C([0,T]; H))$.
    \end{proof}

  \begin{lemma}
    \label{lem:variational-continuity-functional}
    Under the assumptions of \Cref{prop:variational-ito} we have $F|_V \in C^2(V)$ and $F|_V$, $\Dif F|_V : V \to V'$ and $\hess F|_V : V \to L(V;V')$ are uniformly continuous on bounded subsets of $V$.

    \begin{proof}
      We only have to prove the continuity of $\hess F|_V : V \to L(V;V')$ and the uniform continuity on bounded subsets of $V$.

      The compactness of the embeddings $V\subset H \simeq H' \subset V'$ implies that the embedding $L(H) \subset L(V;V')$ is compact. Thus, when $u_k \rightharpoonup u$ in $V$ then $u_k \to u$ in $H$ and by the assumptions from \Cref{prop:variational-ito} we infer $\hess F(u_k) \overset{\ast}{\rightharpoonup} \hess F(u)$ in $L(H)$, hence $\hess F(u_k) \to \hess F(u)$ in $L(V;V')$. This proves that $\hess F|_V : (V,w) \to L(V;V')$ is continuous.
      
      Let $M \subset V$ be a bounded set in $V$, then $M$ is precompact in $H$ and therefore $F|_M : M \to \R$ and $\Dif F|_M : M \to H \subset V'$ are uniformly continuous. Furthermore $M$ is precompact in $(V,w)$ and therefore $\hess F|_M : M \to L(V;V')$ is uniformly continuous.
    \end{proof}
  \end{lemma}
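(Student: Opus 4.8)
The plan is to derive the whole statement from the compactness of the Gelfand triple $V \subset H \simeq H' \subset V'$. Two consequences will be used repeatedly: a bounded subset of $V$ is relatively compact in $H$ and, since $V$ is a separable Hilbert space, relatively compact in the weak topology of $V$ (which is metrizable on bounded sets); and the induced embedding $L(H) \hookrightarrow L(V;V')$, obtained by restricting an operator to $V \times V$, is compact.

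First I would note that the first-order part of the statement is simply inherited from $H$. Since $F \in C^1(H)$ and $V \hookrightarrow H \hookrightarrow V'$ continuously, the composition of $\Dif F : V \to H$ with $H \hookrightarrow V'$ is continuous, so $F|_V$ is Fréchet differentiable with $\Dif(F|_V) : V \to V'$ continuous; in particular $F|_V \in C^1(V)$. Moreover, since the Gâteaux derivative of $\Dif F : H \to H$ exists and equals $\hess F(x) \in L(H)$, composing with the embedding $L(H) \hookrightarrow L(V;V')$ shows that $\Dif(F|_V) : V \to V'$ is Gâteaux differentiable with Gâteaux derivative $\hess F|_V : V \to L(V;V')$. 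Hence, to obtain $F|_V \in C^2(V)$, it remains only to prove that $\hess F|_V : V \to L(V;V')$ is norm-continuous; this is the step where the hypotheses on $F$ in \Cref{prop:variational-ito} genuinely enter, and I expect it to be the crux of the argument.

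For that step, take $x_k \to x$ strongly in $H$. Because $\hess F$ is bounded on bounded sets and continuous from the strong topology of $H$ to the weak-$\ast$ topology of $L(H) = (L_1(H))^\ast$, the sequence $\hess F(x_k)$ is bounded in $L(H)$ and converges weak-$\ast$ to $\hess F(x)$; on bounded sets the weak-$\ast$ topology coincides with the weak operator topology, so $\sprod[H]{\hess F(x_k)y}{z} \to \sprod[H]{\hess F(x)y}{z}$ for all $y,z \in H$. Under the compact embedding $L(H) \hookrightarrow L(V;V')$ the bounded sequence $\hess F(x_k)$ is relatively norm-compact in $L(V;V')$; any norm limit $S$ of a subsequence satisfies $\sprod[V',V]{Sy}{z} = \sprod[H]{\hess F(x)y}{z}$ for all $y,z \in V$ (the pairing of the image of $T \in L(H)$ in $L(V;V')$ with $y,z \in V$ being $\sprod[H]{Ty}{z}$), so $S$ equals the image of $\hess F(x)$ and therefore the whole sequence converges, $\hess F(x_k) \to \hess F(x)$ in $L(V;V')$. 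Since $V \hookrightarrow H$ continuously — indeed since weak convergence in $V$ already forces strong convergence in $H$ — this shows $\hess F|_V$ is continuous, even as a map $(V,w) \to L(V;V')$, and completes the proof that $F|_V \in C^2(V)$.

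It remains to prove uniform continuity on a bounded set $M \subset V$. Here $M$ is relatively compact in $H$, so the continuous maps $F$ and $\Dif F : H \to H$ are uniformly continuous on the $H$-closure of $M$ with respect to the $H$-metric; passing to the finer $V$-metric then yields uniform continuity of $F|_M$ and of $\Dif F|_M : M \to H \subset V'$ in $\|\cdot\|_V$. For $\hess F|_M$ one uses instead that $M$ is relatively compact in the weak topology of $V$, which is metrizable there, together with the continuity of $\hess F|_V$ from $(V,w)$ to $L(V;V')$ proved above: $\hess F$ is then uniformly continuous on the weak closure of $M$ with respect to the weak metric, and once more passing to the finer norm metric gives uniform continuity of $\hess F|_M : M \to L(V;V')$ in $\|\cdot\|_V$. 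The only point requiring care throughout is keeping track of the three topologies in play and using the elementary fact that a finer metric only makes uniform continuity easier to satisfy.
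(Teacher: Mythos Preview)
Your proof is correct and follows essentially the same approach as the paper: both use the compactness of the embedding $L(H)\hookrightarrow L(V;V')$ to upgrade weak-$\ast$ convergence of $\hess F$ in $L(H)$ to norm convergence in $L(V;V')$, and then obtain uniform continuity on bounded $M\subset V$ from precompactness of $M$ in $H$ (for $F$ and $\Dif F$) and in $(V,w)$ (for $\hess F$). You add some details the paper leaves implicit, notably the reduction to continuity of $\hess F|_V$ and the metrizability of the weak topology on bounded sets needed to deduce uniform continuity from continuity on a compact set.
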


  We will apply \Cref{prop:variational-ito} to the appropriate spaces for \eqref{eq:smcf}.
  \begin{corollary}
    \label{cor:variational-sobolev-ito}
    \begin{sloppypar}Let $T > 0$, $\stochbasis$ be a stochastic basis with a normal filtration and $W$ a cylindrical Wiener process on $U$.
    Furthermore let $u_0 \in L^2(\Omega; H^1(\T^n))$ be $\F_0$-measurable and $u, v, B$ be predictable processes with $u \in L^2(\Omega; L^2(0,T;H^2(\T^n)))$, $v \in L^2(\Omega; L^2(0,T;L^2(\T^n)))$ and $B \in L^2(\Omega; L^2(0,T;L_2(U; H^1(\T^n))))$ such that
    \begin{align}
      \label{eq:variational-sobolev-ito-du}
      \du = v \dt + B \dW \text{ in } L^2(\T^n).
    \end{align}
    \end{sloppypar}
    Then $u$ has a version with continuous paths in $H^1(\T^n)$ and for this version it holds that $u \in L^2(\Omega; C([0,T]; H^1(\T^n)))$.
    If $F = F(z,p) \in C^2(\R \times \R^n)$ with $\partial_z^2 F$, $\partial_z \nabla_p F$ and $\hess[p] F$ bounded 
    then we have
    \begin{align*}
      \int_{\T^n} &F(u(t), \nabla u(t)) \dx - \int_{\T^n} F(u_0, \nabla u_0) \dx \\
      &= \int_0^t \int_{\T^n} \left( \partial_z F(u(s), \nabla u(s)) - \divergence \left(\nabla_p F(u(s), \nabla u(s))\right) \right) v(s) \ds \\
                         &\phantom{{}={}} + \frac{1}{2} \sum_{l\in\N} \int_0^t \int_{\T^n} \partial_{zz} F(u(s), \nabla u(s)) \left|B_l(s) \right|^2 \ds \\
                         &\phantom{{}={}}+ \frac{1}{2} \sum_{l\in\N} \int_0^t \int_{\T^n} \partial_z \nabla_p F(u(s), \nabla u(s)) \cdot \nabla \left( \left| B_l(s) \right|^2 \right) \ds \\
                         &\phantom{{}={}}+ \frac{1}{2} \sum_{l\in\N} \int_0^t \int_{\T^n} \nabla B_l(s)  \cdot \hess[p] F(u(s), \nabla u(s)) \nabla  B_l(s) \ds \\
                         &\phantom{{}={}}+ \sum_{l\in\N} \int_0^t \int_{\T^n} \big[ \partial_z F(u(s), \nabla u(s)) B_l(s) \\
                         &\phantom{{}={}}\qquad\qquad\quad\quad + \nabla_p F(u(s), \nabla u(s)) \cdot \nabla B_l(s) \big] \dif\beta_l(s)
    \end{align*}
    a.s. for all $t \in [0,T]$, where $B_l\coloneqq Bg_l$, $l \in \N$.
    \begin{proof}
      We consider the spaces $V = H^2(\T^n)$ and $H = H^1(\T^n)$. To work in the framework from above we have to do the rather unusual identification of $w \in H^1(\T^n)$ with $J_Hw \coloneqq -\lapl w + w \in H'$ where
      \begin{align*}
        \sprod[H',H]{J_Hw}{\varphi} = \sprod[H^1(\T^n)]{w}{\varphi} = \int_{\T^n} \nabla w \cdot \nabla \varphi + w \varphi ,~\varphi \in H^1(\T^n).
      \end{align*}
      Then
      \begin{align}
        \label{eq:variational-sobolev-ito-du2}
        \begin{split}
          \dif \sprod[H',H]{J_Hu}{w} &= \dif \sprod[L^2(\T^n)]{u}{-\lapl w + w} \\
          &= \sprod[L^2(\T^n)]{v}{-\lapl w + w} \dt  \\
        &\phantom{{}={}}+ \sprod[L^2(\T^n)]{B \dW}{-\lapl w + w}  ~\forall w \in H^2(\T^n),
        \end{split}
      \end{align}
      which is an equation for $J_H u$ in $V'$.
      We consider the function $G : H^1(\T^n) \to \R$ with
      \begin{align*}
        G(w) \coloneqq \int_{\T^n} F(w(x), \nabla w(x)) \dx, ~w \in H^1(\T^n).
      \end{align*}
      Since $F \in C^2$ it easy to check that $G \in C^1(H^1(\T^n))$ and that the second G\^ateaux derivative $\hess G$ exists.
      We calculate for $w, \varphi, \psi \in H^1(\T^n)$
      \begin{align*}
        \sprod[H',H]{\Dif G(w)}{\varphi} &= \int_{\T^n} \partial_z F(w,\nabla w) \varphi + \nabla_p F(w, \nabla w) \cdot \nabla \varphi, \\
        \sprod[H',H]{\hess G(w) \varphi}{\psi} &= \int_{\T^n} \partial_{zz} F(w, \nabla w) \varphi \psi + \partial_z \nabla_p F(w, \nabla w) \cdot \left(\varphi \nabla \psi + \psi \nabla \varphi\right) \\
        &\phantom{{}={}} + \int_{\T^n} \nabla \varphi \cdot \hess[p] F(w, \nabla w) \nabla \psi.
      \end{align*}
      We have that $G$ and $\Dif G$ are bounded on bounded subsets of $H^1(\T^n)$ and that $\hess G$ is bounded because of the bounds of the second derivatives of $F$. On bounded subsets of $L(H) = \left(L_1(H)\right)^\ast$ the weak-$\ast$ topology is equivalent to the weak operator topology and therefore the continuity of $\hess G : H \to (L(H), w^\ast)$ follows from the fact that for all $w_k \to w$ in $H^1(\T^n)$ and all $\varphi, \psi \in H^1(\T^n)$ we have
      \begin{align*}
        &\sprod[H',H]{\hess G(w_k) \varphi}{\psi} \\
        &= \int_{\T^n} \partial_{zz} F(w_k, \nabla w_k) \varphi \psi + \partial_z \nabla_p F(w_k, \nabla w_k) \cdot \left(\varphi \nabla \psi + \psi \nabla \varphi\right) \\
                                                 &\to \int_{\T^n} \partial_{zz} F(w, \nabla w) \varphi \psi + \partial_z \nabla_p F(w, \nabla w) \cdot \left(\varphi \nabla \psi + \psi \nabla \varphi\right) \\
                                                 &= \sprod[H',H]{\hess G(w) \varphi}{\psi}.
      \end{align*}

      For $w \in H^2(\T^n), \varphi \in H^1(\T^n)$ we have
      \begin{align*}
        &\sprod[H',H]{\Dif G(w)}{\varphi} \\
        &= \int_{\T^n} \left( \partial_z F(w, \nabla w) - \partial_z \nabla_p F(w, \nabla w) \cdot \nabla w - \hess[p] F(w, \nabla w) : \hess w \right) \varphi \\
        &= \int_{\T^n} \Phi(w) \varphi,
      \end{align*}
      with
      \begin{align*}
        \Phi(w) \coloneqq \partial_z F(w, \nabla w) - \partial_z \nabla_p F(w, \nabla w) \cdot \nabla w - \hess[p] F(w, \nabla w) : \hess w.
      \end{align*}
      Because of the assumptions on $F$ we find that $\Phi(w) \in L^2(\T^n)$ for $w \in H^2(\T^n)$ and $\Phi : H^2(\T^n) \to L^2(\T^n)$ is continuous.
      Since for the restriction of $J_H$ to $H^2(\T^n)$ we have that $J_H|_{H^2(\T^n)} : H^2(\T^n) \to L^2(\R^n)$ is an isomorphism, we conclude $J_H^{-1} \circ \Phi(w) \in H^2(\T^n)$ and $J_H^{-1} \circ \Phi : H^2(\T^n) \to H^2(\T^n)$ is continuous with
      \begin{align*}
        \| J_H^{-1} \Phi(w) \|_{H^2(\T^n)} \le C \| \Phi(w) \|_{L^2(\T^n)} \le C(1 + \| w \|_{H^2(\T^n)}).
      \end{align*}
      Note that for the application of \Cref{prop:variational-ito} we shall have an equation for $\du$ in $V'$, whereas \eqref{eq:variational-sobolev-ito-du} is an equation for $\du$ in $L^2(\T^n)$. Therefore we have to use \eqref{eq:variational-sobolev-ito-du2} to infer that a.s. for all $t \in [0,T]$
      \begingroup
      \allowdisplaybreaks
      \begin{align*}
        G(u&(t)) - G(u_0) \\&= \int_0^t \sprod[V',V]{J_H v(s)}{J_H^{-1} \circ \Phi(u(s))} + \frac{1}{2} \tr \left[\hess G(u(s)) B(s) (B(s))^\ast \right] \ds \\
                         &\phantom{{}={}} + \int_0^t \sprod[H',H]{\Dif G(u(s))}{B(s) \dW(s)} \\
                         &= \int_0^t \int_{\T^n} \left( \partial_z F(u(s), \nabla u(s)) - \divergence \left(\nabla_p F(u(s), \nabla u(s))\right) \right) v(s) \ds \\
                         &\phantom{{}={}} + \frac{1}{2} \sum_{l\in\N} \int_0^t \int_{\T^n} \partial_{zz} F(u(s), \nabla u(s)) \left|B(s) g_l\right|^2 \ds \\
                         &\phantom{{}={}}+ \frac{1}{2} \sum_{l\in\N} \int_0^t \int_{\T^n} \partial_z \nabla_p F(u(s), \nabla u(s)) \cdot \nabla \left( \left| B(s)g_l \right|^2 \right) \ds \\
                         &\phantom{{}={}}+ \frac{1}{2} \sum_{l\in\N} \int_0^t \int_{\T^n} \nabla \left( B(s) g_l \right) \cdot \hess[p] F(u(s), \nabla u(s)) \nabla \left( B(s) g_l \right) \ds \\
                         &\phantom{{}={}}+ \sum_{l\in\N} \int_0^t \int_{\T^n} \big[\partial_z F(u(s), \nabla u(s)) B(s) g_l \\
                         &\hspace{3cm}+ \nabla_p F(u(s), \nabla u(s)) \cdot \nabla \left(B(s) g_l \right) \big]\dif\beta_l(s).
      \end{align*}
      \endgroup
    \end{proof}
  \end{corollary}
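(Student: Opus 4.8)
The plan is to apply the abstract It\^o formula \Cref{prop:variational-ito} with the choices $V = H^2(\T^n)$, $H = H^1(\T^n)$, the same Hilbert space $U$, and the functional $G(w) \coloneqq \int_{\T^n} F(w,\nabla w) \dx$. The embedding $V \subset H$ is dense and compact by Rellich's theorem, so \Cref{assumptions:variational} holds. The slightly unusual feature of this choice is that the abstract framework forces us to identify $H$ with its own dual, which here means the identification $w \leftrightarrow J_H w$ with $J_H w \coloneqq -\lapl w + w$, so that $\sprod[H',H]{J_H w}{\varphi} = \sprod[H^1(\T^n)]{w}{\varphi}$; correspondingly $J_H^{-1}|_H$ is the compact self-adjoint operator whose eigenbasis realizes $(e^k)_k$, and $J_H|_{H^2(\T^n)} \colon H^2(\T^n) \to L^2(\T^n)$ is an isomorphism.

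The first step is to recast \eqref{eq:variational-sobolev-ito-du}, which is posed in $L^2(\T^n)$, as an equation in $V'$. Testing it against $-\lapl w + w$ for $w \in H^2(\T^n)$ gives
\begin{equation*}
  \dif \sprod[H',H]{J_H u}{w} = \sprod[L^2(\T^n)]{v}{-\lapl w + w}\dt + \sprod[L^2(\T^n)]{B\dW}{-\lapl w + w},
\end{equation*}
that is, $\dif (J_H u) = (J_H v)\dt + B\dW$ in $V'$, where $J_H v \in V'$ is defined by $\sprod[V',V]{J_H v}{w} = \sprod[L^2(\T^n)]{v}{-\lapl w + w}$ and satisfies $\| J_H v \|_{V'} \le C\| v \|_{L^2(\T^n)}$, and the $H$-valued stochastic integral is embedded into $V'$ via $J_H$. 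Since $u \in L^2(\Omega; L^2(0,T;H^2(\T^n)))$, since $v$ and $B$ have the required integrability, and since $u_0$ is an $\F_0$-measurable element of $L^2(\Omega; H^1(\T^n))$, the structural hypotheses of \Cref{prop:variational-ito} are in force; its first part already yields the continuity assertion $u \in L^2(\Omega; C([0,T]; H^1(\T^n)))$.

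The second step is to verify the hypotheses on $G$. Since $F \in C^2$, one checks that $G \in C^1(H^1(\T^n))$, that its second G\^ateaux derivative exists, and that
\begin{align*}
  \sprod[H',H]{\Dif G(w)}{\varphi} &= \int_{\T^n} \partial_z F(w,\nabla w)\varphi + \nabla_p F(w,\nabla w) \cdot \nabla\varphi, \\
  \sprod[H',H]{\hess G(w)\varphi}{\psi} &= \int_{\T^n} \partial_{zz}F\, \varphi\psi + \partial_z \nabla_p F \cdot (\varphi \nabla\psi + \psi \nabla\varphi) + \nabla\varphi \cdot \hess[p]F\, \nabla\psi,
\end{align*}
with the arguments of $\partial_{zz}F$, $\partial_z \nabla_p F$, $\hess[p]F$ always equal to $(w,\nabla w)$. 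Boundedness of these three derivatives forces $\partial_z F$ and $\nabla_p F$ to grow at most linearly, so $G$ and $\Dif G$ are bounded on bounded subsets of $H^1(\T^n)$ and $\hess G$ is bounded as a map into $L(H)$. The required strong-to-weak-$\ast$ continuity of $\hess G \colon H \to L(H)$ reduces, on norm-bounded sets, to weak-operator continuity, which follows from dominated convergence: if $w_k \to w$ in $H^1(\T^n)$, then along a subsequence $w_k \to w$ and $\nabla w_k \to \nabla w$ a.e., the integrands above are dominated by fixed $L^1(\T^n)$-functions built from $\varphi$ and $\psi$, and a subsequence argument upgrades this to convergence of the whole sequence. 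Finally, for $w \in H^2(\T^n)$ we have $\nabla_p F(w,\nabla w) \in H^1(\T^n;\R^n)$, so an integration by parts in the formula for $\Dif G(w)$ yields $\sprod[H',H]{\Dif G(w)}{\varphi} = \int_{\T^n} \Phi(w)\varphi$ with $\Phi(w) \coloneqq \partial_z F(w,\nabla w) - \divergence(\nabla_p F(w,\nabla w)) \in L^2(\T^n)$ and $\divergence(\nabla_p F(w,\nabla w)) = \partial_z \nabla_p F \cdot \nabla w + \hess[p]F : \hess w$; thus $\Dif G(w)$ corresponds under the identification to $J_H^{-1}\Phi(w) \in H^2(\T^n) = V$, and since $\Phi \colon H^2(\T^n) \to L^2(\T^n)$ is continuous with $\| \Phi(w) \|_{L^2(\T^n)} \le C(1 + \| w \|_{H^2(\T^n)})$, the map $\Dif G|_V \colon V \to V$ is continuous (a fortiori continuous from the strong to the weak topology) and of linear growth.

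The last step is to invoke \Cref{prop:variational-ito} with $F = G$ and to translate its conclusion back. The drift pairing becomes $\sprod[V',V]{J_H v}{J_H^{-1}\Phi(u)} = \sprod[L^2(\T^n)]{v}{\Phi(u)} = \int_{\T^n}\bigl(\partial_z F(u,\nabla u) - \divergence(\nabla_p F(u,\nabla u))\bigr)\,v$; writing $B(s)B(s)^\ast = \sum_l B_l(s)\otimes B_l(s)$ and using $2B_l\nabla B_l = \nabla(|B_l|^2)$ turns $\tfrac{1}{2}\tr\bigl[\hess G(u)BB^\ast\bigr] = \tfrac{1}{2}\sum_l \sprod[H',H]{\hess G(u)B_l}{B_l}$ into exactly the three second-order terms in the statement; and $\sprod[H',H]{\Dif G(u)}{B_l} = \int_{\T^n}\partial_z F(u,\nabla u)B_l + \nabla_p F(u,\nabla u)\cdot\nabla B_l$ gives the stochastic term. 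Together with $G(u(t)) - G(u_0) = \int_{\T^n}F(u(t),\nabla u(t)) - \int_{\T^n}F(u_0,\nabla u_0)$ this is the claimed identity. The only genuine difficulty is organizational: keeping the two dualities apart --- the $L^2(\T^n)$-pairing in which \eqref{eq:variational-sobolev-ito-du} is posed, versus the $H^1$-based identification $H \simeq H'$ underlying the abstract framework --- and checking that $\Dif G$ takes values in $V$ under the latter; once this bookkeeping is set up, the remaining steps are the chain rule and integration by parts.
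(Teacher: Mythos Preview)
Your proof is correct and follows essentially the same route as the paper: choose $V=H^2(\T^n)$, $H=H^1(\T^n)$ with the Riesz map $J_H=-\lapl+\mathrm{Id}$, recast \eqref{eq:variational-sobolev-ito-du} as an equation in $V'$, verify the hypotheses of \Cref{prop:variational-ito} for $G(w)=\int_{\T^n}F(w,\nabla w)$ (including that $\Dif G|_V$ lands in $V$ via $J_H^{-1}\Phi$), and then unwind the abstract conclusion term by term. Your justification of the weak-$\ast$ continuity of $\hess G$ via dominated convergence along a.e.-convergent subsequences is slightly more explicit than the paper's, but the argument is the same.
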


  \subsection{Existence for variational SPDEs}
  \label{sec:variational-existence}
  We will adapt the approach of \cite[Section 2.3.3]{Pardoux2007} to prove existence of weak solutions for variational SPDEs
  \begin{align}
    \label{eq:variational-spde}
    \begin{split}
      \du &= A(u) \dt + B(u) \dW \\
      u(0) &= u_0,
    \end{split}
  \end{align}
  which goes back to \cite{Viot1976}.

  In addition to \Cref{assumptions:variational} we will make the following assumptions.
  \begin{assumptions}
    \label{assumptions:variational-existence}
    Let $A : V \to V'$ and $B : V \to L_2(U;H)$. We will write $B^\ast : V \to L_2(H;U)$ for the adjoint operator $B^\ast(u) \coloneqq \left(B(u)\right)^\ast$. 
    We assume:
    \begin{itemize}
    \item \textbf{Coercivity:} There are constants $\alpha, C > 0$ such that
      \begin{align}
        \label{eq:variational-coercivity}
        2\sprod[V', V]{A(u)}{u} + \| B(u) \|_{L_2(U;H)}^2 \le -\alpha \| u \|_{V}^2 + C\left(1 + \| u \|_{H}^2\right) ~\forall u \in V.
      \end{align}
    \item \textbf{Growth bounds:} There is a constant $C > 0$ and $\delta \in (0,2]$ such that
      \begin{align}
        \label{eq:variational-growth-A}
        \| A(u) \|_{V'}^2 &\le C\left(1+ \|u\|_V^2\right) ~\forall u \in V,\\
        \label{eq:variational-growth-B}
        \| B(u) \|_{L_2(U;H)}^2 &\le C\left(1 + \| u \|_V^2 \right) ~\forall u \in V,\\
        \label{eq:variational-growth-B-to-V'}
        \| B(u) \|_{L(U;V')}^2 &\le C\left(1 + \|u \|_V^{2-\delta} + \|u\|_H^2\right).
      \end{align}
    \item \textbf{Continuity:} $A : V \to V'$ is weak-weak-$\ast$ sequentially continuous, that means
      \begin{align}
        \label{eq:variational-continuity-A}
        u_k \rightharpoonup u \text{ in } V ~\Rightarrow~ A(u_k) &\overset{\ast}{\rightharpoonup} A(u) \text{ in } V'
      \end{align}
      and $B^\ast : V \to L_2(H ; U)$ is sequentially continuous from the weak topology on $V$ to the strong operator topology on $L(H;U)$, that means
      \begin{align}
        \label{eq:variational-continuity-B}
        u_k \rightharpoonup u \text{ in } V ~\Rightarrow~ B^\ast(u_k) h &\to B^\ast(u) h \text{ in } U ~\forall h \in H.
      \end{align}
    \end{itemize}
  \end{assumptions}

  The assumptions \eqref{eq:variational-coercivity}, \eqref{eq:variational-growth-A} and \eqref{eq:variational-continuity-A} are the same as in \cite{Pardoux2007}, whereas \eqref{eq:variational-continuity-B} is weaker.
  Furthermore we have replaced the sublinear growth bound from \cite{Pardoux2007} for $B(u)$ by the weaker assumptions \eqref{eq:variational-growth-B} and \eqref{eq:variational-growth-B-to-V'}.
  These weaker assumptions are necessary to apply the theory to the viscous equation \eqref{eq:smcf-viscous}. To prove this generalization we have to prove bounds for higher moments of the $\| \cdot \|_H$ norm of the approximations, whereas in the proof in \cite{Pardoux2007} only the second moment of the $\| \cdot \|_H$ norm needed to bounded. This will be done in \Cref{prop:variational-bounds} under the additional assumption that the corresponding higher moment of the $\| \cdot \|_H$ norm is  bounded for the initial data.
  Similarly to the ideas of \cite{Hofmanova2017}, we will use the Jakubowski-Skorokhod representation theorem \cite{Jakubowski1997} for tight sequences in non-metric spaces to prove that our approximations converge on a different probability space. We will make use of similar arguments as in \cite{Breit2018} to handle the unbounded time interval. Finally, we will show that this limit is a martingale solution of \eqref{eq:variational-spde} using a general method of constructing martingale solutions without relying on any kind of martingale representation theorem, which was introduced in \cite{Brzezniak2007} and already used in \cite{Ondrejat2010} and \cite{Hofmanova2017}, among others.
  
  We will use a standard Galerkin scheme (compare with \cite[Chapter 2.3]{Pardoux2007}) to prove that there is a martingale solution of \eqref{eq:variational-spde} if the initial condition has bounded $q$-th moment in $H$ for some $q > 2$. With the $(e^k)_{k\in\N}$ as in \Cref{assumptions:variational} we will write
  \begin{align*}
    V_N \coloneqq \vspan \left(\{ e^1, \dotsc, e^N \}\right), ~N \in \N.
  \end{align*}
  Our main result is:

  \begin{theorem}
    \label{thm:variational-existence}
    Let $q > 2$ and $\Lambda$ be a Borel probability measure on $H$ with finite $q$-th moment
    \begin{align*}
      \int_H \| z \|_H^q \dif\Lambda(z) < \infty.
    \end{align*}
    Then there is a martingale solution of \eqref{eq:variational-spde} with initial data $\Lambda$. That means, that there is a stochastic basis $\stochbasis[\infty]$ with a normal filtration, a cylindrical $(\F_t)$-Wiener process $W$ on $U$ and a predictable $u$ with $u \in L^2(\Omega;L^2(0,T;V)) \cap L^2(\Omega; C([0,T]; H))$ for all $T > 0$ and
    \begin{align*}
    &  \sprod[H]{u(t)}{v} - \sprod[H]{u(0)}{v} = \int_0^t \sprod[V',V]{A(u(s))}{v} \ds + \int_0^t \sprod[H]{B(u(s)) \dW(s)}{v}  \\
                                              &\qquad= \int_0^t \sprod[V',V]{A(u(s))}{v} \ds + \sum_{l \in \N} \int_0^t \sprod[H]{B(u(s))g_l}{v} \dif\beta_l(s) 
    \end{align*}
    $\P$-a.s. for all $t \in [0,\infty)$ and $v \in V$, and
    \begin{align*}
      \P \circ u(0)^{-1} = \Lambda.
    \end{align*}
  \end{theorem}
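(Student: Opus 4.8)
The plan is to construct a martingale solution by a Galerkin approximation combined with a stochastic compactness argument, in the spirit of \cite{Viot1976} and \cite[Section 2.3.3]{Pardoux2007}, but exploiting the compact embedding $V\subset H$ in place of any monotonicity. For $N\in\N$ let $P_N$ denote the $H$-orthogonal projection onto $V_N=\vspan(\{e^1,\dots,e^N\})$ and consider the finite-dimensional It\^o equation
\[
  \dif u_N = P_N A(u_N)\dt + P_N B(u_N)\dW,\qquad u_N(0)=P_N u_0,
\]
read in the coordinates of $(e^k)_{k\le N}$ as an $\R^N$-valued SDE driven by $W$. Its coefficients are continuous by \eqref{eq:variational-continuity-A}--\eqref{eq:variational-continuity-B}, and the coercivity \eqref{eq:variational-coercivity} provides the a priori control that prevents blow-up, so a martingale solution $u_N$ exists (by a Skorokhod-type existence theorem for SDEs with continuous coefficients, or after an auxiliary Lipschitz regularization of $A$ and $B$ that is removed in the limit).

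Next I would derive bounds uniform in $N$. Applying the classical It\^o formula to $\|u_N\|_H^2$ and, more generally, to $\|u_N\|_H^p$ for $2\le p\le q$, and combining \eqref{eq:variational-coercivity} and \eqref{eq:variational-growth-A}--\eqref{eq:variational-growth-B-to-V'} with the Burkholder--Davis--Gundy and Gronwall lemmas, one obtains for every $T>0$
\[
  \E\sup_{t\in[0,T]}\|u_N(t)\|_H^q + \E\int_0^T\|u_N(s)\|_V^2\ds \le C_{T,q}<\infty
\]
uniformly in $N$; the point is that the split growth bound \eqref{eq:variational-growth-B-to-V'} with $\delta>0$ is precisely what closes the higher-moment estimate, even though $B$ grows only linearly into $L_2(U;H)$ --- this is where the argument departs from \cite{Pardoux2007} and matches what is needed for the viscous equation \eqref{eq:smcf-viscous}. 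From the equation itself one then reads off a uniform fractional-in-time bound for $u_N$ as a $V'$-valued process (drift part) and, via the factorization method, as an $H$-valued process (stochastic part).

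These estimates, the compactness of $V\subset H$ and the Aubin--Lions--Simon lemma make the laws of $u_N$ tight on
\[
  \mathcal{X}_u := C_{\text{loc}}([0,\infty);(H,w)) \cap L^2_{\text{loc}}(0,\infty;H) \cap \bigl(L^2_{\text{loc}}(0,\infty;V),w\bigr),
\]
while the law of $W$ is tight on a suitable path space $\mathcal{X}_W$, e.g.\ $C_{\text{loc}}([0,\infty);U_1)$ with $U_1$ a larger separable Hilbert space into which $U$ embeds Hilbert--Schmidt. Since $\mathcal{X}_u$ is not metrizable I would invoke the Jakubowski--Skorokhod representation theorem \cite{Jakubowski1997} to obtain a probability space carrying $(\tilde u_N,\tilde W_N)$ with the laws of $(u_N,W)$ and such that $\tilde u_N\to\tilde u$, $\tilde W_N\to\tilde W$ almost surely, equipped with the normal filtrations generated by $(\tilde u_N,\tilde W_N)$ and by $(\tilde u,\tilde W)$. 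Arguing as in \cite{Breit2018} handles the unbounded interval $[0,\infty)$, and one checks that $\tilde W_N$, $\tilde W$ are cylindrical Wiener processes for these filtrations and that each $\tilde u_N$ solves the corresponding Galerkin equation; the uniform moment bounds above, via Vitali's theorem, upgrade the a.s.\ convergences to the $L^p$-convergences needed below.

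It then remains to pass to the limit in the equation tested against $v\in V$. In the drift $\int_0^t\sprod[V',V]{A(\tilde u_N(s))}{v}\ds$ one uses the weak-$\ast$ continuity \eqref{eq:variational-continuity-A}: from $\tilde u_N\rightharpoonup\tilde u$ in $L^2_{\text{loc}}(0,\infty;V)$ and $\tilde u_N\to\tilde u$ in $L^2_{\text{loc}}(0,\infty;H)$ a.s., for a.e.\ $(\omega,s)$ a subsequence of $\tilde u_N(s)$ converges weakly in $V$ to $\tilde u(s)$, so $A(\tilde u_N)\overset{\ast}{\rightharpoonup}A(\tilde u)$ along it; combined with the uniform $L^2(0,T;V')$ bound this identifies the weak limit of $A(\tilde u_N)$ with $A(\tilde u)$. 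For the stochastic term I would use the martingale identification method of \cite{Brzezniak2007}: one shows that $M(t):=\tilde u(t)-\tilde u(0)-\int_0^t A(\tilde u)\ds$ is a continuous square-integrable martingale with quadratic variation $\int_0^t B(\tilde u)B(\tilde u)^\ast\ds$ and cross-variation with $\tilde W$ given by $\int_0^t B(\tilde u)\ds$, using \eqref{eq:variational-continuity-B} and the uniform estimates to pass to the limit in the relevant variation functionals, which forces $M=\int_0^\cdot B(\tilde u)\dW$. Finally \Cref{prop:variational-ito} upgrades $\tilde u$ to a process with continuous $H$-valued paths lying in $L^2(\tilde\Omega;C([0,T];H))$ for all $T$, and predictability follows from the construction. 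The main obstacle I anticipate is exactly this last identification step: handling the nonlinear drift with only weak-$\ast$ sequential continuity (rather than monotonicity) forces the delicate subsequence-in-time argument above, and the stochastic term must be identified on a non-metrizable path space with a cylindrical noise; the higher-moment estimate of the second step, needed because the growth hypotheses on $B$ are weaker than in \cite{Pardoux2007}, is the other genuinely new ingredient.
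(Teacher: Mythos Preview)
Your outline matches the paper's proof almost step for step: Galerkin approximation, uniform moment and $V$-bounds via It\^o's formula plus Gronwall, time regularity via the factorization method, tightness in $\mathcal{X}_u\times\mathcal{X}_W$, Jakubowski--Skorokhod representation, and identification of the limit by the martingale method of \cite{Brzezniak2007}. Two points in your sketch need sharpening, however.

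First, the higher-moment estimate. The paper obtains only $\sup_{t\le T}\E\|u_N(t)\|_H^q$, not $\E\sup_{t\le T}\|u_N(t)\|_H^q$, and only for $q$ in a small interval $(2,q_0)$ with $q_0$ determined by the ratio of the coercivity constant $\alpha$ to the growth constant in \eqref{eq:variational-growth-B}; this is what closes the estimate, not \eqref{eq:variational-growth-B-to-V'}. The sublinear bound \eqref{eq:variational-growth-B-to-V'} is used elsewhere: in the factorization step, to control $B(u_N)$ in $L^q(\Omega\times(0,T);L_2(U;Z))$ for a space $Z$ with $V'\hookrightarrow Z$ Hilbert--Schmidt, and again in the continuity lemma that passes $\mathcal{B}^\ast$ to the limit. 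The weaker moment bound $\sup_t\E\|\cdot\|_H^q$ is all that is needed for the uniform integrability in the martingale identification, so your overstatement is harmless, but the attribution of roles is off.

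Second, and more substantively, your drift identification is too quick. From $\tilde u_N\rightharpoonup\tilde u$ in $L^2(0,T;V)$ and $\tilde u_N\to\tilde u$ in $L^2(0,T;H)$ you do \emph{not} get, for a.e.\ $s$, weak convergence of $\tilde u_N(s)$ in $V$: the sequence $\|\tilde u_N(s)\|_V$ need not be bounded at any fixed $s$. The paper resolves this with a truncation argument (its \Cref{lem:variational-continuity}): replace $\tilde u_N(s)$ by $\tilde u(s)$ on $\{\|\tilde u_N(s)\|_V>M\}$, apply \eqref{eq:variational-continuity-A}--\eqref{eq:variational-continuity-B} pointwise on the truncated sequence (which is now bounded in $V$ for a.e.\ $s$), use Vitali's theorem with the uniform $L^2(0,T;V)$ bound to get convergence of the time integrals, and finally let $M\to\infty$. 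This lemma is precisely where \eqref{eq:variational-growth-B-to-V'} with $\delta>0$ is essential, to make $\|\mathcal{B}^\ast(\tilde u_N)-\mathcal{B}^\ast(\tilde u)\|_U^2$ uniformly integrable in $t$.
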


  To prove \Cref{thm:variational-existence}, we will consider \eqref{eq:variational-spde} on the finite-dimensional space $V_N$.
  \begin{theorem}
    \label{thm:variational-sde-existence}
    Let $N \in \N$ and $\Lambda$ be a Borel probability measure on $H$. Then there is a weak solution of the finite-dimensional approximation of \eqref{eq:variational-spde}.

    That means, that there is a stochastic basis $\stochbasis[\infty]$ with a normal filtration, $\beta_1, \dotsc, \beta_N$ mutually independent real-valued $(\F_t)$-Brownian motions and a predictable $V_N$-valued process $u$ with $u \in L^2(\Omega; C([0,T]; V_N))$ for all $T > 0$ such that
    \begin{align*}
&      \sprod[H]{u(t)}{v} - \sprod[H]{u(0)}{v} \\
      &\qquad= \int_0^t \sprod[V',V]{A(u(s))}{v} \ds + \sum_{l=1}^N \int_0^t \sprod[H]{B(u(s))g_l}{v} \dif\beta_l(s)
    \end{align*}
    $\P$-a.s. for all $t \in [0,\infty)$ and $v \in V_N$, and
    \begin{align*}
      \P \circ u(0)^{-1} = \Lambda_N \coloneqq \Lambda \circ P_N^{-1},
    \end{align*}
    where $P_N : H \to V_N$ is the orthogonal projection with respect to $H$.

    \begin{proof}
      We transform the equation into an $N$-dimensional stochastic differential equation. For this let
      \begin{gather*}
        \tilde{A} : \R^N \to \R^N \\
        \left(\tilde{A}(\alpha) \right)_k \coloneqq \sprod[V',V]{A\left(\sum_{m=1}^N \alpha_m e^m \right)}{e^k}, ~k = 1, \dotsc, N, ~ \alpha \in \R^N
      \end{gather*}
      and
      \begin{gather*}
        \tilde{B} : \R^N \to \R^{N\times N} \\
        \left(\tilde{B}(\alpha)\right)^k_{l} \coloneqq \sprod[H]{B\left(\sum_{m=1}^N \alpha_m e^m \right) g_l}{e^k}, ~k,l=1,\dotsc, N, ~ \alpha \in \R^N.
      \end{gather*}
      These mappings are continuous and grow at most linearly. Therefore we can apply a classical theorem for finite dimensional stochastic differential equations \cite[Theorem 0.1]{Hofmanova2012} and \cite[Theorem IV.2.4]{Ikeda1981} to find a weak solution of
      \begin{align*}
        \dif \alpha_k &= \left(\tilde{A}(\alpha)\right)_k \dt + \sum_{l=1}^N \left(\tilde{B}(\alpha)\right)^k_l \dif \beta_l, ~ k=1,\dotsc,N\\
        \P \circ \alpha(0)^{-1} &= \tilde{\P}_N.
      \end{align*}
      where
      \begin{align*}
        \tilde{\P}_N(M) \coloneqq \Lambda_N\left( \left\{ \sum_{k=1}^N \alpha_k e^k \in H \, \Big| \,\alpha \in M \right\} \right), ~ M \subset \R^N.
      \end{align*}
      Defining $u(t) = \sum_{k=1}^N \alpha_k(t) e^k$ for $t \in [0,\infty)$ we find $u \in L^2(\Omega; C([0,T];V_N))$ for all $T > 0$ with
      \begin{align*}
        \sprod[H]{u(t)}{e^k} &- \sprod[H]{u_0}{e^k}
        = \alpha_k(t) - \alpha_k(0) \\
        &= \int_0^t \left(\tilde{A}(\alpha(s))\right)_k \ds + \sum_{l=1}^N \left(\tilde{B}(\alpha(s))\right)^k_l \dif \beta_l(s) \\
        &= \int_0^t \sprod[V',V]{A(u(s))}{e^k} \ds + \sum_{l=1}^N \int_0^t \sprod[H]{B(u(s))g_l}{e^k} \dif \beta_l(s)
      \end{align*}
      and
      \begin{align*}
        \P \circ u(0)^{-1} = \Lambda_N.
      \end{align*}
    \end{proof}
  \end{theorem}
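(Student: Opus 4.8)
The plan is to recognize the Galerkin system as a genuine $N$-dimensional It\^o stochastic differential equation and to solve it \emph{weakly} via the classical existence theory for SDEs with continuous, sublinearly growing coefficients. First I would identify $V_N$ with $\R^N$ through the isometry $\alpha\mapsto\sum_{m=1}^N\alpha_m e^m$ (isometric because $(e^k)_k$ is orthonormal in $H$) and set $\bigl(\tilde A(\alpha)\bigr)_k\coloneqq\sprod[V',V]{A(\sum_m\alpha_m e^m)}{e^k}$ and $\bigl(\tilde B(\alpha)\bigr)^k_l\coloneqq\sprod[H]{B(\sum_m\alpha_m e^m)g_l}{e^k}$ for $k,l=1,\dots,N$. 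I would then verify the two hypotheses needed for weak existence. Continuity: on the finite-dimensional space $V_N$ weak convergence coincides with norm convergence, so \eqref{eq:variational-continuity-A} makes $\tilde A$ continuous, and \eqref{eq:variational-continuity-B} (testing $B^\ast(u_k)e^k\to B^\ast(u)e^k$ against the $g_l$, $l\le N$) makes $\tilde B$ continuous. Linear growth: since all norms on $V_N$ are equivalent, $\|\sum_m\alpha_m e^m\|_V\le C_N|\alpha|$, so \eqref{eq:variational-growth-A} gives $|\tilde A(\alpha)|\le(\sum_{k\le N}\lambda_k)^{1/2}\|A(u)\|_{V'}\le C_N(1+|\alpha|)$, while Bessel's inequality $\sum_{k\le N}|\sprod[H]{B(u)g_l}{e^k}|^2\le\|B(u)g_l\|_H^2$ together with \eqref{eq:variational-growth-B} gives $|\tilde B(\alpha)|^2\le\|B(u)\|_{L_2(U;H)}^2\le C_N(1+|\alpha|^2)$. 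Finally I would push $\Lambda_N=\Lambda\circ P_N^{-1}$ forward through the coordinate map to a Borel probability measure $\tilde\P_N$ on $\R^N$.

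With $\tilde A,\tilde B$ continuous and of linear growth, I would invoke the classical weak-existence theorem for finite-dimensional SDEs with prescribed initial law, namely \cite[Theorem 0.1]{Hofmanova2012} or \cite[Theorem IV.2.4]{Ikeda1981} (in the spirit of Skorokhod and Stroock--Varadhan), to obtain a stochastic basis $\stochbasis[\infty]$ with a normal filtration (after the usual augmentation if necessary), mutually independent real $(\F_t)$-Brownian motions $\beta_1,\dots,\beta_N$, and a continuous adapted $\R^N$-valued process $\alpha$ with $\P\circ\alpha(0)^{-1}=\tilde\P_N$ solving $\dif\alpha_k=(\tilde A(\alpha))_k\,\dt+\sum_{l=1}^N(\tilde B(\alpha))^k_l\,\dif\beta_l$ for $k=1,\dots,N$. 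When $\Lambda$ has finite second moment --- which holds in the intended application, where $\Lambda$ even carries a finite $q$-th moment with $q>2$ --- the standard a priori estimate (Gr\"onwall applied to $\E\sup_{[0,t]}|\alpha|^2$, using the linear growth) yields $\alpha\in L^2(\Omega;C([0,T];\R^N))$ for every $T>0$.

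It then remains to transport the solution back. Setting $u(t)\coloneqq\sum_{k=1}^N\alpha_k(t)e^k$ gives a predictable $V_N$-valued process with $u\in L^2(\Omega;C([0,T];V_N))$ for all $T>0$ and $\sprod[H]{u(t)}{e^k}=\alpha_k(t)$. Testing the SDE for $\alpha$ against $e^k$ and unwinding the definitions of $\tilde A$ and $\tilde B$ reproduces exactly the asserted identity for $v=e^k$, hence for all $v\in V_N$ by linearity; and $\P\circ u(0)^{-1}=\Lambda\circ P_N^{-1}=\Lambda_N$ by construction. This is the full content of the statement.

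I do not expect a genuine obstacle here: the theorem is essentially a translation of a classical finite-dimensional fact into the Gelfand-triple notation. The only points that demand care are (i) checking that the structural assumptions on $A$ and $B$ yield true \emph{continuity} (not merely measurability) and \emph{linear} (not superlinear) growth of $\tilde A,\tilde B$ after restriction to $V_N$, which is where the equivalence of norms on $V_N$ and Bessel's inequality enter, and (ii) recalling that \emph{weak} --- rather than strong --- existence is all that is needed, so no Lipschitz hypothesis is invoked and the solution legitimately comes equipped with its own stochastic basis and driving Brownian motions.
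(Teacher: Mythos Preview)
Your proposal is correct and follows exactly the same route as the paper: reduce to an $\R^N$-valued SDE via the coordinate isomorphism, define $\tilde A,\tilde B$ as you do, and invoke \cite[Theorem 0.1]{Hofmanova2012} and \cite[Theorem IV.2.4]{Ikeda1981} after checking continuity and linear growth. Your verification of these hypotheses is actually more detailed than the paper's (which simply asserts them), and your observation that the claim $u\in L^2(\Omega;C([0,T];V_N))$ tacitly requires a second-moment assumption on $\Lambda$ is a valid caveat that the paper glosses over.
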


  \begin{proposition}[Estimates for the norm]
    \label{prop:variational-bounds}
    Assume that $T > 0$ and $\stochbasis$ is a stochastic basis with a normal filtration.  Then there is a constant $C > 0$ that only depends on the constants from \Cref{assumptions:variational-existence}, such that for all mutually independent real-valued $(\F_t)$-Brownian motions $(\beta_l)_{l\in\N}$, $N \in \N$ and all $V_N$-valued predictable processes $u \in L^2(\Omega; C([0,T]; V_N))$ with
    \begin{align*}
&      \sprod[H]{u(t)}{v} - \sprod[H]{u(0)}{v} \\
      &\qquad= \int_0^t \sprod[V',V]{A(u(s))}{v} \ds + \sum_{l=1}^N \int_0^t \sprod[H]{B(u(s))g_l}{v} \dif \beta_l(s)
    \end{align*}
    $\P$-a.s. for all $t \in [0,T]$ and $v \in V_N$, we have
    \begin{align*}
      \E \sup_{t \in [0,T]} \| u(t) \|_H^2 + \E \int_0^T \| u(t) \|_V^2 \dt \le Ce^{CT}\left(1 + \E \|u(0)\|_H^2\right).
    \end{align*}
    Additionally, there is a $q_0 > 2$ such that $u(0) \in L^q(\Omega;H)$ for some $q \in (2,q_0)$ implies $u \in L^\infty(0,T;L^q(\Omega; H))$ with
    \begin{align*}
      \E \| u(t) \|_H^q \le e^{Ct}\left(1+ \E \| u(0) \|_H^q \right) ~\forall t \in [0,T].
    \end{align*}
    \begin{proof}
      From \Cref{prop:variational-ito} we conclude that the following It\^o formula holds for the norm of solutions
      \begin{align}
        \label{eq:variational-ito-norm}
        \begin{split}
          \dif \| u\|_{H}^2 &= 2 \sprod[V',V]{A(u)}{u} \dt + \sum_{k,l=1}^N  \sprod[H]{ B(u)g_l }{e^k}^2 \dt + \sum_{l=1}^N 2\sprod[H]{B(u)g_l}{u} \dif \beta_l \\
          &=2 \sprod[V',V]{A(u)}{u} \dt + \| B_N(u) \|_{L_2(U;H)}^2 \dt + 2 \sprod[H]{B_N(u) \dW}{u},
        \end{split}
      \end{align}
      where $B_N : V \to L_2(U;H)$ is $B$ restricted to the finite-dimensional subspaces,
      \begin{align*}
        \sprod[H]{B_N(u) g_l}{e^k} \coloneqq \left\{
        \begin{array}{cl}
          \sprod[H]{B(u) g_l}{e^k} & \text{if }  k \le N \text{ and } l \le N,\\
          0 & \text{otherwise,}
        \end{array}\right.
              k,l \in \N, ~ u \in V.
      \end{align*}
      For $q \ge 1$ we use the It\^o formula for real-valued semimartingales to deduce that
      \begin{align*}
        \dif \left(1+\| u \|_{H}^2\right)^{q} &= q \left(1+\| u \|_{H}^2\right)^{q-1} \left( 2 \sprod[V',V]{A(u)}{u} +  \| B_N(u) \|_{L_2(U;H)}^2 \right) \dt \\
                                              &\phantom{{}={}} + 2q(q-1) \left(1+\|u \|_{H}^2\right)^{q-2} \| B_N^\ast(u) u \|_{U}^2 \dt \\
                                              &\phantom{{}={}} + 2 q \left(1+\| u \|_H^2\right)^{q-1} \sprod[H]{B_N(u) \dW}{u}.
      \end{align*}
      By taking the expectation and using the coercivity \eqref{eq:variational-coercivity} as well as the growth bounds \eqref{eq:variational-growth-B} we conclude for $q \in [1,1+\varepsilon)$ with $\varepsilon < \frac{\alpha}{C}$ where $C$ depends on the constants from \eqref{eq:variational-coercivity} and \eqref{eq:variational-growth-B}, that
      \begin{align}
        \label{eq:variational-ito-norm-qth}
        \begin{split}
&          \E \left(1+\| u(t) \|_H^2\right)^{q} - \E \left(1+\| u(0) \|_H^2\right)^{q} \\
          &\le q \E \int_0^t \left(1+\| u(s) \|_H^2\right)^{q-1} \left( 2 \sprod[V',V]{A(u(s))}{u(s)} +  \| B_N(u(s)) \|_{L_2(U;H)}^2\right) \ds \\
          &\phantom{{}={}} + 2q(q-1) \E \int_0^t \left(1+\| u(s) \|_H^2\right)^{q-2} \| B_N^\ast(u(s))u(s) \|_U^2  \ds \\
          &\le q \E \int_0^t \left(1+ \| u(s) \|_H^2\right)^{q-1} \left( -\alpha \| u(s) \|_{V}^2 + C\left(1 + \| u(s) \|_{H}^2\right)\right) \ds \\
          &\phantom{{}={}} + Cq(q-1) \E \int_0^t \left(1+\| u(s) \|_H^2\right)^{q-1} \| B_N^\ast(u(s)) \|_{L_2(H;U)}^2 \ds \\
          &\le -q\left(\alpha - C(q-1)\right)\E \int_0^t \left(1+\| u(s) \|_H^2\right)^{q-1}\| u(s) \|_V^2 \ds \\
          &\phantom{{}={}}+ C \E \int_0^t \left(1 + \| u(s)\|_H^2\right)^{q} \ds \\
          &\le C \E \int_0^t \left(1+ \| u(s) \|_H^2\right)^q \ds ~\forall t \in [0,T]
        \end{split}
      \end{align}
      and with a Gronwall argument
      \begin{align*}
        \E \left(1+\|u(t)\|_H^2\right)^{q} \le e^{Ct} \E \left(1 + \| u(0) \|_H^2\right)^{q} ~\forall t \in [0,T].
      \end{align*}
      This already implies that there is a constant $C > 0$ such that
      \begin{align*}
        \E \| u(t) \|_H^{2q} \le e^{Ct} \left(1 + \E \| u(0) \|_H^{2q} \right) ~\forall t \in [0,T].
      \end{align*}
      Furthermore, we have for the stochastic integral in \eqref{eq:variational-ito-norm} using the Burkholder-Davis-Gundy inequality \cite[Theorem 3.15]{DaPrato2014} and \eqref{eq:variational-growth-B}
      \begin{align}
        \label{eq:variational-sup-stoch}
        \begin{split}
        \E \sup_{t \in [0,T]} \Bigg| 2 \int_0^t &\sprod[H]{B_N(u) \dW}{u} \Bigg|
        \le C \E \left[ \int_0^T \left\| B^\ast_N(u(s))u(s) \right\|_U^2 \ds \right]^{\frac{1}{2}} \\
        &\le C \E \left[ \sup_{t \in [0,T]} \| u(s) \|_H^2 \left(1 + \int_0^T \| u(s) \|_V^2 \ds\right)\right]^{\frac{1}{2}} \\
        &\le  \frac{1}{2} \E \sup_{t \in [0,T]} \| u(s) \|_H^2 + C \left(1+\E \int_0^T \| u(s)\|_V^2 \ds\right).
        \end{split}
      \end{align}
      And from \eqref{eq:variational-ito-norm-qth} for $q=1$ we infer
      \begin{align*}
        \E \int_0^T \| u(s) \|_V^{2} \ds  \le e^{CT} \left(1+\E \| u(0) \|_H^2\right),
      \end{align*}
      hence with \eqref{eq:variational-sup-stoch}
      \begin{align*}
      &  \E \sup_{t \in [0,T]} \| u(t) \|_H^2 \le \E \| u(0) \|_H^2 \\
                                             &\phantom{{}={}}+ \E \sup_{t \in [0,T]} \int_0^t \left[2 \sprod[V',V]{A(u(\tau))}{u(\tau)} + \| B_N(u(\tau)) \|_{L_2(U;H)}^2\right] \dif \tau \\
                                             &\phantom{{}={}} + \E \sup_{t \in [0,T]}  2 \int_0^t \sprod[H]{B_N(u(\tau)) \dW(\tau)}{u(\tau)}  \\
                                             &\le \E \|u(0)\|_H^2  + \frac{1}{2} \E\sup_{t \in [0,T]} \| u(t) \|_H^2 + C\left(1+\E \int_0^T \| u(t) \|_V^2 \dt\right) \\
                                             &\le Ce^{CT}\left(1 + \E \| u(0) \|_H^2 \right) + \frac{1}{2} \E\sup_{t \in [0,T]} \| u(t) \|_H^2
      \end{align*}
      and therefore
      \begin{align*}
        \E \sup_{t \in [0,T]} \| u(t) \|_H^2 \le Ce^{CT}\left(1 + \E \| u(0) \|_H^2 \right).
      \end{align*}
    \end{proof}
  \end{proposition}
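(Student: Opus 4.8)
The plan is to obtain both assertions from the Itô formula for $\|u\|_H^2$. Applying \Cref{prop:variational-ito} on the finite-dimensional space $V_N$ (where the $V'$--$V$ pairing reduces to the $H$-inner product) to the solution of the Galerkin equation gives the energy identity
\begin{align*}
  \dif \|u\|_H^2 = 2\sprod[V',V]{A(u)}{u}\dt + \|B_N(u)\|_{L_2(U;H)}^2\dt + 2\sprod[H]{B_N(u)\dW}{u},
\end{align*}
where $B_N$ denotes $B$ followed by the orthogonal projection onto $\vspan(e^1,\dots,e^N)$. All stochastic integrals below are local martingales, and a routine localization by the stopping times $\tau_R = \inf\{t : \|u(t)\|_H \ge R\}$ together with Fatou's lemma justifies taking expectations; I will suppress this.

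For the higher-moment bound I would view $t \mapsto \|u(t)\|_H^2$ as a real semimartingale and apply the one-dimensional Itô formula to $x \mapsto (1+x)^{q}$, which is smooth on $[0,\infty)$ for every $q \ge 1$. Besides the drift and martingale terms coming from the energy identity, this produces the Itô correction $2q(q-1)(1+\|u\|_H^2)^{q-2}\|B_N^\ast(u)u\|_U^2$. After taking expectations I would insert the coercivity bound \eqref{eq:variational-coercivity} into the energy-identity drift and the elementary estimate
\begin{align*}
  \|B_N^\ast(u)u\|_U^2 \le \|u\|_H^2\,\|B_N^\ast(u)\|_{L_2(H;U)}^2 \le C\|u\|_H^2\bigl(1+\|u\|_V^2\bigr)
\end{align*}
(using \eqref{eq:variational-growth-B}) into the correction term. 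The point is that the two contributions proportional to $\|u\|_V^2$ combine into $-q\bigl(\alpha - C(q-1)\bigr)(1+\|u\|_H^2)^{q-1}\|u\|_V^2$, which is non-positive provided $q-1 < \alpha/C$; this fixes the threshold, so that in the notation of the statement (where the exponent sits on $\|u\|_H$ rather than on $1+\|u\|_H^2$) one may take $q_0 = 2(1+\alpha/C) > 2$. Discarding the non-positive term leaves $\E(1+\|u(t)\|_H^2)^q \le \E(1+\|u(0)\|_H^2)^q + C\int_0^t\E(1+\|u(s)\|_H^2)^q\ds$, and Gronwall gives $\E(1+\|u(t)\|_H^2)^q \le e^{Ct}\E(1+\|u(0)\|_H^2)^q$, hence $\E\|u(t)\|_H^{2q} \le e^{Ct}\bigl(1+\E\|u(0)\|_H^{2q}\bigr)$.

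Specialising to $q=1$ and taking expectations directly in the energy identity also yields $\E\int_0^T\|u(s)\|_V^2\ds \le e^{CT}\bigl(1+\E\|u(0)\|_H^2\bigr)$. With this control of the $V$-norm I would pass to the supremum bound: taking $\sup_{t\le T}$ and then expectations in the energy identity, the drift is estimated by coercivity, and the running maximum of the martingale part is handled with the Burkholder--Davis--Gundy inequality,
\begin{align*}
  \E\sup_{t\in[0,T]}\Bigl|2\!\int_0^t\!\sprod[H]{B_N(u)\dW}{u}\Bigr| &\le C\,\E\Bigl[\int_0^T\|B_N^\ast(u)u\|_U^2\ds\Bigr]^{\frac12} \\
  &\le C\,\E\Bigl[\sup_{t\in[0,T]}\|u\|_H^2\Bigl(1+\int_0^T\|u\|_V^2\ds\Bigr)\Bigr]^{\frac12},
\end{align*}
after which Young's inequality splits the right-hand side into $\tfrac12\E\sup_{t}\|u\|_H^2$, absorbed on the left, plus a term controlled by $1+\E\int_0^T\|u\|_V^2\ds$, which is already bounded. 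This gives $\E\sup_{t\in[0,T]}\|u(t)\|_H^2 \le Ce^{CT}\bigl(1+\E\|u(0)\|_H^2\bigr)$, and re-inserting this into the $V$-estimate closes the argument. The only genuinely delicate point is the sign bookkeeping in the higher-moment step — verifying that the Itô-correction term is absorbed, with the correct constant, by the coercive term, which is exactly what forces $q < q_0$; all other manipulations are routine, and every constant produced along the way depends only on the constants from \Cref{assumptions:variational-existence} and not on $N$.
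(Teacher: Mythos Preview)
Your proposal is correct and follows essentially the same route as the paper's proof: the It\^o formula for $\|u\|_H^2$, the one-dimensional It\^o formula applied to $(1+x)^q$, absorption of the It\^o-correction term into the coercive term for $q-1<\alpha/C$ followed by Gronwall, and then the BDG--Young argument for the supremum bound using the $q=1$ case to control $\E\int_0^T\|u\|_V^2$. The only cosmetic differences are that you spell out the localization and give the explicit value of $q_0$.
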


  \begin{lemma}
    \label{lem:variational-continuity}
    Let $T > 0$ and
    \begin{align*}
      \mathcal{X}_u \coloneqq (L^2(0,T;V),w) \cap L^2(0,T;H) \cap C([0,T];(H,w))
    \end{align*}
    with $C([0,T]; (H,w))$ endowed with the compact-open topology.
    Then for each $v \in V$ the mappings $\mathcal{A} : \mathcal{X}_u \to L^p(0,T)$ for $p < 2$ and $\mathcal{B}^\ast : \mathcal{X}_u \to L^2(0,T;U)$ with
    \begin{align*}
      (\mathcal{A}(u))(t) &\coloneqq \sprod[V',V]{A(u(t))}{v}  \text{ and}\\
      (\mathcal{B}^\ast(u))(t) &\coloneqq B^\ast(u(t))v, ~t \in [0,T]
    \end{align*}
    are sequentially continuous.

    \begin{proof}
      
      Let $(u_k)_k \subset \mathcal{X}_u$ be a sequence with $u_k \to u$ in $\mathcal{X}_u$.
      For $M > 0$ we consider the functions
      \begin{align*}
        u_k^M(t) \coloneqq \left\{
        \begin{array}{cl}
          u_k(t) &\text{if } \| u_k(t) \|_V \le M, \\
          u(t) &\text{otherwise}
        \end{array}
                 \right., ~t\in[0,T], k \in \N.
      \end{align*}
      Since $u \in L^2(0,T;V)$ we conclude that for almost every $t \in [0,T]$ the sequence $(u_k^M(t))_{k\in\N}$ is uniformly bounded in $V$. Furthermore we know for every $t \in [0,T]$ that $u_k^M(t) \rightharpoonup u(t)$ in $H$, because $u_k \to u$ in $\mathcal{X}_u$ implies
      \begin{align*}
        \left| \sprod[H]{u_k^M(t) - u(t)}{h} \right| & \le \left| \sprod[H]{u_k(t) - u(t)}{h} \right| \to 0 \text{ as } k\to\infty ~\forall h \in H.
      \end{align*}
      Hence $u_k^M(t) \rightharpoonup u(t)$ in $V$ for $k \to \infty$ for almost every $t \in [0,T]$.

      The continuity assumptions \eqref{eq:variational-continuity-A} on $A$ and \eqref{eq:variational-continuity-B} on $B$ imply
      \begin{align*}
        (\mathcal{A}(u_k^M))(t) \to (\mathcal{A}(u))(t), \\
        (\mathcal{B}^\ast(u_k^M))(t) \to (\mathcal{B}^\ast(u))(t)
      \end{align*}
      for almost every $t \in (0,T)$. Furthermore, using \eqref{eq:variational-growth-A} we get
      \begin{align*}
        \left|(\mathcal{A}(u_k^M))(t)\right|^2 &\le C\left(1 + \|u_k(t)\|_V^2\right) \| v \|_V^2.
      \end{align*}
      Now, with Vitali's convergence theorem we infer that $\mathcal{A}(u_k^M) \to \mathcal{A}(u)$ in $L^p(0,T)$ for all $p < 2$.

      For $\mathcal{B}^\ast$ we have with the growth bound \eqref{eq:variational-growth-B-to-V'}
      \begin{align*}
&        \Big\| (\mathcal{B}^\ast(u_k^M))(t) - (\mathcal{B}^\ast(u))(t) \Big\|_U^2 
       \\
       & \le \left( \| B(u_k^M(t)) \|^2_{L(U;V')} + \| B(u(t)) \|^2_{L(U;V')} \right) \| v \|_V^2 \\
        &\le C\left(1 + \| u_k(t) \|_V^{2-\delta} + \| u(t) \|_V^{2-\delta} + \| u_k(t) \|_H^2 + \| u(t) \|_H^2 \right) \| v \|_V^2.
      \end{align*}
      The right hand side is uniformly integrable, because $\|u_k(t)\|_V^{2-\delta}$ is bounded in $L^{\frac{2}{2-\delta}}(0,T)$ and $\| u_k(t) \|_H^2$ is convergent in $L^1(0,T)$. Therefore by Vitali's convergence theorem $\mathcal{B}^\ast(u_k^M) \to \mathcal{B}^\ast(u)$ in $L^2(0,T;U)$.

      Let $E_k^M \coloneqq \{ t \in [0,T] \mid \| u_k(t) \|_V > M \}$ for $k \in \N$. For the measure of $E_k^m$ we estimate
      \begin{align*}
        \left| E_k^m \right| \le \int_0^T \frac{\| u_k(t) \|_V^2}{M^2} \dt \le \frac{C}{M^2}, 
      \end{align*}
      because $(u_k)_{k\in\N}$ is uniformly bounded in $L^2(0,T;V)$.

      As above one can conclude from the growth assumptions \eqref{eq:variational-growth-A} and \eqref{eq:variational-growth-B-to-V'} and the fact that $\| u_k^M(t) \|_V \le \| u_k(t) \|_V + \| u(t) \|_V$ and $\| u_k^M(t) \|_H \le \| u_k(t) \|_H + \| u(t) \|_H$ that
      \begin{gather*}
        \left| \mathcal{A}(u_k) - \mathcal{A}(u_k^M) \right|^p \text{ and }
        \left\| \mathcal{B}^\ast(u_k) - \mathcal{B}^\ast(u_k^M) \right \|_U^2
      \end{gather*}
      are uniformly integrable with respect to $k$ and $M$. Hence,
      \begin{align*}
        &\left\| \mathcal{A}(u_k) - \mathcal{A}(u) \right\|_{L^p(0,T)} \\
        &\qquad\le \left\| \mathcal{A}(u_k) - \mathcal{A}(u_k^M) \right\|_{L^p(E_k^M)} + \left\| \mathcal{A}(u_k^M) - \mathcal{A}(u) \right\|_{L^p(0,T)}
              \end{align*}
and
         \begin{align*}
      &  \left\| \mathcal{B}^\ast(u_k) - \mathcal{B}^\ast(u) \right\|_{L^2(0,T;U)} \\
      &\qquad\le  \left\| \mathcal{B}^\ast(u_k) - \mathcal{B}^\ast(u_k^M) \right\|_{L^2(E_k^M;U)} + \left\| \mathcal{B}^\ast(u_k^M) - \mathcal{B}^\ast(u) \right\|_{L^2(0,T;U)}
      \end{align*}
      converge to $0$ by first choosing $M$ large such that the first terms on the right hand side become small and then choosing $k$ large and using the convergences derived above.
    \end{proof}
  \end{lemma}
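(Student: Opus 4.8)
The plan is to fix a sequence $(u_k)_k \subset \mathcal{X}_u$ with $u_k \to u$ in $\mathcal{X}_u$ and establish the two claimed convergences. Convergence in $\mathcal{X}_u$ unpacks into three pieces: $u_k \rightharpoonup u$ in $L^2(0,T;V)$, $u_k \to u$ strongly in $L^2(0,T;H)$, and $u_k(t) \rightharpoonup u(t)$ in $H$ for every $t \in [0,T]$. The obstruction is that the continuity hypotheses \eqref{eq:variational-continuity-A} and \eqref{eq:variational-continuity-B} require weak convergence \emph{in $V$} at (almost) every time $t$, whereas a priori we only control boundedness of $(u_k)$ in $V$ in the time-integrated sense together with time-pointwise weak convergence in the larger space $H$.

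To bridge this gap I would introduce, for $M>0$, the truncated processes
\begin{align*}
  u_k^M(t) \coloneqq \begin{cases} u_k(t) & \text{if } \|u_k(t)\|_V \le M, \\ u(t) & \text{otherwise.} \end{cases}
\end{align*}
Since $u \in L^2(0,T;V)$, for a.e.\ $t$ the sequence $(u_k^M(t))_k$ is bounded in $V$ by $\max\{M, \|u(t)\|_V\}$, and $u_k^M(t) \rightharpoonup u(t)$ in $H$ holds regardless of which branch of the truncation is active; together these give $u_k^M(t) \rightharpoonup u(t)$ in $V$ for a.e.\ $t$. Then \eqref{eq:variational-continuity-A} and \eqref{eq:variational-continuity-B} yield the pointwise convergences $\mathcal{A}(u_k^M)(t) \to \mathcal{A}(u)(t)$ and $\mathcal{B}^\ast(u_k^M)(t) \to \mathcal{B}^\ast(u)(t)$ for a.e.\ $t$, while the growth bounds \eqref{eq:variational-growth-A} and \eqref{eq:variational-growth-B-to-V'} provide domination by uniformly integrable functions: since $(u_k)$ is bounded in $L^2(0,T;V)$, the families $\|u_k\|_V^p$ (for $p<2$) and $\|u_k\|_V^{2-\delta}$ are bounded in $L^{2/p}(0,T)$ and $L^{2/(2-\delta)}(0,T)$ respectively, and $\|u_k\|_H^2$ even converges in $L^1(0,T)$. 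Vitali's convergence theorem then gives, for each fixed $M$, that $\mathcal{A}(u_k^M) \to \mathcal{A}(u)$ in $L^p(0,T)$ for $p<2$ and $\mathcal{B}^\ast(u_k^M) \to \mathcal{B}^\ast(u)$ in $L^2(0,T;U)$.

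It remains to remove the truncation. On the exceptional set $E_k^M \coloneqq \{ t \in [0,T] : \|u_k(t)\|_V > M \}$ Chebyshev's inequality gives $|E_k^M| \le C M^{-2}$ uniformly in $k$, using the $L^2(0,T;V)$-bound on $(u_k)$. Bounding $\|u_k^M\|_V \le \|u_k\|_V + \|u\|_V$ and $\|u_k^M\|_H \le \|u_k\|_H + \|u\|_H$ and invoking \eqref{eq:variational-growth-A}, \eqref{eq:variational-growth-B-to-V'} once more, one checks that $|\mathcal{A}(u_k) - \mathcal{A}(u_k^M)|^p$ and $\|\mathcal{B}^\ast(u_k) - \mathcal{B}^\ast(u_k^M)\|_U^2$ are uniformly integrable jointly in $k$ and $M$, so their integrals over the small set $E_k^M$ can be made arbitrarily small uniformly in $k$ by taking $M$ large. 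A triangle-inequality argument — first choosing $M$ large to control $\mathcal{A}(u_k) - \mathcal{A}(u_k^M)$ on $E_k^M$, then $k$ large to control $\mathcal{A}(u_k^M) - \mathcal{A}(u)$ on all of $(0,T)$, and likewise for $\mathcal{B}^\ast$ — completes the proof. I expect the main obstacle to be precisely the time-pointwise-versus-time-integrated regularity mismatch that the auxiliary family $u_k^M$ is designed to overcome; once it is in place the rest is a routine application of Vitali's theorem and Chebyshev's inequality.
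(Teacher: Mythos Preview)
Your proposal is correct and follows essentially the same approach as the paper: the same truncation $u_k^M$, the same promotion of pointwise weak convergence in $H$ to weak convergence in $V$ via boundedness, the same appeal to Vitali's theorem using the growth bounds \eqref{eq:variational-growth-A} and \eqref{eq:variational-growth-B-to-V'}, and the same Chebyshev estimate on $E_k^M$ followed by the $M$-then-$k$ triangle-inequality argument. There is nothing substantive to add.
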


  \begin{proof}[Proof of \Cref{thm:variational-existence}]
    For $N \in \N$ let $V_{N}:=  \vspan(\{e^1, \dotsc, e^N \})$ and consider the $V_N$-valued process $u^N$ from \Cref{thm:variational-sde-existence}. The process $u^N$ is a weak solution of the finite-dimensional approximation of \eqref{eq:variational-spde} for a Wiener process $W^N$ on $U$ with covariance operator $Q_N : U \to \vspan \left(\{ g_1, \dotsc, g_N \}\right)$, which is the orthogonal projection. We can assume that the processes $(u^N)_{N \in \N}$ are defined on one common probability space $(\Omega, \F, \P)$ = $([0,1], \mathcal{B}([0,1]), \mathcal{L})$, because the proof of \cite[Theorem 0.1]{Hofmanova2012} could be adapted to yield existence of weak solutions for the finite-dimensional approximation on this particular space. (cf. \cite[Theorem IV.2.3 and Theorem IV.2.4]{Ikeda1981})

    Furthermore, we can always assume that $q > 2$ is sufficiently small such that the following arguments hold.  We can apply \Cref{prop:variational-bounds} to infer that for all $T > 0$ the sequence $(u^N)_{N \in \N}$ is in $N \in \N$ uniformly bounded in
    \begin{align*}
      L^2(\Omega ; C([0,T]; H)) \cap L^2(\Omega ; L^2(0,T;V)) \cap L^\infty(0,T;L^q(\Omega; H)).
    \end{align*}
    Let $Z$ be another separable Hilbert space with a Hilbert-Schmidt embedding $V' \subset Z$. 
    \begin{sloppypar}Because of \eqref{eq:variational-growth-B-to-V'} we have that $B_N(u^N)$ is uniformly bounded in $L^q(\Omega; L^q(0,T;L_2(U;Z)))$, since
    \begin{align*}
      \E \int_0^T \| B_N(u^N(t)) \|_{L_2(U;Z)}^q \dt
      &\le C \E \int_0^T \| B_N(u^N(t)) \|_{L(U;V')}^q \dt \\
      &\hspace{-5mm}\le C\left(1 + \E \int_0^T \| u^N(t) \|_V^2 + \sup_{t \in [0,T]} \E \| u^N(t) \|_H^q\right).
    \end{align*}
    \end{sloppypar}
    Using the factorization method \cite[Theorem 1.1]{Seidler1993} we get a uniform bound for $u^N \in L^2(\Omega; C^{0,\lambda}([0,T];Z))$ for some $\lambda > 0$.

    Now, consider another separable Hilbert space $U_1$ which is the completion of $U$ with the respect to the scalar product $\sprod[U_1]{g_{l_1}}{g_{l_2}} = a_{l_1}^2\sprod[U]{g_{l_1}}{g_{l_2}}$ for $l_1, l_2 \in \N$ and $(a_l)_{l \in \N} \subset \R$ a square-summable sequence. Then $U$ is densely embedded in $U_1$ with a Hilbert-Schmidt embedding and each $W^N$ can be understood as a Wiener process on $U_1$ with covariance operators uniformly bounded in $L_1(U_1)$. Hence, with a factorization argument for $\lambda \in (0,\frac{1}{2})$ the $(W^N)_N$ are uniformly bounded in $L^2(\Omega; C^{0,\lambda}([0,T];U_1))$. For $\lambda > 0$ the embeddings
    \begin{align*}
      C^{0,\lambda}([0,T]; Z) \cap C([0,T];H) &\to C([0,T]; (H,w)) \text{ and} \\
      C^{0,\lambda}([0,T]; Z) \cap L^2(0,T;V) &\to L^2(0,T;H)
    \end{align*}
    are compact because of \cite[Theorem 5]{Simon1987} and the Ascoli theorem \cite[Theorem 7.17]{Kelley1955}. And also the embedding
    \begin{align*}
      C^{0,\lambda}([0,T]; U_1) &\to C([0,T]; (U_1, w))
    \end{align*}
    is compact because of the Ascoli theorem \cite[Theorem 7.17]{Kelley1955}. Thus, the joint laws of $(u^N, W^N)$ are tight in $\mathcal{X}_u^T \times \mathcal{X}_W^T$     with
    \begin{align*}
      \mathcal{X}_u^T &\coloneqq  C([0,T]; (H,w))  \cap L^2(0,T;H) \cap \left(L^2(0,T;V), w\right) \text{ and}\\
      \mathcal{X}_W^T &\coloneqq C\left([0,T] ; (U_1, w)\right).
    \end{align*}
    Since this holds for all $T > 0$ and since a set is compact in $\mathcal{X}_u \times \mathcal{X}_W$ with
    \begin{align*}
      \mathcal{X}_u &\coloneqq C_{\text{loc}}([0,\infty); (H,w))  \cap L^2_{\text{loc}}(0,\infty;H) \cap \left(L^2_{\text{loc}}(0,\infty;V), w\right) \text{ and }\\
      \mathcal{X}_W &\coloneqq C_{\text{loc}}\left([0,\infty) ; (U_1, w)\right),
    \end{align*}
    where $C_{\text{loc}}([0,\infty); (H,w))$ and $C_{\text{loc}}([0,\infty); (U_1, w))$ are endowed with the compact-open topology,
    if and only if for all $T > 0 $ the set (with all of its elements restricted to $[0,T]$) is compact in $\mathcal{X}_u^T \times \mathcal{X}_W^T$, we conclude similarly to \cite[Proof of Proposition 4.3]{Breit2018} that the joint laws of $(u^N, W^N)$ are tight in $\mathcal{X}_u \times \mathcal{X}_W$.
      
    Because of \Cref{lem:jakubowski-topological-assumption} we can apply the Jakubowski-Skorokhod representation theorem for tight sequences in nonmetric spaces \cite[Theorem 2]{Jakubowski1997} to deduce the existence of a probability space $(\tilde{\Omega}, \tilde{\F}, \tilde{\P})$, an strictly increasing sequence $(N_m)_{m \in \N} \subset \N$, $\mathcal{X}_u$-valued random variables $\tilde{u}^m$, $\tilde{u}$ and $\mathcal{X}_W$-valued random variables $\tilde{W}^m$, $\tilde{W}$ for $m \in \N$ such that
    \begin{align*}
      \tilde{u}^m \to \tilde{u} ~&\tilde{\P}\text{-a.s. in } \mathcal{X}_u, \\
      \tilde{W}^{m} \to \tilde{W} ~&\tilde{\P}\text{-a.s. in } \mathcal{X}_W
    \end{align*}
    and the joint law of $(\tilde{u}^m, \tilde{W}^{m})$ coincides with the joint law of $(u^{N_m}, W^{N_m})$ for all $m \in \N$.  To simplify the notation, we will assume that $N_m = m$ for $m \in \N$.

    Let $(\G_t)_{t \in [0,\infty)}$ be the natural filtration of the process $(\tilde{u}, \tilde{W})$. That means $\G_t$ for $t \in [0,\infty)$ is the smallest $\sigma$-algebra such that $\tilde{u}(s) : \tilde{\Omega} \to H$ and $\tilde{W}(s) : \tilde{\Omega} \to U_1$ are measurable for all $s \in [0,t]$. The Pettis measurability theorem implies that for the Borel $\sigma$-algebras on $H$ and $U_1$ we have $\B((H, \| \cdot \|_H)) = \B((H, w))$ and $\B((U_1, \| \cdot \|_{U_1})) = \B((U_1, w))$.
    Therefore $\G_t$ is contained in the $\sigma$-algebra generated by $\tilde{u}|_{[0,t]} : \tilde{\Omega} \to C([0,t]; (H, w))$ and $\tilde{W}|_{[0,t]} : \tilde{\Omega} \to C([0,t]; (U_1,w))$. Choosing dense subsets of $[0,t]$ and $H$ respectively $U_1$, one can also show that $\tilde{u}|_{[0,t]}$ and $\tilde{W}|_{[0,t]}$ are measurable with respect to $\G_t$. Hence $\G_t$ is exactly the $\sigma$-algebra generated by $\tilde{u}|_{[0,t]}$ and $\tilde{W}|_{[0,t]}$.
    
    Let $\mathcal{N} \coloneqq \{ M \in \tilde{\F} \mid \tilde{\P}(M) = 0 \}$. We will consider the augmented filtration $(\tilde{\F}_t)_{t \in [0,\infty)}$ which is defined by
    \begin{align*}
      \tilde{\F}_t &\coloneqq \bigcap_{s > t} \sigma\left( \G_s \cup \mathcal{N} \right), ~t\in[0,\infty). 
    \end{align*}
    The augmented filtration $(\tilde{\F}_t)_t$ is a normal filtration.
    For $m \in \N$ we can do the same construction to define the natural filtration $(\G_t^m)_t$ and the augmented filtration $(\tilde{\F}_t^m)_t$ of $(\tilde{u}^m, \tilde{W}^m)$.

    We fix $k \in \N$ and define for $t\in[0,\infty)$
    \begin{align}
      \label{eq:variational-martingale}
      \begin{split}
      \tilde{M}(t) &\coloneqq \sprod[H]{\tilde{u}(t)}{e^k} - \sprod[H]{\tilde{u}(0)}{e^k}  - \int_0^t \sprod[V',V]{A(\tilde{u}(s))}{e^k} \ds \\
      \tilde{M}^m(t) &\coloneqq \sprod[H]{\tilde{u}^m(t)}{e^k} - \sprod[H]{\tilde{u}^m(0)}{e^k}  - \int_0^t \sprod[V',V]{A(\tilde{u}^m(s))}{e^k} \ds \\
      M^m(t) &\coloneqq \sprod[H]{u^{m}(t)}{e^k} - \sprod[H]{u^{m}(0)}{e^k}  - \int_0^t \sprod[V',V]{A(u^{m}(s))}{e^k} \ds.
      \end{split}
    \end{align}
    For $t \in [0,\infty)$ we have
    \begin{align*}
      M^m(t) = \int_0^t \sprod[H]{B(u^m(s)) \dW^m(s)}{e^k}.
    \end{align*}
    For $s \in [0,t]$ let
    \begin{align*}
      \gamma : C([0,s]; (H, w)) \times C([0,s]; (U_1, w)) \to \R
    \end{align*}
    be a bounded and continuous function.
    We will use the abbreviations
    \begin{align*}
      \gamma^m &\coloneqq \gamma\left( u^{m}|_{[0,s]}, W^m|_{[0,s]} \right),\\
      \tilde{\gamma}^m &\coloneqq \gamma\left( \tilde{u}^{m}|_{[0,s]}, \tilde{W}^m|_{[0,s]} \right), \\
      \tilde{\gamma} &\coloneqq \gamma\left( \tilde{u}|_{[0,s]}, \tilde{W}|_{[0,s]} \right).
    \end{align*}
    Since the joint law of $(\tilde{u}^m, \tilde{W}^m)$ coincides with the joint law of $(u^m, W^m)$, we infer for $l_1, l_2 \in \N$ and $m$ large enough that
    \begingroup
    \allowdisplaybreaks
    \begin{align}
      \label{eq:variational-wiener}
      \begin{split}
        0 &= \E \left( \gamma^m \left( W^m(t) - W^m(s) \right) \right) \\
        & = \tilde{\E} \left( \tilde{\gamma}^m \left( \tilde{W}^m(t) - \tilde{W}^m(s) \right) \right), \\
        \left(t-s\right) \delta_{l_1, l_2} a_{l_1}^2 &= {\E} \left( \gamma^m \sprod[U_1]{W^m(t)}{g_{l_1}}\sprod[U_1]{{W}^m(t)}{g_{l_2}} \right) \\
        &\phantom{{}={}} - {\E} \left(\gamma^m \sprod[U_1]{{W}^m(s)}{g_{l_1}}\sprod[U_1]{{W}^m(s)}{g_{l_2}} \right) \\
        &= \tilde{\E} \left( \tilde{\gamma}^m \sprod[U_1]{\tilde{W}^m(t)}{g_{l_1}}\sprod[U_1]{\tilde{W}^m(t)}{g_{l_2}}\right) \\
        &\phantom{{}={}} -\tilde{\E}\left( \tilde{\gamma}^m \sprod[U_1]{\tilde{W}^m(s)}{g_{l_1}}\sprod[U_1]{\tilde{W}^m(s)}{g_{l_2}} \right) \\
      \end{split}
    \end{align}
    and
    \begin{align}
      \label{eq:variational-quadvar}
      \begin{split}
        0 &= \E \left( \gamma^m \left( M^m(t) - M^m(s) \right) \right) \\
        &= \tilde{\E} \left( \tilde{\gamma}^m \left( \tilde{M}^m(t) - \tilde{M}^m(s) \right) \right), \\
        0 &= \E \left( \gamma^m \left( (M^m)^2(t) - (M^m)^2(s) - \int_s^t \left\| Q_m B^\ast(u^m(\sigma)) e^k \right\|_{U}^2 \dif \sigma \right) \right) \\
        &= \tilde{\E} \left( \tilde{\gamma}^m \left( (\tilde{M}^m)^2(t) - (\tilde{M}^m)^2(s) - \int_s^t \left\| Q_m B^\ast(\tilde{u}^m(\sigma)) e^k \right\|_{U}^2 \dif \sigma \right) \right), \\
        0 &= \E \left( \gamma^m \left( M^m(t)\sprod[U]{W^m(t)}{g_{l_1}} - M^m(s)\sprod[U]{W^m(s)}{g_{l_1}}\right)\right) \\
        &\phantom{{}={}} - \E\left(\gamma^m\int_s^t \sprod[H]{B(u^{m}(\sigma)) g_{l_1}}{e^k} \dif \sigma \right) \\
        &= \tilde{\E} \left( \tilde{\gamma}^m\left( \tilde{M}^m(t)\sprod[U]{\tilde{W}^m(t)}{g_{l_1}} - \tilde{M}^m(s)\sprod[U]{\tilde{W}^m(s)}{g_{l_1}} \right) \right) \\
        &\phantom{{}={}} -\tilde{\E} \left(\tilde{\gamma}^m \int_s^t \sprod[H]{B(\tilde{u}^{m}(\sigma)) g_{l_1}}{e^k} \dif \sigma \right).
      \end{split}
    \end{align}
    The Burkholder-Davis-Gundy inequality for $W^m$ yields the uniform bound
    \begin{align*}
      \tilde{\E} \| \tilde{W}^m(t) \|_{U_1}^3 = \E \| W^m(t) \|_{U_1} &\le C t^{\frac{3}{2}}.
    \end{align*}
    Now, with the Vitali convergence theorem we can pass to the limit in the equations \eqref{eq:variational-wiener} and infer
    \begin{align}
      \label{eq:variational-wiener-limit}
      \begin{split}
        0 &= \tilde{\E} \left( \tilde{\gamma} \left( \tilde{W}(t) - \tilde{W}(s) \right) \right), \\
        \left(t-s\right) \delta_{l_1, l_2} a_{l_1}^2 &= \tilde{\E} \left( \tilde{\gamma} \sprod[U_1]{\tilde{W}(t)}{g_{l_1}}\sprod[U_1]{\tilde{W}(t)}{g_{l_2}}\right) \\
        &\phantom{{}={}}- \tilde{\E}\left(\tilde{\gamma} \sprod[U_1]{\tilde{W}(s)}{g_{l_1}}\sprod[U_1]{\tilde{W}(s)}{g_{l_2}} \right). \\
      \end{split}
    \end{align}
    
    \endgroup
    Similarly, because of \Cref{lem:variational-continuity} and the convergence $Q_m \to \Id$ in $L(U)$, we conclude that in each of the above equations in \eqref{eq:variational-quadvar} we have the pointwise convergence of the variables for $m \to \infty$.  Furthermore, the Burkholder-Davis-Gundy inequality for $M^m$, the growth bound \eqref{eq:variational-growth-B-to-V'} and the estimates in \Cref{prop:variational-bounds} imply for some $q > 2$
    \begin{align*}
      \tilde{\E} |\tilde{M}^m(t)|^q = \E |M^m(t)|^q &\le C \E \left[ \int_0^t \left\| B^\ast(u^m(s)) e^k \right\|_U^2 \ds \right]^{\frac{q}{2}} \\
                                    &\le C\E \left[ \int_0^t \left\| B(u^m(s)) \right\|_{L(U;V')}^2 \ds \right]^{\frac{q}{2}} \| e^k \|_{V}^q \\
                                    &\le C_{k}\left(1 + \E \int_0^T \| u^m(s)\|_V^{2} \ds + \sup_{s \in [0,T]} \E \| u^m(s) \|_H^q \right) \\
                                    &\le C_{k,t}\left(1 + \E \| u^m(0) \|_H^q \right) \\
                                    &\le C_{k,t}\left(1 + \int_{H} \| z \|_H^q \dif\Lambda(z)\right).
    \end{align*}
    Again with the Vitali convergence theorem, we can pass to the limit in the equations \eqref{eq:variational-quadvar} and infer
    \begin{align}
      \label{eq:variational-quadvar-limit}
      \begin{split}
        0 &= \tilde{\E} \left( \tilde{\gamma} \left( \tilde{M}(t) - \tilde{M}(s) \right) \right), \\
        0 &= \tilde{\E} \left( \tilde{\gamma} \left( (\tilde{M})^2(t) - (\tilde{M})^2(s) - \int_s^t \left\| Q_m B^\ast(\tilde{u}(\sigma)) e^k \right\|_{U}^2 \dif \sigma \right) \right), \\
        0 &=  \tilde{\E} \left( \tilde{\gamma}\left( \tilde{M}(t)\sprod[U]{\tilde{W}(t)}{g_{l_1}} - \tilde{M}(s)\sprod[U]{\tilde{W}(s)}{g_{l_1}} \right) \right)\\
        &\phantom{{}={}} -\tilde{\E} \left(\tilde{\gamma} \int_s^t \sprod[H]{B(\tilde{u}(\sigma)) g_{l_1}}{e^k} \dif \sigma \right.
      \end{split}
    \end{align}
    Since the equations in \eqref{eq:variational-wiener-limit} hold for all $\gamma$, we conclude that $\tilde{W}$ is a square-integrable $(\G_t)_t$-martingale with $(\G_t)_t$-quadratic variation in $U$ given by
    \begin{align}
      \label{eq:variational-wiener-quadvar}
      \begin{split}
      \quadvar{ \tilde{W}(t) } &= tI.
      \end{split}
    \end{align}
    Since $\tilde{W}$ is continuous, we infer that $\tilde{W}$ is also a square-integrable $(\tilde{\F}_t)_t$-martingale and \eqref{eq:variational-wiener-quadvar} also holds for the quadratic variation with respect to $(\tilde{\F}_t)_t$. By the L\'evy martingale characterization \cite[Theorem 4.6]{DaPrato2014} we conclude that $\tilde{W}$ is a cylindrical $(\tilde{\F}_t)_t$-Wiener process on $U$.
    Similarly, as \eqref{eq:variational-quadvar-limit} holds for all $\gamma$, we conclude that $\tilde{M}$ is a square-integrable $(\G_t)_t$-martingale. Since $\tilde{M}$ is continuous by definition \eqref{eq:variational-martingale}, it is also a square-integrable $(\tilde{\F}_t)_t$-martingale. From \eqref{eq:variational-quadvar-limit} we also infer
    \begin{align}
      \label{eq:variational-quadvar-quadvar}
      \begin{split}
      \quadvar{ \tilde{M} - \int_0^\cdot \sprod[H]{B(\tilde{u}(s)) \dif\tilde{W}(s)}{e^k} } &= 0.
      \end{split}
    \end{align}
    Thus
    \begin{align*}
      \tilde{M}(t) - \tilde{M}(s) = \int_s^t \sprod[H]{B(\tilde{u}(\sigma)) \dW(\sigma)}{e^k} ~\tilde{\P}\text{-a.s.}
    \end{align*}
    for all $0 \le s \le t < \infty$ and $k \in \N$.  Furthermore we have
    \begin{align*}
      \Lambda \overset{\ast}{\leftharpoonup} \P \circ u^m(0)^{-1} = \tilde{\P} \circ \tilde{u}^m(0)^{-1} \overset{\ast}{\rightharpoonup} \tilde{\P} \circ \tilde{u}(0)^{-1}.
    \end{align*}
    Continuity of $\tilde{u}$ follows from \Cref{prop:variational-ito}.
  \end{proof}

  \begin{lemma}
    \label{lem:jakubowski-topological-assumption}
    Let $U$ be a separable Hilbert space. Then the following spaces have the property, that there is a countable set of real-valued continuous functions on this space that separates points:
    \begin{itemize}
    \item $C_{\text{loc}}([0,\infty);U)$,
    \item $L^2_{\text{loc}}(0,\infty; U)$,
    \item $(L^2_{\text{loc}}(0,\infty; U); w)$ and
    \item $C_{\text{loc}}([0,\infty); (U,w))$ with the compact-open topology.
    \end{itemize}
    \begin{proof}
      Fix dense and countable subsets $Q \subset [0,\infty)$ and $V \subset U$.
      Consider the set of functions $\mathfrak{F} = \{ u \mapsto \sprod[U]{u(q)}{v} \mid q \in Q, v \in V \}$. Then $\mathfrak{F}$ is a countable set of real-valued continuous functions on $C_{\text{loc}}([0,\infty);(U,w))$ that separates points. Since $C_{\text{loc}}([0,\infty); U) \subset C_{\text{loc}}([0,\infty); (U,w))$ is continuously embedded, the functions in $\mathfrak{F}$ are also continuous on $C_{\text{loc}}([0,\infty); U)$ and separate points.

      The space $L^2(0,\infty;U)$ is a separable Hilbert space. Let $\mathfrak{G} \subset C_c(0,\infty;U) \subset \left(L^2_{\text{loc}}(0,\infty;U)\right)^\ast$ be a countable set which is dense in $L^2(0,\infty;U)$. Then $\mathfrak{G}$ is a set of continuous functions on $L^2_{\text{loc}}(0,\infty;U)$ and $(L^2_{\text{loc}}(0,T;U),w)$ that separates points on both spaces.
    \end{proof}
  \end{lemma}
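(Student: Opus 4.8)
The plan is to exhibit the required countable separating families explicitly, treating the two spaces of continuous paths together and the two $L^2_{\text{loc}}$ spaces together, in each case building the functionals from a countable dense set.

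First I would fix a countable dense set $Q\subset[0,\infty)$ and a countable dense set $D\subset U$; since $U$ is a Hilbert space, the functionals $\langle\cdot,d\rangle_U$, $d\in D$, already separate the points of $U$. For $C_{\text{loc}}([0,\infty);(U,w))$ I would use the countable family $\Phi\coloneqq\{\,u\mapsto\langle u(q),d\rangle_U : q\in Q,\ d\in D\,\}$. Each of these maps is continuous, because evaluation at a fixed time is continuous for the compact-open topology (the preimage of a weakly open set under $\mathrm{ev}_q$ is subbasic) and $\langle\cdot,d\rangle_U$ is continuous on $(U,w)$. To see that $\Phi$ separates points, suppose $u_1\neq u_2$; then $u_1(t_0)\neq u_2(t_0)$ for some $t_0$, hence $\langle u_1(t_0)-u_2(t_0),d\rangle_U\neq 0$ for some $d\in D$, and since the paths are \emph{weakly} continuous the function $t\mapsto\langle u_1(t)-u_2(t),d\rangle_U$ is continuous, so it is nonzero on a neighbourhood of $t_0$, which meets $Q$. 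Finally, because $C_{\text{loc}}([0,\infty);U)\hookrightarrow C_{\text{loc}}([0,\infty);(U,w))$ is continuous, the restrictions of the maps in $\Phi$ settle the space $C_{\text{loc}}([0,\infty);U)$ as well.

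For the two $L^2$ spaces I would pair against compactly supported test functions. For each $n\in\N$ fix a countable set $\mathcal{G}_n\subset C_c((0,n);U)$ that is dense in $L^2(0,n;U)$ (possible by separability of that space), put $\mathcal{G}\coloneqq\bigcup_{n\in\N}\mathcal{G}_n$, and consider $\Psi\coloneqq\{\,u\mapsto\int_0^\infty\langle u(t),g(t)\rangle_U\,\dt : g\in\mathcal{G}\,\}$. If $g\in\mathcal{G}_n$ then the integral involves only $u|_{(0,n)}$, so each element of $\Psi$ is a well-defined continuous linear functional on $L^2_{\text{loc}}(0,\infty;U)$ (bounded by $\|g\|_{L^2}$ times the $n$-th seminorm), hence also continuous on $(L^2_{\text{loc}}(0,\infty;U),w)$ by definition of the weak topology. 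If $u_1\neq u_2$ in $L^2_{\text{loc}}$ they differ in $L^2(0,n;U)$ for some $n$, and density of $\mathcal{G}_n$ yields $g$ with $\int_0^n\langle u_1(t)-u_2(t),g(t)\rangle_U\,\dt\neq 0$; thus $\Psi$ separates points on both $L^2$ spaces.

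The argument is elementary and no step presents a genuine obstacle. The only two points that require a little care are: one must invoke the \emph{weak} continuity of the paths (not mere measurability) in order to pass from an arbitrary separating time $t_0$ to a rational $q\in Q$ in the first part; and the test functions in the second part must be chosen with bounded support, so that the defining integrals make sense on $L^2_{\text{loc}}(0,\infty;U)$ rather than only on $L^2(0,\infty;U)$ — which is why the countable families $\mathcal{G}_n$ are taken inside $C_c((0,n);U)$ rather than as a single dense subset of $L^2(0,\infty;U)$.
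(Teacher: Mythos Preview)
Your proof is correct and follows the same strategy as the paper's: evaluation functionals $u\mapsto\langle u(q),d\rangle_U$ for the two spaces of continuous paths, and pairing against compactly supported test functions for the two $L^2_{\text{loc}}$ spaces. Your care in taking $\mathcal{G}_n\subset C_c((0,n);U)$ dense in each $L^2(0,n;U)$, rather than a single countable $\mathfrak{G}\subset C_c(0,\infty;U)$ dense only in $L^2(0,\infty;U)$ as the paper does, is genuinely needed for the separation-of-points step (not merely for well-definedness of the integrals, since $C_c$ already ensures that): a countable $C_c$-set dense in $L^2(0,\infty;U)$ need not separate points of $L^2_{\text{loc}}$.
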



  \section{Matrix scalar product}
  \label{sec:matrix-sp}
  \begin{proposition}
    \label{prop:sp_matrices}
    Let $A, B, C \in \R^{n\times n}$ be symmetric matrices with $B, C \ge 0$. Then
    \begin{align*}
      AB : CA = (AB)_{i,j} (CA)_{i,j} \ge 0.
    \end{align*}

    \begin{proof}
      Write $B = D D^T$ and $C = E E^T$. Then
      \begin{align*}
        AB : CA &= ADD^T : EE^TA = |E^T A D|^2 \ge 0.
      \end{align*}
    \end{proof}
  \end{proposition}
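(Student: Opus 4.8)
The plan is to exploit the positive semidefiniteness of $B$ and $C$ by writing them as Gram matrices. Since $B,C\in\R^{n\times n}$ are symmetric with $B,C\ge 0$, the spectral theorem gives a factorization $B = DD^T$ and $C = EE^T$ for some $D,E\in\R^{n\times n}$ (e.g.\ $D = B^{1/2}$, $E = C^{1/2}$, or Cholesky factors). This is the only structural input; everything else is bookkeeping.

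Next I would pass to the trace form of the Frobenius product: for any $X,Y\in\R^{n\times n}$ one has $X : Y = \tr(X^T Y)$. Applying this with $X = AB = ADD^T$ and $Y = CA = EE^TA$, and using $A^T = A$,
\begin{align*}
  AB : CA &= \tr\bigl( (ADD^T)^T\, EE^TA \bigr) = \tr\bigl( DD^T A^T E E^T A \bigr) = \tr\bigl( DD^T A E E^T A \bigr).
\end{align*}
Then the cyclic invariance of the trace together with $A^T = A$ lets me regroup this as
\begin{align*}
  \tr\bigl( (E^T A D)(E^T A D)^T \bigr) = \| E^T A D \|_{F}^{2} \ge 0,
\end{align*}
which is exactly the asserted inequality $AB:CA \ge 0$.

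I do not expect a genuine obstacle here; the single point needing care is the handling of transposes, since $DD^T$ and $EE^T$ are not individually of the form ``matrix times its own transpose'' after multiplication by $A$, so one must combine them with the symmetric $A$ in the correct order before recognizing a square. As an alternative that avoids traces altogether, one can argue entrywise: substituting $B_{jk} = \sum_a D_{ja}D_{ka}$ and $C_{il} = \sum_b E_{ib}E_{lb}$ into $AB:CA = \sum_{i,j,k,l} A_{ij}B_{jk}C_{il}A_{lk}$ and rearranging the summations exhibits the expression as $\sum_{a,b}\bigl(\sum_{i,k} E_{ib}A_{ij}D_{ka}\,\cdots\bigr)^2$, i.e.\ a sum of squares; I would only fall back on this if the trace manipulation were for some reason awkward to typeset, but the trace argument above is shorter and is the one I would present.
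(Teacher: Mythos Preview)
Your proof is correct and follows essentially the same approach as the paper: factor $B=DD^T$, $C=EE^T$, and recognize $AB:CA$ as $|E^TAD|^2$. The paper simply states the one-line identity $ADD^T:EE^TA=|E^TAD|^2$ without spelling out the trace manipulation, whereas you justify it via $X:Y=\tr(X^TY)$ and cyclic invariance; the content is the same.
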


\bibliography{literature}
\end{document}